\newfont{\cyr}{wncyr10 scaled\magstep0}
\newcommand{\C}{\mathbb{C}}
\newcommand{\F}{\mathbb{F}}
\newcommand{\PP}{\mathbb{P}}
\newcommand{\Q}{\mathbb{Q}}
\newcommand{\R}{\mathbb{R}}
\newcommand{\Z}{\mathbb{Z}}
\DeclareMathOperator{\rad}{rad} 
\subjclass[2010]{primary 11D41, 
	secondary
		11D57; 
		11E76; 
		11N32; 
		11R16
		}
\keywords{Diophantine equations,
	local-global principle,
	cubic fields,
	primes represented by polynomials}
\date{\today}
\theoremstyle{plain}
 \newtheorem{theorem}{Theorem}[section]
 \crefname{theorem}{Theorem}{Theorems}
 \newtheorem{proposition}[theorem]{Proposition}
 \crefname{proposition}{Proposition}{Propositions}
 \newtheorem{lemma}[theorem]{Lemma}
 \crefname{lemma}{Lemma}{Lemmas}
 \newtheorem{corollary}[theorem]{Corollary}
 \crefname{corollary}{Corollary}{Corollaries}
 \newtheorem{conjecture}[theorem]{Conjecture}
 \crefname{conjecture}{Conjecture}{Conjectures}
 \crefname{question}{Question}{Questions}
 \crefname{problem}{Problem}{Problems}
\theoremstyle{definition} 
 \crefname{definition}{Definition}{Definitions}
 \crefname{example}{Example}{Examples}
 \newtheorem{remark}[theorem]{Remark}
 \crefname{remark}{Remark}{Remarks}
\title{Primes of the form $X^{3}+NY^{3}$ and a family of non-singular plane curves which violate the local-global principle
}
\author{Yoshinosuke Hirakawa}
\address[Yoshinosuke Hirakawa]{Department of Mathematics \\ Faculty of Science and Technology \\ Tokyo University of Science, 2641, Yamazaki, Noda, Chiba, Japan}
\email{hirakawa\_yoshinosuke@ma.noda.tus.ac.jp}
	\thanks{This research was supported in part by KAKENHI 18H05233.}
	\subjclass[2010]{primary 11E76; 
	secondary
		11D41; 
		11D57; 
		11N32; 
		11R16
		}
\keywords{Diophantine equations,
	local-global principle,
	cubic fields,
	primes represented by polynomials}
\date{\today}
\begin{document}


\maketitle


\begin{abstract}
Let $n$ be an integer such that $n = 5$ or $n \geq 7$.
In this article, 
we introduce a recipe for a certain infinite family of non-singular plane curves of degree $n$
which violate the local-global principle.
Moreover,
each family contains infinitely many members which are not geometrically isomorphic to each other.
Our construction is based on two arithmetic objects;
that is, prime numbers of the form $X^{3}+NY^{3}$ due to Heath-Brown and Moroz
and the Fermat type equation of the form $x^{3}+Ny^{3} = Lz^{n}$,
where $N$ and $L$ are suitably chosen integers.
In this sense, our construction is an extension of the family of odd degree $n$
which was previously found by Shimizu and the author.
The previous construction works only if the given degree $n$ has a prime divisor $p$
for which the pure cubic fields $\Q(p^{1/3})$ or $\Q((2p)^{1/3})$
satisfy a certain indivisibility conjecture of Ankeny-Artin-Chowla-Mordell type.
In this time, we focus on the complementary cases,
namely the cases of even degrees and exceptional odd degrees.
Consequently, our recipe works well as a whole.
This means that we can unconditionally produce infinitely many explicit non-singular plane curves of every degree $n = 5$ or $n \geq 7$
which violate the local-global principle.
This gives a conclusion of the classical story of searching explicit ternary forms violating the local-global principle,
which was initiated by Selmer (1951) and extended by Fujiwara (1972) and others.
\end{abstract}



\section{Introduction}

In the theory of Diophantine equations,
the local-global principle for quadratic forms established by Minkowski and Hasse
is one of the major culminations (cf.\ \cite[Theorem 8, Ch. IV]{Serre_Arithmetic}).
It states that
every quadratic hypersurface of the projective space $\PP^{n}$ $(n \geq 1)$ defined over $\Q$
has a rational point over $\Q$
if (and only if) it has a rational point over $\R$ and $\Q_{p}$ for every prime number $p$.
Here and after,
$\Q, \R, \Q_{p}$ denote as usual the field of rational, real, and $p$-adic numbers respectively.

In contrast to the quadratic case,
there exist many homogeneous forms of higher degrees which violate the local-global principle.
For example, Selmer \cite{Selmer} found that a non-singular plane cubic curve defined by
\begin{equation} \label{Selmer}
	3X^{3}+4Y^{3} = 5Z^{3}, \quad \text{or equivalently} \quad X^{3}+6Y^{3} = 10Z^{3}
\end{equation}
has rational points $\R$ and $\Q_{p}$ for every prime number $p$
but not over $\Q$.
In this situation,
we say that (the curve defined by) \cref{Selmer} violates the local-global principle
or it is a counterexample to the local-global principle.
From \cref{Selmer},
we can easily construct reducible (especially singular) counterexamples of higher degrees.
Note also that in the case of weighted homogeneous forms,
Lind \cite{Lind} and Reichardt \cite{Reichardt} independently found the following first counterexample
\[
	X^{4}-17Y^{4} = 2Z^{2}.
\]
For more information on this topic,
we refer the reader to a nice introductory article \cite{Aitken-Lemmermeyer} by Aitken and Lemmermeyer.

More recently,
some people have studied infinite families of non-singular plane curves
which violate the local-global principle.
The existence of such a family of cubic curves was proven by Colliot-Th\'el\`ene and Poonen in \cite{CTPoonen},
and an explicit example was constructed first by Poonen in \cite{Poonen_genus-one} as follows.

\begin{theorem} [\cite{Poonen_genus-one}] \label{Poonen_genus_one}
For any $t \in \Q$, the equation
\[
	5X^{3}+9Y^{3}+10Z^{3}+12 \left( \frac{t^{2}+82}{t^{2}+22} \right)^{3} (X+Y+Z)^{3} = 0
\]
defines a non-singular plane cubic curve $C = C_{t}$ defined over $\Q$ which violates the local-global principle.
Moreover, there exists a set of $t \in \Q$
which gives infinitely many geometrically non-isomorphic classes of such curves.
\end{theorem}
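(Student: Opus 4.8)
The plan is to realize each $C_{t}$ as a hyperplane section of a twist of the Cassels--Guy cubic surface, to read off everywhere-local solubility and the absence of $\Q$-points from a uniform version of the Cassels--Guy analysis of that surface, and to separate geometric isomorphism classes by the $j$-invariant.

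First I would rewrite the equation. Set $\mu=\mu_{t}:=(t^{2}+82)/(t^{2}+22)\in\Q^{\times}$. Via $[X:Y:Z]\mapsto[X:Y:Z:X+Y+Z]$ the plane curve $C_{t}\subset\PP^{2}$ maps isomorphically onto the hyperplane section $\{w=x+y+z\}$ of the diagonal cubic surface $S_{\mu}\colon 5x^{3}+9y^{3}+10z^{3}+12\mu w^{3}=0$ in $\PP^{3}$. For $\mu=1$, $S_{1}$ is the Cassels--Guy surface, which has a point in every completion of $\Q$ but none in $\Q$; for general $\mu$, $S_{\mu}$ is the twist of $S_{1}$ by $w\mapsto\mu^{1/3}w$, hence $\Q(\mu^{1/3})$-isomorphic to $S_{1}$. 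As a $\Q$-point of $C_{t}$ is a $\Q$-point of $S_{\mu}$, it suffices to prove, for every $t\in\Q$, that (i) $C_{t}(\R)\neq\emptyset$ and $C_{t}(\Q_{p})\neq\emptyset$ for all primes $p$, and (ii) $S_{\mu_{t}}(\Q)=\emptyset$.

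For (i): computing the discriminant of $5X^{3}+9Y^{3}+10Z^{3}+12\mu(X+Y+Z)^{3}$ as a polynomial in $\mu$ and checking that it is nonzero at every value $\mu=\mu_{t}$, $t\in\Q$, shows that $C_{t}$ is a smooth genus-one curve for all $t$. Over $\R$ a smooth plane cubic always has a point (any real plane curve of odd degree does). At a prime $p\geq 7$ where $C_{t}$ has good reduction, the Hasse--Weil bound gives $\geq p+1-2\sqrt{p}\geq 1$ point on the reduction, which lifts by Hensel; if instead $C_{t}$ has multiplicative reduction at $p$ one lifts a smooth point of the nodal reduction. The primes that remain --- essentially $2$, $3$, $5$ (plus, for a given $t$, the few primes of additive reduction) --- are treated by an explicit $p$-adic computation, and here the peculiar shape of $\mu_{t}$ is decisive: for instance $82\equiv 22\pmod 5$ and $-2$ is a non-square mod $5$, so $t^{2}+82$ and $t^{2}+22$ are $5$-adic units in the same class and $\mu_{t}\equiv 1\pmod 5$ for \emph{every} $t\in\Q_{5}$, which one converts into an explicit $\Q_{5}$-point; similar congruences mod $3$, and a short $2$-adic case analysis, settle $p=3,2$. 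Thus $C_{t}$ is everywhere locally soluble for all $t\in\Q$.

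Part (ii) is the core of the argument, and the step I expect to be hardest. Cassels and Guy proved $S_{1}(\Q)=\emptyset$ by a descent/congruence argument --- conceptually a Brauer--Manin obstruction on the diagonal cubic surface afforded by a cyclic cubic algebra. I would rerun that argument for the twist $S_{\mu_{t}}$. Clearing denominators turns a hypothetical $\Q$-point of $C_{t}$ into an integral relation of the form $(a^{2}+22b^{2})\bigl(5X^{3}+9Y^{3}+10Z^{3}+12(X+Y+Z)^{3}\bigr)+720\,b^{2}(X+Y+Z)^{3}=0$ with $t=a/b$; one then derives a contradiction by examining this relation modulo suitable powers of $2,3,5$ (and of $11,41$, the odd primes dividing $22,82$) with the help of cubic-residue information. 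The essential point is that the congruence conditions on the coefficients needed to close the argument are precisely the ones guaranteed by the form of $t^{2}+22$ and $t^{2}+82$ --- the residues pinned down in step (i) --- so that everything works uniformly in $t$. In conceptual terms: each twist $S_{\mu_{t}}$ of the Cassels--Guy surface still carries a Brauer--Manin obstruction to $\Q$-points, the obstructing algebra varying with $t$ but remaining obstructing for every $t\in\Q$. Hence $S_{\mu_{t}}(\Q)=\emptyset$, so $C_{t}(\Q)=\emptyset$, and with (i) the curve $C_{t}$ violates the local--global principle for all $t\in\Q$. The delicate part is exactly this uniformity: verifying that no single $t\in\Q$ slips through either the local-solubility analysis of (i) or the global obstruction of (ii) --- which is why one chooses $t^{2}+22$ and $t^{2}+82$ rather than linear polynomials. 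Finally, for the last assertion: two smooth genus-one curves over $\overline{\Q}$ are geometrically isomorphic iff their Jacobians have equal $j$-invariant; computing $j(C_{t})$ as a rational function of $\mu_{t}$, hence of $t$, and checking it is non-constant (it suffices to exhibit two $t$ with distinct $j$), one finds that $t\mapsto j(C_{t})$ takes infinitely many values on $\Q$, so picking one parameter in each of infinitely many fibres yields the required set of $t$.
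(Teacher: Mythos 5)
First, a point of reference: the paper does not prove this statement at all --- it is quoted from Poonen's article as a known result --- so there is no internal proof to compare yours against; your proposal has to stand on its own. Judged that way, it has a genuine gap at its central step. You reduce global non-solubility to the claim that the twisted diagonal cubic surface $S_{\mu_t}\colon 5x^3+9y^3+10z^3+12\mu_t w^3=0$ has no rational points for every $t\in\Q$. That is a drastic and almost certainly false strengthening. The Cassels--Guy/Colliot-Th\'el\`ene--Kanevsky--Sansuc obstruction on a diagonal cubic surface depends on the classes of the coefficients in $\Q^{\times}/\Q^{\times3}$ and on associated cubic-residue conditions; replacing $12$ by $12\mu_t$ with $\mu_t=41/11,\,83/23,\dots$ destroys those conditions for general $t$, so one expects $\Br(S_{\mu_t})/\Br(\Q)=0$ and, since $S_{\mu_t}$ is everywhere locally soluble (it contains $C_t$), that $S_{\mu_t}(\Q)\neq\emptyset$ for most $t$ --- just not on the hyperplane $w=x+y+z$. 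The observation that $S_{\mu_t}$ becomes isomorphic to the Cassels--Guy surface over $\Q(\mu_t^{1/3})$ transfers nothing about $\Q$-points. The argument has to exploit the constraint $w=x+y+z$: a rational point of $C_t$ with $t=a/b$ yields $(a^2+22b^2)\,F(x,y,z,w)=-720\,b^2w^3$ for the \emph{fixed} Cassels--Guy form $F$, and what must be shown is that the fixed Brauer class obstructs points at which $F$ takes a value of this controlled shape. Your own ``integral relation'' points in exactly this direction, but you then abandon it in favour of the surface statement, which your proposed method (rerunning Cassels--Guy on each twist) cannot establish.

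Second, even granting the framing, both computations that constitute the theorem are asserted rather than performed. For local solubility the problematic primes do not form a fixed finite set: as $t$ varies, every prime $p$ with $-22$ a square modulo $p$ can divide the numerator of $t^2+22$, whereupon $v_p(\mu_t)<0$, the naive reduction of $C_t$ degenerates to a triple line, and all $\Q_p$-points are forced into the locus $X+Y+Z\equiv 0 \bmod p$; producing a point there uniformly in such $p$ requires a structural argument tied to the specific constants, not ``an explicit $p$-adic computation'', and you do not supply it. Likewise the ``contradiction modulo suitable powers of $2,3,5,11,41$ with cubic-residue information'' is the entire content of the global step. The $j$-invariant argument for the last assertion is fine, but as it stands the proposal is an outline whose key lemma has been replaced by a false one and whose two essential verifications are missing.
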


In contrast to the cubic case,
it seems that there is only a little number of explicit non-singular plane curves of a higher odd degree which violates the local-global principle.
After the above examples by Lind, Reichardt, and Selmer,
Fujiwara \cite{Fujiwara} found that a non-singular plane quintic curve defined by
\begin{equation} \label{Fujiwara_example}
	(X^{3}+5Z^{3})(X^{2}+XY+Y^{2}) = 17Z^{5}
\end{equation}
violates the local-global principle.
His idea is to reduce the proof of the unsolubility over $\Q$
to the determination of primitive solutions of the Fermat type equation $x^{3}+5y^{3} = 17z^{5}$.
Here and after, a triple of integers $(x, y, z) \in \Z^{\oplus 3}$ is called primitive if $\gcd(x, y, z) = 1$.
After Fujiwara,
Cohen \cite[Corollary 6.4.11]{Cohen} gave several counterexamples of the form $x^{p}+by^{p}+cz^{p} = 0$
of degree $p = 3, 5, 7, 11$ with $b, c \in \mathbb{Z}$,
but his construction is still so restricted.
More recently,
Shimizu and the author conditionally succeeded in generalizing Fujiwara's idea in general higher odd degrees.
Our construction is based on the generation of prime numbers by cubic polynomials established by Heath-Brown and Moroz \cite{HBM2002,HBM2004}.

\begin{theorem} [{\cite[Theorem 1]{HBM2004}}] \label{HBM}
Let $f_{0} \in \mathbb{Z}[X, Y]$ be an irreducible binary cubic form,
$\rho \in \mathbb{Z}$,
$(\gamma_{1}, \gamma_{2}) \in \mathbb{Z}^{\oplus 2}$,
and $\gamma_{0}$ be
the greatest common divisor of the coefficients of
$f_{0}(\rho x+\gamma_{1}, \rho y+\gamma_{2})$.
Set $f(x, y) := \gamma_{0}^{-1}f_{0}(\rho x+\gamma_{1}, \rho y+\gamma_{2})$.
Suppose that $\gcd(f(\mathbb{Z}^{\oplus 2})) = 1$.
Then, the set $f(\mathbb{Z}^{\oplus 2})$
contains infinitely many prime numbers.
\end{theorem}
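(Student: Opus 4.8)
The plan is to reproduce, in the general binary-cubic setting, the method that Heath-Brown introduced for primes of the form $x^{3}+2y^{3}$; indeed \cref{HBM} is the extension of that circle of ideas by Heath-Brown and Moroz to arbitrary irreducible binary cubic forms. First I would record what the hypotheses buy us. Writing $f_{0}(X,Y) = a_{0}\prod_{i=1}^{3}(X-\theta^{(i)}Y)$, the form $f_{0}$ is, up to the fixed nonzero rational factor $a_{0}$, the norm form $N_{K/\Q}$ of the cubic field $K = \Q(\theta)$ restricted to the rank-two lattice $\Z+\Z\theta$; the affine substitution $(x,y)\mapsto(\rho x+\gamma_{1},\rho y+\gamma_{2})$ and the division by $\gamma_{0}$ merely replace this by a scaled translated sublattice, and the hypothesis $\gcd(f(\Z^{\oplus 2})) = 1$ is exactly the statement that there is no congruence obstruction. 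A prime value $f(x,y) = p$ with $p$ large then forces the ideal of $\mathcal{O}_{K}$ generated by the associated lattice vector to have, away from a fixed finite set of bad primes, its $p$-part equal to a degree-one prime $\mathfrak{p}$ above $p$. Thus the theorem amounts to showing that, as $(x,y)$ runs over an expanding homothetic copy of a fixed region $R$, the values $f(x,y)$ hit infinitely many primes; one in fact aims for an asymptotic formula for $\sum_{(x,y)}\Lambda(f(x,y))$.

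To extract primes I would run the asymptotic sieve: decompose $\Lambda(f(x,y))$ via Vaughan's (or Heath-Brown's) identity into \emph{Type I} sums $\sum_{d}\#\{(x,y)\in R: d\mid f(x,y)\}$ with divisor-type weights, and \emph{Type II} bilinear sums $\sum_{m}\sum_{n}\alpha_{m}\beta_{n}\#\{(x,y): mn=f(x,y)\}$, all subject to the lattice condition --- where the fact that $\Z+\Z\theta$ is in general a proper sublattice of $\mathcal{O}_{K}$, and that the class and unit groups of $K$ intervene, adds a further layer of bookkeeping. The Type I piece is essentially geometry of numbers: for each modulus $d$ the congruence $d\mid f(x,y)$ confines $(x,y)$ to a union of sublattices indexed by the roots of $f$ modulo $d$, and counting lattice points in $R$ with a power-saving error and then summing the densities over $d$ up to the level of distribution yields a main term that assembles into a positive product of local densities --- positive precisely because of the coprimality hypothesis. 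This is laborious but standard, the error terms being controlled by elementary bounds for the relevant character sums.

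The Type II estimate is the heart of the argument and the main obstacle. A composite value $f(x,y) = mn$ corresponds to a factorisation of the associated ideal of $\mathcal{O}_{K}$, and one must prove that the values of $f$ do not correlate with this multiplicative structure, i.e. genuine cancellation in the bilinear form over essentially the full relevant ranges of $m$ and $n$. Opening the lattice-membership condition with additive characters and applying Poisson summation in the two variables $x,y$ turns the count into complete and incomplete exponential sums over $\mathcal{O}_{K}$ with moduli built from $m$ and $n$; bounding these nontrivially requires the detailed arithmetic of the cubic ring --- the shape of the dual lattices and the way the modulus factors as $m,n$ vary --- together with square-root cancellation for the associated complete exponential sums, which ultimately rests on Deligne's bounds over the relevant varieties. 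The real difficulty is that the ``modulus'' here is genuinely two-dimensional, so the achievable level of distribution is extremely tight and no step can afford to be lossy: one must carefully balance the ranges of $m$ and $n$ against the dual ranges produced by Poisson summation, and peel off separately the degenerate exponential sums for which square-root cancellation fails. Once the Type II bilinear sums are shown to be negligible against the main term and the Type I error is controlled, the asymptotic sieve delivers an asymptotic formula for $\sum_{(x,y)}\Lambda(f(x,y))$ with positive leading constant, giving in particular infinitely many primes in $f(\Z^{\oplus 2})$.
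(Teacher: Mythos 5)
This statement is not proved in the paper at all: it is Theorem 1 of Heath-Brown and Moroz, quoted as a black box from \cite{HBM2004}, so there is no internal argument to compare yours against. Judged on its own terms, your proposal correctly identifies the provenance and the overall architecture of the actual proof --- reduction to ideals of the cubic field $K=\Q(\theta)$, an asymptotic-sieve decomposition into Type I congruence sums (handled by lattice-point counting, with the hypothesis $\gcd(f(\Z^{\oplus 2}))=1$ guaranteeing a positive product of local densities) and Type II bilinear sums --- and you are right that the bilinear estimate is where all the difficulty lives. But that is precisely the step you do not carry out: everything after ``the Type II estimate is the heart of the argument'' is a description of a difficulty rather than an argument, so as a proof the proposal has a gap the size of the entire Heath-Brown--Moroz papers.

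Moreover, the mechanism you propose for the Type II sums --- Poisson summation in $x,y$ followed by Deligne-type square-root cancellation for the resulting complete exponential sums --- is not how Heath-Brown and Moroz proceed, and it is far from clear it could be pushed to the required (extremely tight) level of distribution. Their bilinear estimate is obtained by translating the factorisation $f(x,y)=mn$ into a factorisation of ideals of $\mathcal{O}_{K}$, applying Cauchy--Schwarz/dispersion, and then counting quadruples of ideals with equal products and constrained norms; the saving comes from the sparsity and equidistribution of ideals of the cubic field, together with a fundamental-lemma sieve to remove small prime factors, not from algebraic-geometric bounds for exponential sums. If the intention is to cite the result (as the paper does), that is legitimate; if the intention is to prove it, the Type II section must be an actual argument, and the one you sketch is not the one known to succeed.
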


By using the above result,
Shimizu and the author proved the following theorem.

\begin{theorem} [{cf.\ \cite[Theorem 1.1]{Hirakawa-Shimizu_arXiv} and \cref{outline}}] \label{Hirakawa-Shimizu}
Let $p$ be an odd prime number
and $P = 2p \ \text{or} \ p$ so that $P \not\equiv \pm 1 \bmod{9}$.
Let $\epsilon = \alpha + \beta P^{1/3} + \gamma P^{2/3} \in \R_{> 1}$
be the fundamental unit of $\Q(P^{1/3})$
with $\alpha, \beta, \gamma \in \mathbb{Z}$.
Set
\[
	\iota =
	\begin{cases}
	1 & \text{if $\beta \not\equiv 0 \bmod{p}$ or $\beta \equiv \gamma \equiv 0 \bmod{p}$} \\
	2 & \text{if $\beta \equiv 0 \bmod{p}$ and $\gamma \not\equiv 0 \bmod{p}$}
	\end{cases}.
\]
Let $n \geq 5$ be an odd integer divisible by $p^{\iota}$.
Then, there exist
infinitely many $(n-3)/2$-tuples of pairs of integers $(b_{j}, c_{j})$ $(1 \leq j \leq (n-3)/2)$
satisfying the following condition:

There exist infinitely many integers $L$ such that
the equation
\begin{equation} \label{equation_Hirakawa-Shimizu}
	(X^{3}+P^{\iota}Y^{3}) \prod_{j = 1}^{\frac{n-3}{2}} (b_{j}^{2}X^{2}+b_{j}c_{j}XY+c_{j}^{2}Y^{2}) = LZ^{n}
\end{equation}
define non-singular plane curves of degree $n$ which violate the local-global principle.

Moreover, for each $n$,
there exists a set of such $(n-3)/2$-tuples $((b_{j}, c_{j}))_{1 \leq j \leq (n-3)/2}$
which gives infinitely many geometrically non-isomorphic classes of such curves of degree $n$.
\end{theorem}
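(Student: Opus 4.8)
The plan is to extend the strategy behind Fujiwara's example \eqref{Fujiwara_example}: exhibit the left-hand side of \eqref{equation_Hirakawa-Shimizu} as a product of norm forms, so that a rational point on the curve forces an integral solution of a Fermat-type equation $x^{3}+P^{\iota}y^{3}=Ldz^{n}$, and then rule out every such solution by a descent in the pure cubic field $K=\Q(P^{1/3})$ that reads off the $p$-adic behaviour of the fundamental unit $\epsilon$; this last ingredient is the source of the integer $\iota$ and of the divisibility $p^{\iota}\mid n$. As a preliminary geometric step I would record that for all tuples $((b_{j},c_{j}))_{j}$ in a Zariski-dense set — those for which the forms $q_{j}(X,Y):=b_{j}^{2}X^{2}+b_{j}c_{j}XY+c_{j}^{2}Y^{2}$ are pairwise non-proportional, are each coprime to $X^{3}+P^{\iota}Y^{3}$, and have $b_{j}c_{j}\neq 0$ — and all $L$ coprime to a fixed bad modulus, equation \eqref{equation_Hirakawa-Shimizu} defines a non-singular plane curve $C$ of degree $n$ over $\Q$: by the Jacobian criterion only the finitely many points over the line $Z=0$ need to be examined, and genericity of $((b_{j},c_{j}))_{j}$ disposes of them.

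Next I would pin down the positivity and congruence conditions on $L$ that guarantee $C(\Q_{v})\neq\emptyset$ for every place $v$. A real point always exists, since for $Y=1$ the left-hand side of \eqref{equation_Hirakawa-Shimizu} is a surjective function of $X$ onto $\R$. For $\ell$ outside a fixed finite set, smoothness of the reduction and Hensel's lemma give $\ell$-adic points; for the remaining $\ell$ — those dividing $6nP\Disc(K)$, together with $\ell=L$ — one writes explicit local points, the case $\ell=L$ working precisely because we will take $L$ of the form $x_{0}^{3}+P^{\iota}y_{0}^{3}$, so that $-P^{\iota}$ is a cube modulo $L$ and $X^{3}+P^{\iota}Y^{3}\equiv 0$ is solvable there. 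Thus $C$ is everywhere locally soluble once $L$ is a positive prime lying in a prescribed union of residue classes and representable by $X^{3}+P^{\iota}Y^{3}$; infinitely many such $L$ exist by \cref{HBM}, applied to $f_{0}=X^{3}+P^{\iota}Y^{3}$ after an affine substitution $X\mapsto\rho X+\gamma_{1}$, $Y\mapsto\rho Y+\gamma_{2}$ chosen simultaneously to force the residue conditions and to arrange $\gcd(f(\Z^{\oplus 2}))=1$.

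Now suppose $C(\Q)\neq\emptyset$ and fix a primitive integral representative $(X,Y,Z)$. Since $p\not\equiv\pm1\bmod 9$ forces $P^{\iota}$ to be a non-cube and $[\Q(P^{1/3}):\Q]=3$, the cubic factor, and each $q_{j}$, vanishes over $\Q$ only at $X=Y=0$, so $Z\neq0$; as $L$ is prime, primitivity forces $\gcd(X,Y)=1$. Writing $q_{j}(X,Y)=N_{\Q(\zeta_{3})/\Q}(b_{j}X-c_{j}\zeta_{3}Y)$ and imposing $L\equiv 2\bmod 3$ keeps $L$ off every quadratic factor at a primitive point, since $L$ is then inert in $\Q(\zeta_{3})$. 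Comparing $\ell$-adic valuations on the two sides of \eqref{equation_Hirakawa-Shimizu} — whose factors are pairwise coprime up to the fixed divisors of their resultants — then shows that, modulo $n$-th powers, $X^{3}+P^{\iota}Y^{3}$ equals $\pm L$ times one of finitely many constants $d$. Hence a rational point on $C$ yields, for some such $d$ and some sign, a primitive integral solution $(x,y,z)$ of $x^{3}+P^{\iota}y^{3}=\pm Ldz^{n}$; and one may assume it is not the ``trivial'' one with $d=1$ and $|z|=1$, because such a solution would satisfy both $x^{3}+P^{\iota}y^{3}=\pm L$ and $\prod_{j}q_{j}(x,y)=1$, and for a fixed admissible tuple the values $x^{3}+P^{\iota}y^{3}$ taken over the finitely many integer solutions of $\prod_{j}q_{j}=1$ form a finite set of $L$'s, which we simply exclude. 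It therefore remains to show each of these finitely many twisted Fermat-type equations has no non-trivial primitive solution.

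This descent is the crux. In $\mathcal O_{K}$ one factors the ideal $(x+\theta y)$, where $\theta^{3}=P^{\iota}$: using $\gcd(x,y)=1$, no prime of residue degree $\geq 2$ divides it and each totally split prime divides it through at most one of its factors, so that away from the degree-one prime $\mathfrak p_{L}=(x_{0}+\theta y_{0})$ above $L$ and the finitely many ramified primes, $(x+\theta y)$ equals an $n$-th power of an ideal; tracking valuations at $L$ then gives $(x+\theta y)=\mathfrak p_{L}\cdot\mathfrak c^{n}\cdot(\text{ramified part})$. Controlling the resulting class-group and ramified contributions — this, together with the congruence conditions on $L$ and the relation $z=\pm N_{K/\Q}(\delta)$ read off from norms (using that $n$ is odd), is arranged to leave only a fixed bounded factor $r_{0}$ — one reaches an identity $x+\theta y=\pm\, r_{0}\,\epsilon^{k}\,\delta^{n}\,(x_{0}+\theta y_{0})$ with $k\in\Z$ and $\delta\in\mathcal O_{K}$. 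Reducing this modulo $p\mathcal O_{K}=\mathfrak p^{3}$ and invoking $p^{\iota}\mid n$, the factor $\delta^{n}$ collapses to a rational scalar, so the only non-scalar part is $\epsilon^{k}=(\alpha+\beta\theta+\gamma\theta^{2})^{k}$ computed in the truncated ring $\mathcal O_{K}/\mathfrak p^{3}$; matching the vanishing of the $\theta^{2}$-coefficient on the left then pins $k$, and hence $(x,y)\bmod p$, to a short explicit list. This is exactly where the hypothesis on $\iota$ bites: the relevant coefficient on the right involves $\beta$ when $\iota=1$ and $\gamma$ when $\iota=2$, which the hypothesis makes a $p$-adic unit, while the degenerate case $\beta\equiv\gamma\equiv 0\bmod p$ is disposed of separately via $\alpha^{3}\equiv\pm1\bmod p$ (which confines $\alpha\bmod p$ to an element of small order). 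The last residue condition imposed on $L$ through \cref{HBM} is then chosen incompatible with every entry on that list, a contradiction.

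I expect the main obstacle to be precisely this orchestration: choosing one finite set of conditions on $L$ under which everywhere-local solubility, the reduction of the third paragraph, the triviality of all auxiliary class-group and ramified contributions, and the final incompatibility hold at once, and then checking — via the affine substitution in \cref{HBM} — that infinitely many primes $L$ still satisfy all of them. For the geometric non-isomorphism statement I would then attach to each curve a $\mathrm{PGL}_{2}$-invariant of the degree-$n$ divisor cut on the line $Z=0$ — intrinsic because this divisor is the fixed locus of the order-$n$ automorphism $[X:Y:Z]\mapsto[X:Y:\zeta_{n}Z]$ of $C$ and $n$ collinear points determine their line — and observe that, as $((b_{j},c_{j}))_{j}$ ranges over the admissible set, the cross-ratios among those $n$ points take infinitely many values, yielding infinitely many geometrically non-isomorphic curves of degree $n$, each still violating the local-global principle by the foregoing.
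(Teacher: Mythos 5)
Your overall strategy is the paper's own: norm-form factorization in the spirit of Fujiwara, reduction of a hypothetical rational point to a Fermat-type equation $x^{3}+P^{\iota}y^{3}=Lz^{n}$, a descent in $\mathcal{O}_{K}$ driven by the fundamental unit modulo $p$, and \cref{HBM} to generate the auxiliary primes (the theorem is recalled from \cite{Hirakawa-Shimizu_arXiv}; the framework appears here as \cref{recipe_odd}, \cref{Fujiwara_Fermat_unify} and \cref{outline}). However, two of your steps would fail as written. The first is local solubility: ``for $\ell$ outside a fixed finite set, smoothness of the reduction and Hensel's lemma give $\ell$-adic points'' is not true for small primes of good reduction, since a smooth plane curve of degree $n$ over $\F_{\ell}$ has genus $g=(n-1)(n-2)/2$ and Hasse--Weil guarantees an $\F_{\ell}$-point only for $\ell\gtrsim 4g^{2}$; your exceptional list ($\ell\mid 6nP\Disc(K)$ and $\ell=L$) does not contain the intermediate primes. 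The paper's argument (\cref{local_even} and its odd-degree counterpart) avoids genus considerations entirely: the splitting field of the quintic subproduct $(X^{3}+P^{\iota}Y^{3})(b_{1}^{2}X^{2}+b_{1}c_{1}XY+c_{1}^{2}Y^{2})$ is an $S_{3}$-extension unramified outside $3P$, and every cyclic subgroup of $S_{3}$ fixes one of the five roots, so this form has a linear factor over $\Q_{v}$ for every $v\nmid 3P$, giving a point with $Z=0$ uniformly at all such places.

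The second gap is in the endgame of the descent. You permit auxiliary constants $d$ (from resultants of the factors) and an uncontrolled ``bounded factor $r_{0}$'' in the identity $x+\theta y=\pm r_{0}\epsilon^{k}\delta^{n}(x_{0}+\theta y_{0})$; but the contradiction is a coefficient comparison in $\F_{p}[\pi]/(\pi^{3})$, and a factor $r_{0}$ generating split primes dividing $d$ contributes its own $\pi$- and $\pi^{2}$-terms that can absorb the obstruction. The paper forecloses this by congruence conditions ($P^{\iota}b_{j}^{3}+c_{j}^{3}$ a prime $\equiv 2\bmod 3$, every prime divisor of $L$ $\equiv 2\bmod 3$ with $v_{l}(L)<n$, gcd conditions) that force exact coprimality of the factors, so $d=1$ and $r_{0}$ is a unit times the known generator of $\mathfrak{p}_{l}$. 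Moreover, in the main case $\iota=1$, $\beta\not\equiv 0\bmod p$, the vanishing of the $\pi^{2}$-coefficient is a quadratic congruence in $k$ with leading coefficient a unit multiple of $\beta^{2}$; the contradiction is not obtained by ``pinning $k$ to a list,'' but by choosing $l=N_{K/\Q}(a+b\pi+c\pi^{2})$ and $m$ so that its discriminant --- the quantity $(\beta/2\alpha-\gamma/\beta)^{2}-2cm/a$ of \cref{outline} --- is a quadratic non-residue modulo $p$, and producing infinitely many such $l$ requires a quantitative equidistribution input (effective P\'olya--Vinogradov) on top of \cref{HBM}. Finally, for $\iota=2$ the contradiction comes from the $\pi$-coefficient $mab$ of $(a+b\pi+c\pi^{2})^{m}$ being a unit, not from $\gamma$; the hypothesis $\gamma\not\equiv 0\bmod p$ is rather the reason the $\iota=1$ route fails there. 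Your cross-ratio argument for geometric non-isomorphism differs from the paper's appeal to Schwarz's finiteness theorem, but both are viable.
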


Here, note that for $p = 3$,
we can take $P = 6$ so that $\beta \equiv \gamma \equiv 0 \bmod{3}$, hence $\iota = 1$ (cf.\ \cref{outline}).
On the other hand,
Shimizu and the author \cite{Hirakawa-Shimizu_arXiv} formulated the following conjecture,
which implies again $\iota = 1$ for all $p \neq 3$.

\begin{conjecture} \label{AACM_cubic}
Let $p \neq 3$ be a prime number,
$P = p \ \text{or} \ 2p$,
and $\epsilon = \alpha+\beta p^{1/3}+\gamma p^{2/3} \in \R_{> 1}$
be the fundamental unit of $\Q(P^{1/3})$
with $\alpha, \beta, \gamma \in (1/3)\mathbb{Z}$.
Then, we have $\beta \not\equiv 0 \bmod{p}$.
\end{conjecture}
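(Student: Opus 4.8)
The plan is to recast \cref{AACM_cubic} as a statement about the $p$-adic position of the fundamental unit, thereby placing it inside the Ankeny--Artin--Chowla--Mordell circle of problems. Write $K = \Q(P^{1/3})$ and $\theta = P^{1/3}$, so that $\epsilon = \alpha + \beta\theta + \gamma\theta^{2}$ with $\alpha, \beta, \gamma \in \tfrac{1}{3}\Z$ (a denominator invisible $p$-adically since $p \neq 3$). First I would record the local picture at $p$, taking $p$ odd --- the case $p = 2$ being an immediate check (the fundamental unit of $\Q(\sqrt[3]{2})$ is $1 + \sqrt[3]{2} + \sqrt[3]{4}$, for which $\beta = 1$). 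Since $p \neq 3$ and $v_{p}(P) = 1$, the polynomial $X^{3} - P$ is Eisenstein at $p$, so $p$ is totally and tamely ramified in $K$; writing $\mathfrak{p}$ for the prime above it, $\mathfrak{p}^{3} = (p)$, $v_{\mathfrak{p}}(\theta) = 1$, and the different of $K_{\mathfrak{p}}/\Q_{p}$ is $\mathfrak{p}^{2}$. As $\epsilon$ is a unit, $v_{\mathfrak{p}}(\epsilon) = 0$, which already forces $p \nmid \alpha$; and comparing $v_{\mathfrak{p}}(\beta\theta) = 3 v_{p}(\beta) + 1$ with $v_{\mathfrak{p}}(\gamma\theta^{2}) \geq 2$ one gets at once
\[
	p \mid \beta \iff \epsilon \equiv \alpha \pmod{\mathfrak{p}^{2}} \iff \epsilon/\alpha \in 1 + \mathfrak{p}^{2},
\]
i.e.\ $\epsilon/\alpha$ has trivial image in the graded piece $(1 + \mathfrak{p})/(1 + \mathfrak{p}^{2}) \cong \F_{p}$ of the unit filtration. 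Thus \cref{AACM_cubic} asserts precisely that the fundamental unit of $K$ is not $p$-adically congruent to a rational integer to second order --- the pure-cubic analogue of the classical statement ``$p \nmid u$'' for the fundamental unit $(t + u\sqrt{p})/2$ of $\Q(\sqrt{p})$.

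The next step is to pass to the $p$-adic regulator. For $p \geq 5$ one has $v_{\mathfrak{p}}(\log_{p}\epsilon) = 1$ exactly when $p \nmid \beta$: in
\[
	\log_{p}\epsilon \;=\; \log_{p}\alpha \;+\; \log_{p}\!\bigl(1 + (\beta\theta + \gamma\theta^{2})/\alpha\bigr),
\]
the $\alpha$-contribution lies in $p\Z_{p} \subseteq \mathfrak{p}^{3}$, while $\log_{p}(1 + u)$ has the same leading term as $u = (\beta\theta + \gamma\theta^{2})/\alpha$, namely $(\beta/\alpha)\theta$ (of $\mathfrak{p}$-valuation $1$) when $p \nmid \beta$, and an element of $\mathfrak{p}^{2}$ otherwise. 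Hence \cref{AACM_cubic} is equivalent to $v_{\mathfrak{p}}(\log_{p}\epsilon) = 1$, i.e.\ to the $p$-adic regulator of $K$ being as $p$-integrally small as it can possibly be --- an Ankeny--Artin--Chowla--Mordell type statement. As with its classical prototype, one expects --- via a $p$-adic class number formula in the style of Colmez and the factorisation $\zeta_{K}(s) = \zeta_{\Q}(s) L(s, \rho)$ of the Dedekind zeta function through the two-dimensional Artin representation $\rho$ of the $S_{3}$-closure $\Q(P^{1/3}, \zeta_{3})$ (hence through a Hecke character of $\Q(\zeta_{3})$) --- a closed congruence pinning $\beta \bmod p$ to the $p$-divisibility of a $p$-adic $L$-value; making this explicit and then proving the relevant non-divisibility is the crux.

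The hard part is precisely the hard part of the classical problem: there is no known unconditional device that forces such a $p$-adic $L$-value, or the $p$-adic regulator, to be a $p$-adic unit --- indeed even its non-vanishing is Leopoldt's conjecture for the non-abelian field $K$ --- so I would not expect a full proof. What is realistically attainable, and what I would actually carry out, is: (i) a verification of \cref{AACM_cubic} for every prime $p$ below any prescribed bound, by computing the fundamental units of $\Q(p^{1/3})$ and $\Q((2p)^{1/3})$ and reading off $\beta \bmod p$; (ii) the conjecture for infinite explicit families of $p$ for which $K$ carries a particularly small fundamental unit, since $0 \neq |\beta| < p$ already gives $p \nmid \beta$; and (iii) the standard heuristic that $\beta \bmod p$ is equidistributed, under which violations are exceedingly sparse. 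Finally, I would stress that the unconditional theorems of this paper are engineered --- through the exponent $\iota$ --- precisely so as not to rely on \cref{AACM_cubic}; settling it would bring only the cosmetic benefit of always permitting $\iota = 1$.
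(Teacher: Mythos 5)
This statement is a conjecture, not a theorem: the paper supplies no proof, only the analogy with the classical Ankeny--Artin--Chowla--Mordell conjecture and a Magma verification for all $p < 10^{5}$, so your conclusion that no unconditional argument is currently available is exactly the paper's own stance. Your $p$-adic reformulation is sound (indeed $p \mid \beta$ iff $\epsilon \equiv \alpha \pmod{\mathfrak{p}^{2}}$, i.e.\ $v_{\mathfrak{p}}(\log_{p}\epsilon) > 1$ for $p \geq 5$), and your fallback --- numerical verification up to a bound plus the observation that the second main theorem (\cref{main_unify}) is engineered via $\iota$ to avoid relying on \cref{AACM_cubic} --- coincides with what the paper actually does.
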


The above \cref{AACM_cubic} for pure cubic fields $\Q(P^{1/3})$ may be regarded as an analogue of the classical Ankeny-Artin-Chowla-Mordell conjecture for real quadratic fields $\Q(p^{1/2})$ (cf.\ \cite{AAC,Mordell_Pellian_II}).
In fact, the authors verified in \cite{Hirakawa-Shimizu_arXiv}
that \cref{AACM_cubic} for all $p < 10^{5}$ by numerical examination using Magma \cite{Magma}.
Anyway, \cref{Hirakawa-Shimizu} gives, in a certain uniform manner, an explicit construction
of infinitely many non-singular plane curves which violate the local-global principle,
but it works only conditionally for higher odd degrees.
As far as the author knows,
this is the best result on explicit construction of non-singular plane curves of a general odd degree
which violate the local-global principle.

On the other hand,
in even degree case,
after Fujiwara's counterexamples of degree $5$,
the first counterexamples of degree $4$ were found by Bremner-Lewis-Morton \cite{Bremner-Lewis-Morton}
and Schinzel \cite{Schinzel_Hasse} independently,
whose equations are given as follows respectively:
\[
	3X^{4}+4Y^{4} = 19Z^{4} \quad \text{and} \quad
	X^{4}-2Y^{4}-16Y^{2}Z^{2}-49Z^{4} = 0.
\]
Moreover,
after Poonen's family of counterexamples of degree $3$,
Nguyen \cite{Nguyen_AMS,Nguyen_QJM,Nguyen_Tokyo} gave infinitely many counterexamples of almost all even degrees.
\footnote{
In fact, the recipe in \cref{Nguyen_QJM} actually gives explicit counterexamples of degree $4k+2$ for every $k \geq 1$.
On the other hand,
Nguyen \cite{Nguyen_Tokyo} asserted only for $k \neq 1, 2, 4$ that the recipe in \cref{Nguyen_Tokyo}
actually gives explicit counterexamples of degree $4k$.
See also \cite{Nguyen_AMS} for an explicit infinite family of counterexamples of degree $4$.
	}
Although it is difficult to describe his result in complete detail,
we recall here a part of them as follows:

\begin{theorem} [{\cite[Theorem 1.4]{Nguyen_QJM}}] \label{Nguyen_QJM}
Let $n = 2k$ be an even integer with $k \geq 1$.
Take $p, d, m, \text{and} \ \alpha$ as follows:
\begin{enumerate}
\item
$p$ is a prime number such that $p \equiv 1 \bmod{8}$.

\item
$d$ is an integer which is a quadratic non-residue in $\F_{p}^{\times}$ and prime to $n$.

\item
$m$ is an even integer such that $q := d^{2}+pm^{2}$ is a prime number.

\item
$\alpha$ is a rational number such that
$\alpha \in \Z_{l}$ for every prime divisor $l$ of $dp$
and $\alpha \neq 0, qp^{-k}, qd^{-k}, (m(d+p)-2q)((dp)^{k}-d^{k}-p^{k})^{-1}$.
\end{enumerate}
Set
$A = q-\alpha p^{k}$,
$B = q-\alpha d^{k}$,
and $C = m(d+p)-2q-\alpha((dp)^{k}-d^{k}-p^{k})$.
Then, the equation
\begin{align*}
	& pq^{2}X^{4k+2} + Y^{4k-2}(d(d+p)X^{2}-qY^{2})(pm^{2}(d+p)X^{2}-dqY^{2}) \\
	&\quad - Z^{2}(AX^{2k} + BY^{2k} + CX^{k}Y^{k} + \alpha Z^{2k})^{2} = 0
\end{align*}
defines a plane curve $C_{p, d, m, \alpha}$ of degree $4k+2$ over $\Q$
which violates the local-global principle.
\end{theorem}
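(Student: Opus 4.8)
The plan is to follow the Lind--Reichardt--Selmer--Fujiwara paradigm, in the form used by Nguyen: the square factor $Z^{2}(AX^{2k}+BY^{2k}+CX^{k}Y^{k}+\alpha Z^{2k})^{2}$ on the right forces every $\Q$-point of $C_{p,d,m,\alpha}$ to make the binary form
\[
	N(X,Y) := pq^{2}X^{4k+2}+Y^{4k-2}\bigl(d(d+p)X^{2}-qY^{2}\bigr)\bigl(pm^{2}(d+p)X^{2}-dqY^{2}\bigr)
\]
a perfect square (and more), and the task is to rule this out by descent while checking that $C_{p,d,m,\alpha}$ still has points everywhere locally. So the proof has three parts: (i) non-singularity; (ii) local solubility at every place of $\Q$; (iii) global insolubility, which is the crux.

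For (i) I would write out the three partial derivatives of the defining form and show that a common zero forces $X=Y=Z=0$. The values of $\alpha$ excluded in condition~(4) are exactly those for which one of $A,B,C$ degenerates, or the curve acquires a node at a coordinate vertex $(1{:}0{:}0),(0{:}1{:}0),(0{:}0{:}1)$, or on a coordinate line; outside this finite set, and using $\gcd(d,2n)=1$ and the primality of $p$ and $q$, a resultant computation gives smoothness. This is a finite verification, and not where the difficulty lies.

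For (ii): on the slice $Y=0$ the form becomes $pq^{2}X^{4k+2}-Z^{2}(AX^{2k}+\alpha Z^{2k})^{2}$, whose sign changes, so an intermediate-value argument produces a point over $\R$. At a prime $\ell\nmid 2n\,p\,d\,q$ of good reduction, the Weil estimate for a smooth plane curve of degree $4k+2$ over $\F_{\ell}$ plus Hensel's lemma supplies an $\ell$-adic point for $\ell$ large, the finitely many small good primes being checked directly. The real content is at the bad primes $\ell\mid 2n\,p\,d\,q$, where one exhibits explicit local points: $p\equiv 1\bmod 8$ makes $2,-1,-2$ squares in $\Q_{p}$ and lets one solve the relevant quadratic congruences $p$-adically; $q=d^{2}+pm^{2}$ being prime and $d$ a quadratic non-residue mod $p$ fix the Hilbert symbols at $p$, at $q$, and at $2$; and $m$ even handles the $2$-adic point. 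I would package this as a list of Hilbert-symbol identities, each verified by quadratic reciprocity.

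Part (iii) is the main obstacle. Given a $\Q$-point, scale to a primitive integral triple $(X,Y,Z)$, so that $N(X,Y)=W^{2}$ with $W=Z(AX^{2k}+BY^{2k}+CX^{k}Y^{k}+\alpha Z^{2k})$ (the equation of degree $2k+1$ for $Z$ that this imposes is a further constraint, kept in reserve). First one rules out $XY=0$: $N(X,0)=pq^{2}X^{4k+2}$ would force $p$ to be a square and $N(0,Y)=dq^{2}Y^{4k+2}$ would force $d$ to be one, both impossible, while $\alpha\neq 0$ excludes $(0{:}0{:}1)$. For $XY\neq 0$ the plan is a $2$-descent on $N$. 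Reducing modulo $q$ (where both quadratic factors of $N$ become units times $X^{2}$, so $N$ itself becomes a square times $X^{4}Y^{4k-2}$) forces $q\nmid XY$ and pins the class of $W$ mod $q$; reducing modulo the prime divisors of $d$ collapses $N$ to $p^{3}m^{4}X^{2}(X^{4k}-Y^{4k})$ up to a square, so that $W^{2}$ being a square, together with $(p/d)=-1$ (which follows from $d$ being a non-residue mod $p$ and $p\equiv 1\bmod 4$), forces $X^{4k}-Y^{4k}$ into a fixed non-square class; reducing modulo $p$ collapses $N$ to $-d^{5}Y^{4k}(X^{2}-Y^{2})$ up to a square, constraining the class of $X^{2}-Y^{2}$ modulo $p$. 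I expect the descent proper to factor $N$ (after extracting a content dividing $2n\,p\,d\,q$) into two binary forms whose greatest common divisor along a primitive solution divides that fixed integer, so that each factor is a square up to squares and that integer; comparing the quadratic (Jacobi) symbols of $p$, $d$, $q$ attached to the two factors then produces a reciprocity obstruction of Lind--Reichardt type, contradicting the square relation. The delicate point --- and the reason the hypotheses (1)--(4) are so tightly calibrated --- is that the \emph{same} arithmetic of $p$, $d$, and $q$ must make every local conic soluble in (ii) and yet obstruct the global point in (iii); arranging both simultaneously, and correctly controlling the content and the gcd of the descent factors, is where the real work lies.
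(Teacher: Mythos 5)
First, a point of context: the paper you are working from does not prove this statement at all — it is quoted verbatim from Nguyen's work (\cite[Theorem 1.4]{Nguyen_QJM}) as background, and the only thing the present paper says about its proof is that each such curve covers a hyperelliptic curve which itself violates the local--global principle, the latter violation being explained by a Brauer--Manin obstruction attached to a degree-$2$ Brauer class. Your opening reduction is consistent with that picture: writing $W=Z(AX^{2k}+BY^{2k}+CX^{k}Y^{k}+\alpha Z^{2k})$, any rational point of $C_{p,d,m,\alpha}$ with $(X,Y)\neq(0,0)$ gives a rational point of the hyperelliptic curve $W^{2}=N(X,Y)$, and the case $X=Y=0$ is excluded by $\alpha\neq 0$; your local computations of $N$ modulo $q$, modulo the prime divisors of $d$, and modulo $p$ are also correct. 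So the skeleton is the right one, and is essentially Nguyen's (an elementary descent with Jacobi symbols versus evaluating a quaternionic Brauer class are two dressings of the same quadratic-reciprocity argument).

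The problem is that the two steps that constitute the actual theorem are not carried out, and as written they could not be completed in the form you describe. For global insolubility you say you ``expect'' $N$ to factor into two binary forms with controlled gcd and that comparing Jacobi symbols ``then produces a reciprocity obstruction''; no factorization is exhibited, no fixed square classes are pinned down, and no contradiction is derived — this is precisely the content of the theorem, not a detail to be kept in reserve. For local solubility the situation is worse than a gap: the argument you propose does not work uniformly over the family. The Weil--Hensel step at good primes only covers $\ell$ large relative to the genus $\sim k$, and ``the finitely many small good primes being checked directly'' is not available when $(p,d,m,\alpha)$ ranges over an infinite family; moreover your list of bad primes $\ell\mid 2npdq$ omits the primes entering the numerator and denominator of $\alpha$ (condition (4) only constrains $\alpha$ at $\ell\mid dp$), at which the reduction can be badly singular. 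Crucially, local solubility of the plane curve does not follow from local solubility of the covered hyperelliptic curve, and the specific shapes $A=q-\alpha p^{k}$, $B=q-\alpha d^{k}$, $C=m(d+p)-2q-\alpha((dp)^{k}-d^{k}-p^{k})$ exist exactly so that designated local points of the hyperelliptic curve lift to the cover; a proof must use these identities explicitly, which your sketch never does. Until the descent contradiction is written out and the local points are produced from the structure of $A,B,C,\alpha$ rather than by unspecified ``Hilbert-symbol identities,'' this remains a plausible plan rather than a proof.
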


\begin{theorem} [{a specialized form at $k = 0$ of \cite[Theorem 3.1]{Nguyen_Tokyo}}] \label{Nguyen_Tokyo}
Let $m, n \geq 1$ be integers such that $m < n$.
Let $p$ be a prime number and $\alpha, \beta \in \Z$.
Define a homogeneous polynomial $d(X, Y) \in \Z[X, Y]$ by
\[
	d(X, Y) = X^{m}(p(X+Y))^{n} - (-Y)^{m}(p(X+Y)+Y)^{n}.
\]
Suppose that
\begin{enumerate}
\item
$p \equiv 1 \bmod{8}$.

\item
$\gcd(\alpha\beta, p) = 1$.

\item
Every odd prime divisor of $\alpha d(\alpha, \beta)$ is a quadratic residue in $\F_{p}^{\times}$.

\item
$\beta$ is a quadratic non-residue in $\F_{p}^{\times}$.

\item
Every odd prime divisor of $\beta^{2}+p(\alpha+\beta)^{2}$ is prime to $n-m$.
\end{enumerate}
Then, there exists a family of explicit homogeneous polynomials $Q_{\zeta} = Q_{\zeta}(X, Y, Z) \in \Q[X, Y, Z]$ of degree $2n-1$
parametrized by a rational number $\zeta \in \Q$ such that
the equation
\[
	Q_{\zeta}(X, Y, Z)^{2}Z^{2}
	= p(\alpha X^{2n} + \beta Y^{2n})^{2}
		+ Y^{4n-4m}((p\alpha + (p+1)\beta)X^{2m} - p(\alpha+\beta)Y^{2m})^{2}
\]
defines a plane curve $C_{p, \alpha, \beta, \zeta}$ of degree $4n$ over $\Q$
which violates the local-global principle.
\end{theorem}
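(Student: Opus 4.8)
This statement is \cite[Theorem 3.2]{Nguyen_Tokyo} restricted to $k = 1$, so the shortest plan is to verify that conditions (1)--(5) above are exactly the hypotheses of that theorem when $k$ is set to $1$ and then to quote it. It is worth recording the mechanism behind it, since the same device --- a norm equation over an imaginary quadratic field, examined place by place --- will recur in our own construction. Write $K = \Q(\sqrt{-p})$ and set
\[
	A(X, Y) = \alpha X^{2n} + \beta Y^{2n}, \qquad
	B(X, Y) = Y^{2n - 2m}\bigl((p\alpha + (p+1)\beta)X^{2m} - p(\alpha + \beta)Y^{2m}\bigr),
\]
so that the right-hand side of the defining equation is $B^{2} + pA^{2} = N_{K/\Q}(B + A\sqrt{-p})$ and the curve $C = C_{p, \alpha, \beta, \zeta}$ records that this norm equals the square $(Q_{\zeta}(X, Y, Z)Z)^{2}$. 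The polynomial $Q_{\zeta}$, whose explicit shape is the technical core of \cite{Nguyen_Tokyo}, is to be chosen so that $C$ acquires a rational point over every completion of $\Q$.

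For local solubility I would argue place by place. At $p$ one has $B^{2} + pA^{2} \equiv \overline{\beta}^{2}X^{4m}Y^{4n - 4m} \bmod p$, which is a square, and $p \equiv 1 \bmod 8$ permits a Hensel lift; conditions (2) and (3), which ask that $\gcd(\alpha\beta, p) = 1$ and that every odd prime dividing $\alpha\, d(\alpha, \beta)$ be a square modulo $p$, take care of the primes of bad reduction; condition (5) takes care of the primes dividing $\beta^{2} + p(\alpha + \beta)^{2} = N_{K/\Q}(\beta + (\alpha + \beta)\sqrt{-p})$, the norm attached to the special fibre over $[X:Y] = [1:1]$; and the real place together with the remaining primes follow from the form of $Q_{\zeta}$.

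For insolubility over $\Q$, suppose $[X : Y : Z] \in C(\Q)$ with $X, Y, Z$ coprime integers; put $W = Q_{\zeta}(X, Y, Z)Z$ and $\xi = B(X, Y) + A(X, Y)\sqrt{-p} \in \mathcal{O}_{K}$, so that $N_{K/\Q}(\xi) = W^{2}$. Conditions (2), (3), (5) are calibrated so that, at such a point, the prime factorization of $(\xi)$ in $\mathcal{O}_{K}$ forces $(\xi) = \mathfrak{I}^{2}$ for some ideal $\mathfrak{I}$: every rational prime that splits or stays inert in $K$ and is prime to $2p$ divides $(\xi)$ to even order because it cannot divide both $A(X, Y)$ and $B(X, Y)$, while the primes above $2$ and $p$ ramify, so their valuations on the integer $W$ are even as well. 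Then $[\mathfrak{I}] \in \Cl_{K}[2]$, and genus theory for $\Q(\sqrt{-p})$ --- in which only $2$ and $p$ ramify --- gives $\Cl_{K}[2] = \{1, [\mathfrak{l}]\}$ with $\mathfrak{l}$ the prime above $2$. Hence $\mathfrak{I}$ is principal or $[\mathfrak{I}] = [\mathfrak{l}]$, and since the unit group of $\mathcal{O}_{K}$ is $\{\pm 1\}$ (here $p \geq 17$) one gets $\xi = \pm 2^{\varepsilon}\gamma^{2}$ with $\varepsilon \in \{0, 1\}$ and $\gamma \in K$. Reducing modulo $\mathfrak{p} = (\sqrt{-p})$, where $\mathcal{O}_{K}/\mathfrak{p} \cong \F_{p}$, gives $\overline{B(X, Y)} = \pm\overline{2}^{\varepsilon}\overline{\gamma}^{2}$; since $p \equiv 1 \bmod 8$ makes $-1$ and $2$ both squares modulo $p$, $\overline{B(X, Y)}$ is a square modulo $p$. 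But $\overline{B(X, Y)} \equiv \overline{\beta}\,(X^{m}Y^{n - m})^{2} \bmod p$, so $\overline{\beta}$ would be a square modulo $p$ --- once the border cases $p \mid X$ and $p \mid Y$ are excluded using the primitivity of $(X, Y, Z)$ and the explicit form of $Q_{\zeta}$ --- contradicting condition (4). Therefore $C(\Q) = \varnothing$.

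I expect the main obstacle to be not any individual step but the bookkeeping that keeps this descent clean: one must show that at a rational point no prime divisor of $A(X, Y)$ and $B(X, Y)$ survives to break the identity $(\xi) = \mathfrak{I}^{2}$, which is precisely why the hypotheses are phrased through the auxiliary combination $d(\alpha, \beta)$ and the norm $\beta^{2} + p(\alpha + \beta)^{2}$ and demand that their odd prime divisors be squares modulo $p$; one must also exhibit $Q_{\zeta}$ explicitly and check local solubility at exactly those primes. Since \cite[\S3]{Nguyen_Tokyo} carries all of this out for general $k$, in practice I would simply specialize that argument to $k = 1$.
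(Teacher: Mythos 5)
Your proposal takes the same route as the paper: this statement appears there purely as a quotation of Nguyen's Theorem 3.2 specialized at $k=1$, with no independent proof given, and you likewise reduce it to checking that conditions (1)--(5) are that theorem's hypotheses at $k=1$ and citing it. Your sketch of the underlying mechanism (passing to $W^{2}=pA^{2}+B^{2}$, i.e.\ the covered hyperelliptic curve, and deriving a contradiction with condition (4) via a norm/descent argument in $\Q(\sqrt{-p})$) is consistent with the paper's own description of Nguyen's examples as being obstructed by a degree-$2$ Brauer class on that hyperelliptic quotient, so no gap needs to be flagged.
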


A remarkable character which the families obtained by Nguyen share is that
each member of these families covers a hyperelliptic curve which violates the local-global principle,
and the latter violation of the local-global principle is explained by the Brauer-Manin obstruction by a certain Brauer class of degree $2$.


As the first main theorem of this article,
we introduce another family of non-singular plane curves of even degrees $n \geq 8$
which violate the local-global principle.

\begin{theorem} [First main theorem] \label{main_even}
Let $u$ be an odd square-free integer such that $u \not\equiv \pm4 \bmod{9}$.
Set $P = 2u$.
Let $\epsilon = \alpha + \beta P^{1/3} + \gamma P^{2/3} \in \R_{> 1}$
be the fundamental unit of $\Q(P^{1/3})$
with $\alpha, \beta, \gamma \in \mathbb{Z}$.
Suppose that $\beta$ is even and the class number of $\Q(P^{1/3})$ is odd.
Let $n \geq 8$ be an even integer,
and $m \geq 3$ be an odd integer such that $m < n$.
Then, there exist
infinitely many $(n-6)/2$-tuples of pairs of integers $(b_{j}, c_{j})$ $(1 \leq j \leq (n-6)/2)$
satisfying the following condition:

There exist infinitely many prime numbers $l$ and infinitely many pairs of integers $(b_{0}, c_{0})$ such that
the equation
\begin{equation} \label{equation_even}
	(X^{3}+P^{2}Y^{3})(b_{0}X^{3}+lc_{0}Y^{3}) \prod_{j = 1}^{(n-6)/2} (b_{j}^{2}X^{2}+b_{j}c_{j}XY+c_{j}^{2}Y^{2}) = l^{m}Z^{n}
\end{equation}
define non-singular plane curves of degree $n$ which violate the local-global principle.

Moreover, for each $n$,
there exists a set of such $(n-4)/2$-tuples $((b_{j}, c_{j}))_{0 \leq j \leq (n-6)/2}$
which gives infinitely many geometrically non-isomorphic classes of such curves of degree $n$.
\end{theorem}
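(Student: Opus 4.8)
The plan is to adapt the strategy behind \cref{Hirakawa-Shimizu} to the even-degree setting, where the cubic factor $X^{3}+P^{2}Y^{3}$ plays the role that $X^{3}+P^{\iota}Y^{3}$ played there, and the new linear-in-cubes factor $b_{0}X^{3}+lc_{0}Y^{3}$ supplies the extra degree of freedom needed to reconcile the parity. First I would set up the arithmetic of the associated Fermat-type equation $x^{3}+P^{2}y^{3} = l^{m}z^{n}$ (together with the auxiliary equation coming from the $b_{0}X^{3}+lc_{0}Y^{3}$ factor): using that the class number of $\Q(P^{1/3})$ is odd and that $m$ is odd and coprime to $n$ appropriately, one factors the left-hand side in the ring of integers of $\Q(P^{1/3})$, controls the ambiguity by units, and shows that a primitive integral solution would force a contradiction modulo a prime $l$ chosen via \cref{HBM}. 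Concretely, I would choose $l$ to be a prime represented by a suitable cubic polynomial (so that $l \equiv 1 \bmod 3$ and $P$, or an appropriate twist, is a cube mod $l$ or not, as required), which is exactly where the hypothesis $u \not\equiv \pm 4 \bmod 9$ and the evenness of $\beta$ enter: they guarantee the relevant cubic residue symbol and the behaviour of the fundamental unit $\epsilon$ modulo $l$ come out consistently, blocking global solubility.

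Next I would verify local solubility everywhere. Over $\R$ this is immediate since $n$ is even and the right-hand side $l^{m}Z^{n}$ can be made positive. At finite primes $v$, for $v \nmid l$ one writes down an obvious $\Q_{v}$-point by making the product of cubic/quadratic factors on the left into an $n$-th power times $l^{m}$ using units in $\Z_{v}^{\times}$ (this is where choosing $n$ even helps, since $n$-th powers in $\Z_{v}^{\times}$ have large index only at $v \mid n$, which we dodge by the coprimality conditions on the $b_{j},c_{j}$ and on $l$); at $v = l$ one uses the factor $X^{3}+P^{2}Y^{3}$ together with the $l \mid c_{0}$-adjusted factor to absorb the $l^{m}$, and Hensel's lemma upgrades an approximate solution to an honest $\Q_{l}$-point. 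The quadratic factors $b_{j}^{2}X^{2}+b_{j}c_{j}XY+c_{j}^{2}Y^{2}$ are harmless locally since they are norm forms from $\Q(\zeta_{3})$ and are nonzero at the points we construct.

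I would then handle non-singularity and the infinitude of geometrically non-isomorphic members exactly as in \cref{Hirakawa-Shimizu}: the curve \cref{equation_even} is smooth for a generic choice of the tuples $(b_{j},c_{j})$ (the discriminant of the defining form is a nonzero polynomial in the parameters, so its non-vanishing locus is Zariski-dense), and varying the parameters changes a geometric invariant — e.g.\ the cross-ratio configuration of the linear factors of the left-hand side over $\overline{\Q}$, or the $j$-invariant-type data attached to the branch locus — so that only finitely many parameter values can give curves isomorphic to a fixed one; hence one extracts an infinite family of pairwise non-isomorphic curves. The $(n-6)/2$ versus $(n-4)/2$ bookkeeping in the statement is just the count of quadratic factors plus the one cubic factor $b_{0}X^{3}+lc_{0}Y^{3}$, and I would make this matching of degrees explicit at the start: $3 + 3 + 2\cdot\frac{n-6}{2} = n$.

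The main obstacle I anticipate is the global insolubility argument for the Fermat-type equation in the even-degree case: unlike the odd-degree situation of \cref{Hirakawa-Shimizu}, where $\iota$ was pinned down by the Ankeny--Artin--Chowla--Mordell-type behaviour of $\epsilon$, here one must simultaneously control two equations (from the two cubic factors $X^3+P^2Y^3$ and $b_0X^3+lc_0Y^3$) and show that no primitive solution survives modulo $l$ for the $l$ produced by \cref{HBM}; the delicate point is that the constraints on $l$ needed for local solubility at $l$ and the constraints needed for global \emph{in}solubility must be compatible, and the hypotheses ``$\beta$ even'' and ``class number of $\Q(P^{1/3})$ odd'' are precisely what I expect to reconcile them — verifying that reconciliation, and that \cref{HBM} can indeed be invoked with a cubic polynomial whose values satisfy all the congruence conditions, is the crux.
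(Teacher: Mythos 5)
Your overall architecture (reduce to the Fermat-type equation $x^{3}+P^{2}y^{3}=l^{m}z^{n}$, generate auxiliary primes by \cref{HBM}, then treat local solubility, non-singularity and non-isomorphy separately) matches the paper's, but the two steps that carry the real content are misidentified. First, the paper needs $l\equiv 2\bmod 3$, not $l\equiv 1\bmod 3$: this is forced by $3$-adic solubility (one needs $l^{m}\equiv b_{0}\prod_{j}b_{j}^{2}\equiv 2\bmod 3$, which is also why $m$ must be odd) and by the gcd argument in \cref{global_even}, where irreducibility of $b_{j}^{2}T^{2}+b_{j}c_{j}T+c_{j}^{2}$ modulo $l$ uses $l\equiv 2\bmod 3$. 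In particular there is no cubic residue symbol modulo $l$ in play (every element of $\F_{l}^{\times}$ is a cube). The actual obstruction, in \cref{Fujiwara_Fermat_unify} with $(\iota,\nu)=(2,1)$ and $p=2$, is a descent in $\mathcal{O}_{K}$, $K=\Q(P^{1/3})$: since $l\equiv 2\bmod 3$ splits as $\mathfrak{p}_{l}\mathfrak{p}_{l^{2}}$ and $l=N_{K/\Q}(a+b\pi+c\pi^{2})$, a primitive solution with $l\nmid\gcd(x,y)$ yields $x+y\pi^{2}=\epsilon^{k}(a+b\pi+c\pi^{2})^{m}w^{2}$ for some $w\in\mathcal{O}_{K}$ (the odd class number kills the class-group ambiguity), and comparing the coefficient of $\pi$ modulo $2$ — this is exactly where ``$\beta$ even'' enters, not the behaviour of $\epsilon$ modulo $l$ — gives $0\equiv mabw_{0}\bmod 2$ with $m,a,b,w_{0}$ all odd, a contradiction. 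Your proposal never isolates this congruence, which is the crux.

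Two further steps would fail as written. Your local argument at $v\nmid l$ by ``making the left side an $n$-th power times $l^{m}$'' is unsound: the subgroup of $n$-th powers in $\Z_{v}^{\times}$ has index divisible by $\gcd(n,v-1)$, which is large for infinitely many $v\nmid n$ (e.g.\ all $v\equiv 1\bmod n$). The paper instead produces $Z=0$ points: the splitting field of $(X^{3}+P^{2}Y^{3})(b_{1}^{2}X^{2}+b_{1}c_{1}XY+c_{1}^{2}Y^{2})$ is an $S_{3}$-extension unramified outside $3P$, so the form acquires a linear factor over $\Q_{v}$ for every $v\nmid 3P$, and the finitely many remaining places are handled by the explicit congruence conditions on $b_{0},c_{0}$ and the hypothesis $u\not\equiv\pm4\bmod 9$. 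Finally, you treat $b_{0}X^{3}+lc_{0}Y^{3}$ as a free extra factor, but without the resultant-type constraint $P^{2}b_{0}-lc_{0}=\pm 3^{k}$ (with $k=0$ unless $P\equiv\pm2,\pm4\bmod 9$) one cannot rule out a common prime divisor of $X^{3}+P^{2}Y^{3}$ and $b_{0}X^{3}+lc_{0}Y^{3}$ along a putative solution, and the reduction to the Fermat-type equation collapses; choosing $(b_{0},c_{0})$ to satisfy this identity simultaneously with the choice of $l$ is an essential, and nontrivial, part of the construction.
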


Here, note that the plane curve defined by \cref{equation_even} covers a hyperelliptic curve defined by
\begin{equation} \label{equation_hyp}
	(X^{3}+P^{2}Y^{3})(b_{0}X^{3}+lc_{0}Y^{3}) \prod_{j = 1}^{(n-6)/2} (b_{j}^{2}X^{2}+b_{j}c_{j}XY+c_{j}^{2}Y^{2}) = l^{m}Z^{2}.
\end{equation}
This geometric situation is similar to Nguyen's examples in \cref{Nguyen_QJM,Nguyen_Tokyo}.
However, we demonstrate in \S6 that the above hyperelliptic curve may has a $\Q$-rational point.
In particular, the violation of the local-global principle for the plane curve defined by \cref{equation_even}
cannot be explained by the Brauer-Manin obstruction on the hyperelliptic curve defined by \cref{equation_hyp} in general.
This arithmetic situation is in contrast to Nguyen's examples.

On the other hand,
as its visual suggested,
the family in \cref{main_even} is a variant of the family of odd degrees in \cref{Hirakawa-Shimizu}.
In fact, their proofs are quite similar.
For example,
the key ingredients of the proof of \cref{main_even} are the generation of prime numbers of the form $X^{3}+P^{2}Y^{3}$ (cf.\ \cref{HBM})
and a property of the primitive solutions of the Fermat type equation $x^{3}+P^{2}y^{3} = l^{m}z^{n}$.
Moreover, the proof of the infinitude of geometric isomorphy is essentially given in the previous work \cite[Lemma 4.1]{Hirakawa-Shimizu_arXiv}.

It should be noted, however, that
if we replace
the cubic form $X^{3}+P^{2}Y^{3}$ in \cref{main_even} to
the original cubic forms $X^{3}+p^{\iota}Y^{3}$ or $X^{3}+(2p)^{\iota}Y^{3}$ in \cref{Hirakawa-Shimizu} for ``$p = 2$",
then the proof of \cref{main_even} is broken down.
In fact,
we need an important but implicit condition under which the proofs of \cref{Hirakawa-Shimizu,main_even} work, that is,
for the fundamental unit $\epsilon$ of $\Q(P^{1/3})$,
the modulo $p^{\iota}$ class of $\epsilon^{p-1}$ has order at most $p$,
which is automatic whenever $p \geq 3$.
However, the modulo $2^{\iota}$ class of the fundamental unit $\epsilon = 1+2^{1/3}+2^{2/3}$ of $\Q(2^{1/3})$ has order $2^{\iota+1}$ for every $\iota \geq 1$.
The condition on the parity of $\beta$ in \cref{main_even} ensures that the above argument works well.
Therefore,
the existence of an auxiliary integer $u$ satisfying the conditions in \cref{main_even},
for example $u = 3, 7, 17, 21, 35, 39, \dots$, is essential in order to ensure that the recipe in \cref{main_even} works well for all even degrees $n \geq 8$ unconditionally.
For the arithmetic background of this subtlety, see \cref{Fujiwara_Fermat_unify,infinite_cubic_forms}.

In order to explain the second main theorem,
we mention another interesting aspect of \cref{main_even}.
Note that although the cubic form $X^{3}+P^{2}Y^{3}$ in \cref{main_even} suggests the analogy with the case of $\iota = 2$ in \cref{Hirakawa-Shimizu},
we do not need to assume that $n$ is divisible by $4 = 2^{2}$.
This is justified by a key ingredient \cref{Fujiwara_Fermat_unify} (see also \cref{release}).
In fact, by combining \cref{Fujiwara_Fermat_unify} with the existing result \cref{Hirakawa-Shimizu},
we obtain the following second main theorem in odd degree case.
Here, note that although the appearance of \cref{main_unify} is almost the same as that of \cref{Hirakawa-Shimizu},
the crucial difference is that, in \cref{main_unify},
we assume only that $n$ is divisible by $p$ but not necessarily divisible by $p^{\iota}$.
This releases us from the highly mysterious \cref{AACM_cubic}.

\begin{theorem} [Second main theorem] \label{main_unify}
Let $p$ be an odd prime number
and $P = 2p \ \text{or} \ p$ so that $P \not\equiv \pm 1 \bmod{9}$.
Let $\epsilon = \alpha + \beta P^{1/3} + \gamma P^{2/3} \in \R_{> 1}$
be the fundamental unit of $\Q(P^{1/3})$
with $\alpha, \beta, \gamma \in \mathbb{Z}$.
Set
\[
	\iota =
	\begin{cases}
	1 & \text{if $\beta \not\equiv 0 \bmod{p}$ or $\beta \equiv \gamma \equiv 0 \bmod{p}$} \\
	2 & \text{if $\beta \equiv 0 \bmod{p}$ and $\gamma \not\equiv 0 \bmod{p}$}
	\end{cases}.
\]
Let $n \ge 5$ be an odd integer divisible by $p$.
Then, there exist infinitely many $(n-3)/2$-tuples of pairs of integers $(b_{j}, c_{j})$ $(1 \leq j \leq (n-3)/2)$
satisfying the following condition:

There exist infinitely many integers $L \in \Z$ such that the equation
\begin{equation} \label{equation_unify}
	(X^{3}+P^{\iota}Y^{3}) \prod_{j = 1}^{(n-3)/2} (b_{j}^{2}X^{2}+b_{j}c_{j}XY+c_{j}^{2}Y^{2}) = LZ^{n}
\end{equation}
define non-singular plane curves of degree $n$ which violate the local-global principle.

Moreover, for each $n$,
there exists a set of such $(n-3)/2$-tuples $((b_{j}, c_{j}))_{1 \leq j \leq (n-3)/2}$
which gives infinitely many geometrically non-isomorphic classes of such curves of degree $n$.
\end{theorem}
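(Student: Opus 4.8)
The plan is to reproduce, essentially verbatim, the recipe underlying \cref{Hirakawa-Shimizu} (and its even-degree counterpart \cref{main_even}), the only new input being that the step controlling the Fermat-type equation is supplied by \cref{Fujiwara_Fermat_unify} instead of the cruder statement that previously forced the divisibility $p^{\iota}\mid n$. Throughout, write $K=\Q(P^{1/3})$ and $\theta=P^{1/3}$, so that for $\iota\in\{1,2\}$ one has $x^{3}+P^{\iota}y^{3}=N_{K/\Q}(x+\theta^{\iota}y)$, and note that $b_{j}^{2}X^{2}+b_{j}c_{j}XY+c_{j}^{2}Y^{2}=N_{\Q(\zeta_{3})/\Q}(b_{j}X-c_{j}\zeta_{3}Y)$. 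I would first fix, for the given $n$, a modulus $M$ divisible by $6P$ together with several auxiliary primes, and impose on the pairs $(b_{j},c_{j})$, $1\le j\le(n-3)/2$, congruence conditions modulo $M$ which (i) make the degree-$n$ binary form on the left of \cref{equation_unify} separable — equivalently, make the plane curve it defines nonsingular, by the Jacobian criterion — with $X^{3}+P^{\iota}Y^{3}$ coprime to each $Q_{j}$ and the $Q_{j}$ pairwise non-proportional, and which (ii), once $L$ is given the shape below, force solubility of \cref{equation_unify} at the real place and at every prime dividing $M$. The admissible tuples cut out in this way form an infinite set.

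Next I would produce $L$. Apply \cref{HBM} to the irreducible binary cubic form $X^{3}+P^{\iota}Y^{3}$ along an arithmetic progression (encoded by the data $\rho,\gamma_{1},\gamma_{2}$) chosen to match the congruences above; this yields infinitely many primes $l$ represented by $X^{3}+P^{\iota}Y^{3}$ in a prescribed residue class. Set $L=l\cdot L_{0}$, where $L_{0}$ is a fixed product of small prime powers determined by the $(b_{j},c_{j})$; varying $l$ then gives the infinitely many integers $L$ of the statement. Solubility of \cref{equation_unify} over $\Q_{v}$ is a finite verification at the finitely many places dividing $M$ — handled by (ii) — and is automatic elsewhere, since there the reduction of the curve is a smooth plane curve of degree $n$ over a large finite field and hence has a smooth rational point which lifts by Hensel's lemma; solubility over $\R$ is likewise arranged in (ii).

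\emph{The only genuinely arithmetic step} is the proof that \cref{equation_unify} has no $\Q$-rational point. Such a point, cleared of denominators, gives a primitive triple $(x,y,z)\in\Z^{3}$ with $(x^{3}+P^{\iota}y^{3})\prod_{j}Q_{j}(x,y)=Lz^{n}$. If $z=0$ then either $X^{3}+P^{\iota}Y^{3}$ vanishes at $(x,y)$, which is impossible because $P^{\iota}$ is not a perfect cube, or some $Q_{j}(x,y)=0$, which is impossible because $\Disc Q_{j}=-3b_{j}^{2}c_{j}^{2}<0$. Hence $z\ne0$; the coprimality conditions built into the $(b_{j},c_{j})$, together with the shape of $L$, then allow one to strip off the quadratic factors and reduce to a primitive solution of a Fermat-type equation $x^{3}+P^{\iota}y^{3}=L'z'^{n}$ in which $l$ divides $L'$ exactly once. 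At this point \cref{Fujiwara_Fermat_unify} applies and produces a contradiction — and it is precisely here that the hypothesis may be weakened from $p^{\iota}\mid n$ to $p\mid n$, because $p$ is an odd prime, so the modulo $p^{\iota}$ class of $\epsilon^{p-1}$ has order dividing $p$ automatically, which is exactly the condition under which \cref{Fujiwara_Fermat_unify} is available. Thus \cref{equation_unify} violates the local-global principle.

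Finally, the claim on infinitely many geometrically non-isomorphic classes is obtained exactly as in \cite[Lemma 4.1]{Hirakawa-Shimizu_arXiv}: an isomorphism between two of these smooth plane curves of degree $n\ge5$ is induced by an element of $\mathrm{PGL}_{3}$, and the norm-form shape of the factors forces the $\mathrm{PGL}_{3}$-orbit of each defining form to be finite, so the infinitely many admissible tuples $((b_{j},c_{j}))_{1\le j\le(n-3)/2}$ give rise to infinitely many classes. I expect essentially all of the difficulty to be concentrated in \cref{Fujiwara_Fermat_unify} — the ideal-theoretic descent in $\mathcal{O}_{K}$ that reduces the insolubility to a statement about the order of $\epsilon^{p-1}$ modulo $p^{\iota}$; the nonsingularity check, the local analysis, and the non-isomorphy argument are routine adaptations of \cref{Hirakawa-Shimizu} and \cref{main_even}.
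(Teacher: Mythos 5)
Your overall architecture (the recipe of \cref{recipe_odd}, \cref{HBM} for prime generation, \cref{Fujiwara_Fermat_unify} as the new Fermat-type input, and \cite[Lemma 4.1]{Hirakawa-Shimizu_arXiv} for non-isomorphy) is the paper's, but three concrete steps would fail as you set them up. First, imposing only congruence conditions modulo a fixed modulus $M$ on the pairs $(b_j,c_j)$ is not enough for the ``stripping'' of the quadratic factors: that gcd argument needs every prime divisor $q$ of $P^{\iota}b_j^{3}+c_j^{3}$ to satisfy $q\equiv 2\bmod 3$ (so that $b_j^{2}T^{2}+b_jc_jT+c_j^{2}$ is irreducible mod $q$ and a common divisor forces $q\mid\gcd(P^{\iota}b_j,c_j)$); a prime $q\equiv 1\bmod 3$ dividing both $X^{3}+P^{\iota}Y^{3}$ and a quadratic factor cannot be excluded by congruences on $(b_j,c_j)$. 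This is why the paper produces the $(b_j,c_j)$ themselves from \cref{HBM}, as prime values of forms such as $P^{2}(3PX+1)^{3}+(3PY+1)^{3}$, not merely from residue classes.

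Second, your coefficient $L=l\cdot L_{0}$ and your source of $l$ are incompatible with \cref{Fujiwara_Fermat_unify}. That proposition treats exactly $x^{3}+P^{\iota}y^{3}=l^{m}z^{n}$ with a prime-power coefficient; if $L_{0}\neq 1$ carries other primes, the ideal identity $(x+y\pi^{\iota})=\mathfrak{p}_l^{m}\mathfrak{w}^{n}$ underlying the descent is lost, and no extension to composite coefficients is supplied. The paper instead takes $L=l^{p-1}$, which both fits the proposition (with $m=p-1\not\equiv 0\bmod p$) and automatically meets the local-solubility congruences $L\equiv\prod_j b_j^{2}\equiv 1$ modulo $3$ and modulo the prime divisors of $P$ --- the very constraints your $L_{0}$ was meant to absorb. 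Moreover, in the only genuinely new case $\iota=2$ (i.e.\ $\beta\equiv 0$, $\gamma\not\equiv 0\bmod p$), generating $l$ as a value of $X^{3}+P^{\iota}Y^{3}=X^{3}+P^{2}Y^{3}$ is fatal: such $l=N_{K/\Q}(x+y\pi^{2})$ has every associate with $\pi$-coefficient $\equiv 0\bmod p$ (because $\beta\equiv 0\bmod p$ and $\pi^{3}\equiv 0\bmod p$), so hypothesis (2)(b) of \cref{Fujiwara_Fermat_unify}, that $mb$ be prime to $p$, can never be satisfied; the paper generates $l=a^{3}+Pb^{3}$ with $p\nmid b$ via $h(A,B)=(3pA-1)^{3}+P(3pB+3)^{3}$. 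Finally, note that \cref{Fujiwara_Fermat_unify} assumes $\beta\equiv 0\bmod p$, so it says nothing when $\beta\not\equiv 0\bmod p$; that case (where $\iota=1$ and hence $p^{\iota}=p\mid n$) must be discharged by quoting \cref{Hirakawa-Shimizu} itself, a case split your uniform plan omits, and your stated reason for the weakening to $p\mid n$ (the order of $\epsilon^{p-1}$ modulo $p^{\iota}$) is not the hypothesis actually used --- the mechanism is the $(\iota,\nu)=(2,1)$ case of \cref{Fujiwara_Fermat_unify} together with the class number of $K$ being prime to $p$.
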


As a consequence,
we obtain the following conclusion:

\begin{corollary}
For every integer $n$ such that $n = 5$ or $n \geq 7$,
there exist infinitely many non-singular plane curves
of the form \cref{equation_even} or \cref{equation_unify} according to the parity of $n$
which violate the local-global principle.
\end{corollary}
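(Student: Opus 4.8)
The plan is to prove the corollary by a dichotomy on the parity of $n$: for even $n$ I invoke \cref{main_even}, and for odd $n$ I invoke \cref{main_unify} when its hypotheses can be arranged and a refinement of it otherwise. The only thing to verify is that the hypotheses of the invoked statement can indeed be met for every $n$ with $n = 5$ or $n \ge 7$, and the sole substantial point will be the odd degrees for which \cref{main_unify} does not apply directly.

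Suppose first that $n$ is even; since $n = 5$ or $n \ge 7$ we have $n \ge 8$, which is exactly the range of \cref{main_even}. Here I would fix once and for all an auxiliary odd square-free integer $u$ satisfying the hypotheses of \cref{main_even} --- namely $u \not\equiv \pm 4 \bmod 9$, with $\beta$ even for the fundamental unit of $\Q((2u)^{1/3})$ and with $\Q((2u)^{1/3})$ of odd class number --- taking, say, $u = 3$, the first entry of the list $u = 3, 7, 17, 21, 35, 39, \dots$ given above: indeed $\Q(6^{1/3})$ has class number $1$, and, as already noted after \cref{Hirakawa-Shimizu}, the coefficient $\beta$ of its fundamental unit is divisible by $3$, and in fact even. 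Choosing the odd exponent $m = 3$, which is admissible since $3 < 8 \le n$, \cref{main_even} produces infinitely many non-singular plane curves of degree $n$ of the shape \cref{equation_even} violating the local-global principle, in infinitely many geometric isomorphism classes.

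Suppose next that $n$ is odd, so $n = 5$ or $n \ge 7$ with $n$ odd. If $n$ has a prime divisor $p$ with $p \not\equiv \pm 1 \bmod 9$ --- in particular whenever $3 \mid n$ (take $p = 3$, $P = 6$, $\iota = 1$), or whenever $n$ is divisible by one of $5, 7, 11, 13, 23, \dots$ --- then \cref{main_unify} applies with this $p$: one sets $P = p$ or $2p$, computes $\iota \in \{1, 2\}$ from the fundamental unit of $\Q(P^{1/3})$, and obtains infinitely many non-singular plane curves of degree $n$ of the shape \cref{equation_unify} violating the local-global principle, again in infinitely many geometric isomorphism classes. This disposes of every odd $n \ge 5$ whose factorization involves at least one prime outside the residue classes $\pm 1 \bmod 9$.

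The main obstacle is the remaining, genuinely new, family: odd $n$ all of whose prime divisors lie in $\{p : p \equiv \pm 1 \bmod 9\}$, the smallest being $n = 17$. For such $n$, \cref{main_even} is inapplicable since $n$ is odd, and \cref{main_unify} offers no admissible base prime, so the plan is to run a refinement of the argument behind \cref{main_unify}: choose a prime $p \mid n$ (necessarily with $p \equiv \pm 1 \bmod 9$) and work in the companion field $\Q((2p)^{1/3})$, which is automatically of the required type since then $2p \equiv \pm 2 \bmod 9$; the decoupling of $n$ from $p^{\iota}$ afforded by \cref{Fujiwara_Fermat_unify} (cf.\ \cref{release}) keeps the divisibility requirement at the minimal $p \mid n$, exactly as in \cref{main_unify}. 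What must then be supplied by hand is the $3$-adic local solubility that the hypothesis $p \not\equiv \pm 1 \bmod 9$ had ensured in \cref{main_unify}; once this is in place, the descent through the Fermat type equation $x^3 + Ny^3 = Lz^n$, and the infinitude of geometric isomorphism classes via \cite[Lemma 4.1]{Hirakawa-Shimizu_arXiv}, proceed as in the other cases, again yielding curves of the shape \cref{equation_unify}. Granting this refinement, the even-degree family of \cref{main_even}, the generic odd-degree family of \cref{main_unify}, and this exceptional odd-degree family together exhaust all degrees $n = 5$ or $n \ge 7$, which is the assertion of the corollary.
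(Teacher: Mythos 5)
Your parity dichotomy is the intended argument, and your even-degree case is exactly the paper's: fix one admissible auxiliary integer (your $u=3$ is the first entry of the paper's own list; indeed $\epsilon = 109+60\cdot 6^{1/3}+33\cdot 6^{2/3}$, so $\beta = 60$ is even, and $\Q(6^{1/3})$ has class number $1$) together with an odd $m$ with $3 \le m < n$, and quote \cref{main_even}. The odd degrees possessing a prime divisor $p$ with $p \not\equiv \pm 1 \bmod 9$ are likewise handled as the paper intends, via \cref{main_unify}.

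The gap is your ``exceptional family'' of odd $n$ all of whose prime divisors are $\equiv \pm 1 \bmod 9$ (smallest $n = 17$): for these you do not prove anything, you only sketch a ``refinement'' of \cref{main_unify} and then write ``granting this refinement,'' so as written the corollary is not established for such $n$. In fact no refinement is needed, because you have misread the hypothesis of \cref{main_unify}: the mod-$9$ condition there is a condition on $P$, not on the prime $p$. The clause ``$P = 2p$ or $p$ so that $\dots \not\equiv \pm 1 \bmod 9$'' records a \emph{choice} of $P$, as the proof of \cref{main_unify} makes explicit (``Take $u = 2$ or $1$, i.e., $P = 2p$ or $p$ so that $P \not\equiv \pm 1 \bmod 9$''); the occurrence of ``$p$'' instead of ``$P$'' in the theorem's statement is a typo. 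Since $p \equiv \pm 1 \bmod 9$ forces $2p \equiv \pm 2 \bmod 9$, an admissible $P$ exists for \emph{every} odd prime $p$, so \cref{main_unify} applies verbatim to every odd $n \ge 5$ with $p$ any prime divisor of $n$, including $n = 17, 19, \dots$; this is precisely what makes the corollary unconditional. Your worry that $3$-adic solubility must be ``supplied by hand'' is also unfounded: in condition (4) of \cref{recipe_odd} the $3$-adic requirement is keyed to $P \bmod 9$, and it is arranged in the proof of \cref{main_unify} through the choice of the $(b_j, c_j)$ and $L = l^{p-1}$; all of \S3--\S4 (in particular \cref{Fermat_unify,Fujiwara_Fermat_unify}) is already set up under $P \not\equiv \pm 1 \bmod 9$. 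Replacing your third case by this direct application of \cref{main_unify} with $P = 2p$ closes the gap and makes your proof coincide with the paper's.
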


Here, we should emphasize that
although it is unclear from the above statements,
the proofs of \cref{Hirakawa-Shimizu,main_unify} (resp.\ \cref{main_even}) show that
for every odd (resp.\ even) integer $n$ such that $n = 5$ or $n \geq 7$,
we have an algorithm to produce arbitrarily many explicit parameters $(b_{j}, c_{j})$ and $L$ (resp.\ $l$)
for which \cref{equation_unify} (resp.\ \cref{equation_even}) define non-singular plane curves of degree $n$
which violate the local-global principle.
For numerical examples, see \cite[\S5]{Hirakawa-Shimizu_arXiv} and \S6 of the present article respectively.
Note also that for exceptionally small degrees $n = 3, 4, 6$,
we already have an algorithm due to Poonen \cite{Poonen_genus-one} and Nguyen \cite{Nguyen_AMS,Nguyen_QJM}.
(For non-singularity of Nguyen's curves of degree 6 in \cite{Nguyen_QJM},
see \S7 of this article.)
Therefore,
for every arbitrarily given degree $n \geq 3$,
we now obtain an algorithm to produce arbitrarily many explicit non-singular plane curves of degree $n$
which violate the local-global principle.
This gives a conclusion of the classical story of searching explicit ternary forms violating the local-global principle,
which was initiated by Selmer and extended by Fujiwara and others.

We conclude the introduction by presenting the organization of this article.
In \S2, we give a recipe
which exhibits how to construct counterexamples to the local-global principle defined by \cref{equation_even}
from prime numbers of the form $X^{3}+P^{2}Y^{3}$ and the Fermat type equations $x^{3}+P^{2}y^{3} = Lz^{n}$.
In \S3, we reduce the proof of \cref{main_even} to the key \cref{Fermat_unify} by using \cref{HBM}.
By an exactly similar manner,
we reduce the proof of \cref{main_unify} to the key \cref{Fermat_unify}.
The proof of the key \cref{Fermat_unify} itself is given in \S4,
where we again use \cref{HBM}.
In \S5, we give a variant \cref{main_deg4} of \cref{main_even} focusing on degrees divisible by $4$.
In \S6, we demonstrate how our construction in \cref{main_even} works for each given degree.
We give two numerical examples of degree $8$ following the proofs of \cref{main_even,main_deg4} respectively.
It should be emphasized that both of these examples are $\Z/4\Z$-coverings of hyperelliptic curves with $\Q$-rational points.
This means that the violation of the local-global principle for the former plane curves cannot be explained
by the Brauer-Manin obstruction on the latter hyperelliptic curves,
which is in contrast to Nguyen's examples in \cref{Nguyen_QJM,Nguyen_Tokyo}.
In \S7, we give a proof of non-singularity of a sub family of Nguyen's plane curves of degree 6,
which is implicit in the original article \cite{Nguyen_QJM}.

\section{Construction from prime numbers and Fermat type equations}

Let $p = 2$, $u$ be an odd integer, $P = 2u$, and $\iota = 1 \ \text{or} \ 2$.
We fix these integers throughout this section.
In this section, we prove the following proposition,
which gives explicit counterexamples to the local-global principle of even degree $n$
under the assumption that we have
\begin{itemize}
\item
	sufficiently many prime numbers of the form $P^{\iota}b^{3}+c^{3}$ with $b, c \in \Z$ and
	
\item
	integers $L$ such that the equation $x^{3}+P^{\iota}y^{3} = Lz^{n}$ has a specific property.
\end{itemize}
In what follows,
for each prime number $l$,
$v_{l}(n)$ denotes the additive $l$-adic valuation of $n \in \Z$.
Furthermore, for every non-zero integer $L$,
we denote the radical of $L$ by $\rad L := \prod_{l} l$,
where $l$ runs over the prime numbers such that $v_{l}(L) \geq 1$.

\begin{proposition} \label{recipe_even}
Let $n$ be an even integer such that $n \geq 8$.
Let $b_{j}, c_{j}$ $(0 \leq j \leq (n-6)/2)$, and $L$ be integers satisfying the following conditions:
\begin{enumerate}
\item
For every $j \geq 1$, $P^{\iota}b_{j}^{3}+c_{j}^{3}$ is a prime number $\equiv 2 \bmod{3}$ and prime to $P$.

\item
$L \neq \pm 1$ and $\gcd(L, b_{j}c_{j}) = 1$ for every $j \geq 0$.
Moreover, for every prime divisor $l$ of $L$, 
we have $l \neq 2$, $l \equiv 2 \bmod{3}$, and $2 \leq v_{l}(L) < n$.

\item
$P^{\iota}b_{0}- \rad{L} \cdot c_{0} = \pm 3^{k}$ with some $k \geq 0$.
Moreover, if $P \not\equiv \pm2, \pm4 \bmod{9}$, then $k = 0$.

\item
For every prime divisor $q$ of $P$ such that $q \equiv 2 \bmod{3}$, $\gcd(q, b_{0}c_{0}) = 1$.
\footnote{
	In fact, $\gcd(q, c_{0}) = 1$ follows automatically from the condition (3).
	}

\item
If $P \not\equiv \pm1 \bmod{9}$,
then $L \equiv b_{0}\prod_{j \geq 1} b_{j}^{2} \not\equiv 0 \bmod{3}$
and $\sum_{j \geq 1} b_{j}^{-1}c_{j} \not\equiv 0 \bmod{3}$.
%

\item
For every primitive triple $(x, y, z)  \in \Z^{\oplus 3}$ satisfying $x^{3}+P^{\iota}y^{3} = Lz^{n}$,
we have $x \equiv y \equiv 0 \bmod{l}$ for some prime divisor $l$ of $L$.
\end{enumerate}
Then, the equation 
\[
	(X^{3}+P^{\iota}Y^{3})(b_{0}X^{3}+ \rad{L} \cdot c_{0}Y^{3})\prod_{j = 1}^{\frac{n-6}{2}}(b_{j}^{2}X^{2}+b_{j}c_{j}XY+c_{j}^{2}Y^{2}) = LZ^{n}
\]
violates the local-global principle.
\end{proposition}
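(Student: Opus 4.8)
The plan is to split the analysis into a local part — showing the curve $C$ defined by the displayed equation has points over $\R$ and over every $\Q_p$ — and a global part — showing $C(\Q) = \emptyset$. For the global part I would argue by contradiction: suppose $[X_0:Y_0:Z_0] \in C(\Q)$, clear denominators to get a primitive integral triple $(X_0,Y_0,Z_0)$, and analyze the factorization of the left-hand side. The key observation is that the left side is a product of the cubic $F_0 := X^3 + P^\iota Y^3$, the cubic $G := b_0 X^3 + \rad{L}\cdot c_0 Y^3$, and $(n-6)/2$ quadratic forms $Q_j := b_j^2 X^2 + b_j c_j XY + c_j^2 Y^2$, each of which is (up to the prime-to-$P$, prime-to-$L$ hypotheses) essentially coprime to the others when evaluated at a primitive point, with controlled gcd's supported on $3$ and on the primes dividing $P$. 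Since the product equals $L Z_0^n$ and each prime $l \mid L$ has $2 \le v_l(L) < n$, a descent/valuation bookkeeping argument forces the full $l$-power $l^{v_l(L)}$, together with the compensating factor $Z_0^n$, to distribute among the factors in a way that pins down $Z_0$ and then produces a primitive solution of the auxiliary Fermat-type equation $x^3 + P^\iota y^3 = L z^n$ — at which point condition (6) says $x \equiv y \equiv 0 \bmod l$ for some $l \mid L$, contradicting primitivity (or contradicting the coprimality conditions (1),(2),(4)). The prime number hypothesis in (1) (that each $P^\iota b_j^3 + c_j^3$ is prime, $\equiv 2 \bmod 3$, prime to $P$) is exactly what makes the quadratic factors $Q_j$ behave like "rigid" blocks: $Q_j$ evaluated at a primitive point is, up to a bounded factor, forced to absorb a prescribed prime, so it cannot secretly contribute an $n$-th power and thereby spoil the descent.

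For the local solubility, I would produce points as follows. Over $\R$: the cubics $F_0$ and $G$ each take both signs and the $Q_j$ are positive definite or take the sign of their leading coefficient, so one chooses real $(X,Y)$ making the left side have the sign of $L$ times a positive $n$-th power, then solves for $Z$; the condition $L \ne \pm1$ plays no role here but the ability to choose signs freely does. Over $\Q_l$ for $l \mid L$: here one uses that $v_l(L) \ge 2$ and the structure of condition (3), $P^\iota b_0 - \rad{L}\cdot c_0 = \pm 3^k$, which is precisely engineered so that at $[1:1:0]$ (or a nearby point) the cubic $G = b_0 X^3 + \rad{L}\cdot c_0 Y^3$ has controlled $l$-valuation; combined with Hensel's lemma (noting $l \ne 2$, $l \equiv 2 \bmod 3$, so cubing is a bijection on $\F_l^\times$ and $n$-th powers behave well) one lifts to an $l$-adic point. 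Over $\Q_3$ and over $\Q_q$ for $q \mid P$: conditions (3),(4),(5) are the congruence conditions mod $9$ and mod $3$ that guarantee the left-hand side is locally an $n$-th power times $L$ at a suitable point; the footnote to (4) and the case distinction on $P \bmod 9$ in (3),(5) are exactly the bookkeeping needed so that $\Q_3$- and $\Q_q$-points exist. Over $\Q_p$ for all remaining $p$ (those dividing neither $L$ nor $P$ nor equal to $3$): one uses a smooth-point/Hensel argument — the curve has good reduction at such $p$, and by Lang-Weil / Hasse-Weil the reduction has an $\F_p$-point for $p$ large, while the finitely many small $p$ are checked by hand or folded into the choice of parameters.

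The main obstacle, as I see it, is the global step: making the descent on $x^3 + P^\iota y^3 = L z^n$ rigorous. One must show that from a hypothetical $\Q$-point of $C$ one genuinely extracts a \emph{primitive} solution of the auxiliary equation, and this requires carefully tracking how the prescribed primes (the primes $P^\iota b_j^3 + c_j^3$ coming from the $Q_j$, the prime divisors of $L$, and the primes dividing $P$) are forced to sit inside the factors $F_0(X_0,Y_0)$, $G(X_0,Y_0)$, $Q_j(X_0,Y_0)$. The delicate points are: (a) the factors are not literally pairwise coprime — their pairwise gcd's can be divisible by $3$ and by primes dividing $P$, so one needs conditions (4),(5) and the mod-$9$ dichotomy in (3) to control exactly these; (b) one must rule out that some $n$-th power hides inside a quadratic factor $Q_j$, which is where the \emph{primality} (not just coprimality) in (1) is indispensable; and (c) the exponent inequality $2 \le v_l(L) < n$ must be used twice — the lower bound to force $l \mid Z_0$ cannot be evaded and the upper bound to force that $z$ in the auxiliary equation is an honest integer with $z \ne 0$ unless $Z_0 = 0$, the latter case being excluded separately by a direct check on $F_0 \cdot G \cdot \prod Q_j = 0$. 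Once the reduction to condition (6) is in place the contradiction is immediate, so essentially all the work is in the factorization/valuation bookkeeping, and I would organize it as a sequence of short lemmas: first the $Z_0 = 0$ case, then the gcd control between factors, then the extraction of the primitive triple $(x,y,z)$, then invoke (6).
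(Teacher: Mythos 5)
Your global half is essentially the paper's route: assume a primitive integral solution, prove that the factor $X^{3}+P^{\iota}Y^{3}$ is coprime to the product of the remaining factors and to $L$ (this is the content of \cref{global_even}, carried out in four short gcd cases using (1), (2), (3)), extract from this a primitive solution of $x^{3}+P^{\iota}y^{3}=Lz^{n}$, and invoke (6); your worries (a)--(c) correspond to those cases, although two of your glosses are off: the relevant gcd is shown to be exactly $1$ (not merely supported on $3$ and $P$), and the primality in (1) is not needed to exclude ``hidden $n$-th powers'' in the quadratic factors --- coprimality of the block $B$ with $A=X^{3}+P^{\iota}Y^{3}$ and with $L$ already forces $v_{r}(B)=n\,v_{r}(Z)$ for every prime $r\mid B$, so $B$ is automatically an $n$-th power up to sign; primality only packages ``every prime divisor of $P^{\iota}b_{j}^{3}+c_{j}^{3}$ is $\equiv 2 \bmod 3$'' together with $\gcd(P^{\iota}b_{j},c_{j})=1$.

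The genuine gap is in your local half. For the primes not dividing $3PL$ you propose good reduction plus Lang--Weil/Hasse--Weil for large $p$, with ``the finitely many small $p$ checked by hand or folded into the choice of parameters''; but the proposition asserts local solubility for \emph{given} $b_{j},c_{j},L$ satisfying (1)--(6), so there is no freedom to adjust parameters, the Weil bound for a degree-$n\geq 8$ plane curve leaves a large range of small primes untreated, and there may be arbitrarily large primes of \emph{bad} reduction not dividing $3PL$ (primes dividing $b_{j}c_{j}$ or resultants of distinct factors), where a point count of the reduction does not produce a smooth liftable point. The missing idea is Fujiwara's argument used in \cref{local_even}: the splitting field of $(X^{3}+P^{\iota}Y^{3})(b_{1}^{2}X^{2}+b_{1}c_{1}XY+c_{1}^{2}Y^{2})$ is an $S_{3}$-extension of $\Q$ unramified outside $3P$, so at every place $v\nmid 3P$ the decomposition group is cyclic of order $1,2,3$; if it lies in $A_{3}$ then $\zeta_{3}\in\Q_{v}$ and the quadratic factor splits, otherwise the cubic has a root in $\Q_{v}$ --- either way one gets a $\Q_{v}$-point with $Z=0$, uniformly for all $v\nmid 3P$ (including the small primes, the bad-reduction primes, and the primes $l\mid L$). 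Relatedly, your proposed $\Q_{l}$-point for $l\mid L$ built from $G=b_{0}X^{3}+\rad L\cdot c_{0}Y^{3}$ and condition (3) cannot work: $v_{l}(\rad L\cdot c_{0}/b_{0})=1$ is not divisible by $3$, so $G=0$ has no nontrivial $\Q_{l}$-point; the point must come from $X^{3}+P^{\iota}Y^{3}=0$ (every $l$-adic unit is a cube since $l\equiv 2\bmod 3$), i.e.\ again from the argument above. Condition (3) plays its role only in the global step and in the $3$-adic/$q$-adic bookkeeping, and the $\Q_{3}$- and $\Q_{q}$-points ($q\mid P$, $q\equiv 2\bmod 3$) still need the explicit constructions of \cref{local_even} (the cube-root point on $b_{0}X^{3}+\rad L\cdot c_{0}Y^{3}=0$ via (4), and the Hensel lift of $[1:0:1]$ via (5)), which your sketch asserts rather than supplies.
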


\begin{lemma} [local solubility] \label{local_even}
Let $n$ be an even integer such that $n \geq 8$.
Let $b_{j}, c_{j}, L $ $(0 \leq j \leq (n-6)/2)$ be integers satisfying the following conditions:
\begin{enumerate}
\item
For every prime divisor $q$ of $P$ such that $q \equiv 2 \bmod{3}$,
$\gcd(q, b_{0}c_{0}) = 1$.
%

\item
If $P \not\equiv \pm1 \bmod{9}$,
then $L \equiv b_{0} \prod_{j \geq 1} b_{j}^{2} \not\equiv 0 \bmod{3}$ and $\sum_{j \geq 1} b_{j}^{-1}c_{j} \not\equiv 0 \bmod{3}$.
\footnote{
	Obviously, another condition $b_{0}^{2}c_{0} \equiv \pm 1 \bmod{9}$ (or more generally $c_{0}/b_{0} \in \Q_{3}^{\times 3}$)
	is sufficient to the 3-adic solubility.
	However, this condition does not fit to our proof of \cref{main_even}.
	}
\end{enumerate}
Then, the equation 
\[
	F(X, Y, Z)
	:= (X^{3}+P^{\iota}Y^{3})(b_{0}X^{3}+c_{0}Y^{3})\prod_{j = 1}^{\frac{n-6}{2}}(b_{j}^{2}X^{2}+b_{j}c_{j}XY+c_{j}^{2}Y^{2}) - LZ^{n}
	= 0
\]
has non-trivial solutions over $\R$ and $\Q_{l}$ for every prime number $l$.
\end{lemma}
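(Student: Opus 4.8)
The plan is to establish local solubility at each place separately by a hands-on construction of points, taking advantage of the fact that the left-hand side factors into a product of forms in $X, Y$ alone while the right-hand side is $LZ^{n}$. Over $\R$, I would simply note that $-L Z^{n}$ takes both signs as $Z$ ranges over $\R$ when $n$ is even (since $n$ is even, $Z^n\geq 0$, but we may also scale by choosing the sign of the product $F(X,Y,0)$): more precisely, fix any $(X,Y)$ with $X,Y$ real and generic, so that $G(X,Y):=(X^{3}+P^{\iota}Y^{3})(b_{0}X^{3}+c_{0}Y^{3})\prod_{j}(b_{j}^{2}X^{2}+b_{j}c_{j}XY+c_{j}^{2}Y^{2})\neq 0$; the quadratic factors are positive definite (their discriminant $-3c_j^2$ is negative unless $c_j=0$, which is excluded by the coprimality in condition (1) together with genericity of the $(b_j,c_j)$), so the sign of $G(X,Y)$ is governed by the two cubic factors, and by choosing $X/Y$ large positive versus large negative we can make $G(X,Y)$ have either sign; then solve $Z^{n}=G(X,Y)/L$ for a real $Z$. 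Thus $\R$-solubility is immediate and essentially formal.

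For the $l$-adic points the strategy splits into three cases. \emph{Case $l \nmid 3LP$} (the generic primes, including large $l$): here I would produce a smooth $\F_l$-point on the affine curve $F=0$ and lift by Hensel's lemma. Concretely, set $Z=0$ and look for $(X:Y)\in\PP^{1}(\F_l)$ which is a simple root of the degree-$n$ form $G$; since $G$ has at most $n$ roots over $\overline{\F_l}$ and $\#\PP^1(\F_l)=l+1$, as soon as $l\geq n$ there is a nonroot $(x_0:y_0)$, hence $G(x_0,y_0)\in\F_l^{\times}$, and since $l\nmid L$ we may solve $z_0^n=G(x_0,y_0)/L$ over $\F_l$ after possibly enlarging to an unramified extension — but to stay over $\Q_l$ itself I would instead move $Z$: the point $(x_0,y_0,0)$ lies on $G(X,Y)=LZ^n$ reduced mod... no — better, regard the affine surface slice and use that $\partial F/\partial Z = -nLZ^{n-1}$ vanishes at $Z=0$, so instead I differentiate in $X$ or $Y$. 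Cleanest: pick $z_0\in\F_l^{\times}$ arbitrary, then $G(X,Y)=Lz_0^n$ is a nonzero binary form equation of degree $n$ which has a root $(x_0:y_0)$ in $\PP^1(\F_l)$ provided $l\geq n$ (again by counting, since a nonzero degree-$n$ form has $\leq n$ projective roots but we need $\geq 1$, which can fail); to guarantee a root one uses that $G$ is not anisotropic for $l$ large by Chevalley–Warning-type counting on the number of $\F_l$-points of the plane curve $F=0$, which is $l+O(\sqrt l)$ by Weil, hence positive and in fact contains a smooth point for $l\gg 0$, and Hensel lifts it. For the finitely many small primes $l\geq 5$ with $l\nmid 3LP$ one checks directly or absorbs them into the Weil bound once $n$ is fixed.

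\emph{Case $l \mid LP$:} these are the delicate primes and I expect them to be the main obstacle. For $l\mid L$, reduction mod $l$ kills the right-hand side, so an $\F_l$-point with $Z\not\equiv 0$ is impossible on the naive model; instead I would rescale $Z$ by a power of $l$ using $v_l(L)<n$ (condition — here it is hypothesis (1)/(2) of \cref{recipe_even}, but for \cref{local_even} alone we argue directly): write the equation as $G(X,Y)=LZ^n$ and seek a solution with $v_l(X)=v_l(Y)=0$ and $v_l(Z)>0$ chosen so that $n\,v_l(Z)=v_l(L)$ — impossible in general since $v_l(L)$ need not be divisible by $n$ — so rather I take $v_l(Z)=0$ and accept $v_l(G(X,Y))=v_l(L)$, i.e. I need $(x:y)$ with $G$ vanishing to exact order $v_l(L)$ along the appropriate branch; this is where the structure of the factors over $\Q_l$ must be used. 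For $l\mid P$ with $l\equiv 2\bmod 3$ the factor $X^3+P^\iota Y^3\equiv X^3$ and the cube map is a bijection on $\F_l^\times$, which gives enough smooth points; condition (1) of \cref{local_even} ($\gcd(q,b_0c_0)=1$) ensures $b_0X^3+c_0Y^3$ does not also degenerate simultaneously, so one again finds a smooth $\F_l$-point and Hensel-lifts. \emph{Case $l=3$} (and $l=2$): the cube map on $\Q_3^\times$ has index $3$ (resp. the $2$-adic analysis), so solubility is a genuine congruence condition; this is exactly what condition (2) of \cref{local_even} is designed for — the requirement $L\equiv b_0\prod b_j^2\not\equiv 0\bmod 3$ together with $\sum b_j^{-1}c_j\not\equiv 0\bmod 3$ forces a specific residue making $G(X,Y)/L$ a cube in $\Q_3^\times$ for a suitable choice of $(X,Y)\in\Z_3^2$, after which Hensel applies since $3\nmid n$ is false in general — so one instead lifts using the factored form, noting that a product of a cube residue and units close to $1$ is a cube by the standard $\Q_3$ Hensel criterion ($1+9\Z_3\subset\Q_3^{\times 3}$). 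I would organize this as: reduce mod $9$, exhibit one explicit $(X,Y,Z)$ with $F(X,Y,Z)\equiv 0\bmod 9$ and a nonvanishing partial, then lift. The bookkeeping of which factor is a unit and which is divisible by $3$, controlled by the parenthetical footnote's alternative $c_0/b_0\in\Q_3^{\times 3}$, is the part requiring care, but it is a finite check once the residues are pinned down by hypotheses (1)–(2).
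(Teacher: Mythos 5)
Your proposal does not recover the key idea of the paper's proof, and as written it has a genuine gap precisely at the primes you yourself flag as problematic. The paper's argument (following Fujiwara) is Galois-theoretic and uniform: the degree-$5$ subproduct $(X^{3}+P^{\iota}Y^{3})(b_{1}^{2}X^{2}+b_{1}c_{1}XY+c_{1}^{2}Y^{2})$ has splitting field an $S_{3}$-extension of $\Q$ unramified outside $3P\infty$, so for every prime $v$ with $\gcd(v,3P)=1$ the decomposition group is cyclic of order $1$, $2$ or $3$; a cyclic group of order $3$ must fix both roots of the quadratic factor, while a group of order $1$ or $2$ must fix a root of the cubic factor, so in every case this subproduct acquires a linear factor over $\Q_{v}$, and the corresponding point $[x_{0}:y_{0}:0]$ lies on $F=0$ because both sides of the equation vanish there. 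This disposes of \emph{all} primes prime to $3P$ at once --- including every $l\mid L$ (your ``delicate primes'') and every small prime --- with no point counting and no analysis of $v_{l}(L)$.

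By contrast: (i) your Weil-bound argument only produces smooth $\F_{l}$-points for $l$ larger than a bound depending on the genus (for $n=8$ a smooth plane octic has genus $21$, and $l+1-2g\sqrt{l}>0$ already fails for all $l$ into the thousands), and ``check the finitely many small primes directly'' is not an available fallback, because the lemma must hold for infinitely many choices of $(b_{j},c_{j},L)$ about which the hypotheses impose nothing at those small primes; (ii) your discussion of $l\mid L$ never closes --- you correctly note that $n\,v_{l}(Z)=v_{l}(L)$ need not be solvable and that one would instead need $G$ to vanish to exact order $v_{l}(L)$ along a suitable branch, but you do not exhibit such a point, whereas taking $Z=0$ together with a $\Q_{l}$-root of the degree-$5$ subproduct makes the issue evaporate. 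Your handling of $\R$, of $q\mid P$ with $q\equiv 2\bmod 3$ (cube map bijective on $\Z_{q}^{\times}$, using condition (1) to obtain $[1:-(c_{0}/b_{0})^{1/3}:0]$), and of $l=3$ (lifting $[1:0:1]$ via condition (2) and $1+9\Z_{3}\subset\Q_{3}^{\times 3}$) is in the right direction and agrees with the paper, but the central mechanism that settles the remaining primes is missing from your write-up.
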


\begin{proof}
We prove this statement along Fujiwara's argument in \cite{Fujiwara}.
It is sufficient to consider the case $b_{1}c_{1} \neq 0$,
because we can find a rational point $[1 : 0 : 0]$ for $b_{1} = 0$ and $[0 : 1 : 0]$ for $c_{1} = 0$.
Then, since $\iota = 1 \ \text{or} \ 2$, and $b_{1}, c_{1} \neq 0$,
the minimal splitting field of
$(X^{3}+P^{\iota}Y^{3})(b_{1}^{2}X^{2}+b_{1}c_{1}XY+c_{1}^{2}Y^{2})$
is a Galois extension over $\Q$
whose Galois group is isomorphic to the symmetric group of degree 3.
In particular, the residual degree at every prime number is 1, 2, or 3.
Moreover, since this extension is unramified at every prime number prime to $3P$ (and $\infty$),
the polynomial $(X^{3}+P^{\iota}Y^{3})(b_{1}^{2}X^{2}+b_{1}c_{1}XY+c_{1}^{2}Y^{2})$
has a linear factor over $\R$ and over $\Q_{v}$ for every prime number $v$
such that $\gcd(v, 3P) = 1$ or $v \equiv 1 \bmod{3}$.
Here, note that
the assumption (1) ensures that
we obtain a rational point $[X : Y : Z] = [1 : -(c_{0}/b_{0})^{1/3} : 0]$ over $\Q_{q}$
for any prime divisor $q$ of $P$ such that $q \equiv 2 \bmod{3}$, and
the assumption (2) ensures that
we obtain a $3$-adic lift of $[X : Y : Z] = [1 : 0 : 1]$ (cf.\  \cite[Lemma 2.2]{Hirakawa-Shimizu_arXiv}).
\end{proof}

\begin{lemma} [global unsolubility] \label{global_even}
Let $n$ be an even integer such that $n \geq 8$.
Let $a, b_{j}, c_{j}, L$ $(0 \leq j \leq (n-6)/2)$ be integers satisfying the following conditions:
\begin{enumerate}
\item
For every $j \geq 1$, $\gcd(ab_{j}, c_{j}) = 1$ and each prime divisor $q$ of $ab_{j}^{3}+c_{j}^{3}$ satisfies $q \equiv 2 \bmod{3}$.

\item
$L \neq \pm1$ and $\gcd(L, b_{j}c_{j}) = 1$ for every $j \geq 0$.
Moreover, for every prime divisor $l$ of $L$, 
$l \equiv 2 \bmod{3}$ and $2 \leq v_{l}(L) < n$.

\item
$ab_{0}- \rad{L} \cdot c_{0} = \pm 3^{k}$ with some $k \geq 0$.
Moreover, if $a \not\equiv \pm 2, \pm4 \bmod{9}$, then $k = 0$.

\item
For every primitive triple $(x, y, z)  \in \Z^{\oplus 3}$ satisfying $x^{3}+P^{\iota}y^{3} = Lz^{n}$,
we have $x \equiv y \equiv 0 \bmod{l}$ for some prime divisor $l$ of $L$.
\end{enumerate}
Then, there exist no triples $(X, Y, Z) \in \Z^{\oplus 3} \setminus \{ (0, 0, 0) \}$ satisfying
\begin{equation} \label{condition}
	(X^{3}+aY^{3})(b_{0}X^{3}+ \rad{L} \cdot c_{0}Y^{3}) \prod_{j = 1}^{\frac{n-6}{2}} (b_{j}^{2}X^{2}+b_{j}c_{j}XY+c_{j}^{2}Y^{2}) = LZ^{n}.
\end{equation}
\end{lemma}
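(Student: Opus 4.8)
The plan is to argue by contradiction, following Fujiwara's method as adapted in \cite{Hirakawa-Shimizu_arXiv}. Suppose $(X, Y, Z) \in \Z^{\oplus 3} \setminus \{(0,0,0)\}$ satisfies \cref{condition}; after dividing by $\gcd(X,Y,Z)$ we may assume the triple is primitive, and since the left-hand side involves $X^3+aY^3$ and the binary quadratic factors, the right-hand side $LZ^n$ forces $Z \neq 0$ (otherwise each factor vanishes, which is incompatible with primitivity given the conditions on $a$, $b_j$, $c_j$). First I would analyze the factorization on the left: the factors $X^3 + aY^3$, $b_0 X^3 + \rad{L}\cdot c_0 Y^3$, and each $b_j^2 X^2 + b_j c_j XY + c_j^2 Y^2$ are pairwise almost coprime — their pairwise gcds are supported on primes dividing $3aP \cdot \rad{L} \cdot \prod b_j c_j$, and the coprimality hypotheses in (1)–(3) control these. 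The key point is that each binary quadratic factor $b_j^2 X^2 + b_j c_j XY + c_j^2 Y^2 = \frac{(b_j X)^3 - (-c_j Y)^3}{b_j X + c_j Y}$ is, up to the factor $b_j X + c_j Y$, a norm form from $\Q(\omega)$, so its prime divisors are $\equiv 1 \bmod 3$ except for those dividing $3(b_j X + c_j Y)$; combined with condition (1), which says each prime divisor of $ab_j^3 + c_j^3$ is $\equiv 2 \bmod 3$, one extracts congruence constraints modulo the primes dividing $L$.

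The heart of the argument is a descent on the prime divisors $l \mid L$. For such an $l$ we have $l \equiv 2 \bmod 3$ and $2 \le v_l(L) < n$. Looking at \cref{condition} modulo powers of $l$: since $\gcd(L, b_j c_j) = 1$ and the binary quadratic factors represent only $0$ and units modulo $l$ away from $l \mid (b_j X + c_j Y)$ (using $l \equiv 2 \bmod 3$, so $-3$ is a non-square and the quadratic form $b_j^2 X^2 + b_j c_j XY + c_j^2 Y^2$ is anisotropic mod $l$), I would deduce that $l \nmid$ each quadratic factor, hence $v_l$ of the left side is concentrated in the two cubic factors $(X^3 + aY^3)(b_0 X^3 + \rad{L}\cdot c_0 Y^3)$. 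Then $v_l(LZ^n) = v_l(L) + n\, v_l(Z) \ge 2$, and matching $l$-adic valuations shows $l$ divides one of the cubic factors to positive order. Using condition (3), $ab_0 - \rad{L}\cdot c_0 = \pm 3^k$ with $l \nmid 3$, the two cubic forms $X^3 + aY^3$ and $b_0 X^3 + \rad{L}\cdot c_0 Y^3$ cannot both vanish mod $l$ unless $X \equiv Y \equiv 0 \bmod l$ — indeed their "resultant-type" combination $b_0(X^3 + aY^3) - (b_0 X^3 + \rad{L}\cdot c_0 Y^3) = (ab_0 - \rad{L}\cdot c_0)Y^3 = \pm 3^k Y^3$, so $l \mid Y^3$, hence $l \mid Y$, and then $l \mid X^3$, hence $l \mid X$. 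This is exactly the hypothesis of a primitive solution to $x^3 + P^\iota y^3 = L z^n$ having $x \equiv y \equiv 0 \bmod l$; but I need to manufacture such a primitive solution from $(X,Y,Z)$.

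To do that — the construction bridging \cref{condition} to the Fermat-type equation $x^3 + P^\iota y^3 = Lz^n$ — I would use the norm-form structure: each factor $b_j^2 X^2 + b_j c_j XY + c_j^2 Y^2$ and $b_0 X^3 + \rad{L}\cdot c_0 Y^3$ can be absorbed, after multiplying through by suitable units and using that their prime divisors are $\equiv 1 \bmod 3$ while $L$ and $X^3 + aY^3$ are built from primes $\equiv 2 \bmod 3$, to show that $X^3 + aY^3$ itself must be (up to sign and a cube) of the form $L' z'^n$ for a divisor $L' \mid L$; pairing this with the specialization $a = P^\iota$ and chasing the coprimality gives a genuine primitive solution $(x, y, z)$ of $x^3 + P^\iota y^3 = Lz^n$ whose reduction mod the relevant $l$ is controlled by the $(X, Y)$ above. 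Then condition (4) forces $x \equiv y \equiv 0 \bmod l$ for some $l \mid L$, which feeds back into the valuation bookkeeping: $l^3 \mid x^3 + P^\iota y^3 = Lz^n$ with $\gcd(x,y,z)=1$ gives $v_l(z) = 0$, so $v_l(L) \ge 3$, and iterating the divisibility (replace $(x,y)$ by $(x/l, y/l)$, which multiplies the right side by $l^{-3}$) forces $v_l(L) \equiv 0 \bmod 3$ and in fact $v_l(L) \ge n$ after enough steps — contradicting $v_l(L) < n$ from condition (2). The main obstacle I expect is precisely this last bridging step: cleanly reducing \cref{condition} to a primitive solution of $x^3 + P^\iota y^3 = Lz^n$ while keeping track of all the unit and $3$-power ambiguities (this is where condition (3)'s dichotomy on $a \bmod 9$ and the factor $\rad{L}$ versus $L$ enter), and ensuring the descent on $v_l(L)$ terminates with the sharp bound $v_l(L) < n$ rather than merely $v_l(L) \not\equiv 0 \bmod 3$.
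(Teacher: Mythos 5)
Your overall architecture (argue by contradiction, establish pairwise coprimality of the factors, reduce to $X^3+aY^3=Lz^n$, invoke condition (4)) matches the paper's, but two of your steps fail as written. First, the final contradiction. Once you have a primitive triple with $X^3+aY^3=Lz^n$ and $X\equiv Y\equiv 0\bmod l$, your proposed descent (replace $(x,y)$ by $(x/l,y/l)$ and iterate to force $v_l(L)\geq n$) is invalid: after one step the equation becomes $x'^3+P^{\iota}y'^3=(L/l^{3})z^{n}$, and condition (4) says nothing about the equation with coefficient $L/l^{3}$, so it cannot be re-applied; nor does $v_l(L)\geq 3$ alone contradict $v_l(L)<n$ when $n\geq 8$. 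The correct (and much simpler) contradiction is to feed $X\equiv Y\equiv 0\bmod l$ back into the \emph{original} degree-$n$ equation: the left-hand side is a form of total degree $n$ in $X,Y$, hence divisible by $l^{n}$, and since $v_l(L)<n$ this forces $l\mid Z$, contradicting $\gcd(X,Y,Z)=1$.

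Second, the bridging step you flag as the main obstacle is under-justified and rests on a false premise: $X^3+aY^3$ is certainly not "built from primes $\equiv 2\bmod 3$" (e.g.\ $1^3+6\cdot 1^3=7$), so no congruence classification of the primes represented by the individual forms yields the needed coprimality. The hypotheses are designed instead for resultant-type identities: $b_j^3(X^3+aY^3)-(b_jX-c_jY)(b_j^2X^2+b_jc_jXY+c_j^2Y^2)=(ab_j^3+c_j^3)Y^3$ shows that a common prime divisor $q$ of $X^3+aY^3$ and a quadratic factor divides $ab_j^3+c_j^3$, hence $q\equiv 2\bmod 3$ by (1), and then anisotropy mod $q$ forces $q\mid c_j$ and $q\mid ab_j$, contradicting (1); similarly $b_0(X^3+aY^3)-(b_0X^3+\rad L\cdot c_0Y^3)=\pm 3^{k}Y^{3}$ shows a common divisor of the two cubic factors must be $3$ with $k\geq 1$, which (3) excludes via the condition on $a\bmod 9$. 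You also omit entirely the case where a prime $l\mid L$ divides $b_0X^3+\rad L\cdot c_0Y^3$ but \emph{not} $X^3+aY^3$; this is exactly where the hypothesis $v_l(L)\geq 2$ is needed (one gets $l\mid X$ from $\gcd(L,b_0)=1$, then reduces mod $l^{2}$ using $l\parallel\rad L$ to force $l\mid Y$). Finally, your conclusion that $X^3+aY^3=L'z'^{n}$ for some divisor $L'\mid L$ is too weak: condition (4) applies only to the equation with the full coefficient $L$, so you must show $L'=L$, which is what the coprimality statement $(\star)$ in the paper delivers.
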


\begin{proof}
We prove the assertion by contradiction.
Let $(X, Y, Z) \in \Z^{\oplus 3}$ be a triple satisfying \cref{condition}.
We may assume that $\gcd(X, Y, Z) = 1$.
It is sufficient to prove that
\begin{equation} \label{gcd_even}
	\gcd((X^{3}+aY^{3})L, (b_{0}X^{3}+\rad{L} \cdot c_{0}Y^{3})(b_{j}^{2}X^{2}+b_{j}c_{j}XY+c_{j}^{2}Y^{2})) = 1 \tag{$\star$}
	\quad \text{for every $j \geq 1$}.
\end{equation}
Indeed, if \cref{gcd_even} holds,
then we have some divisor $z$ of $Z$ satisfying $X^{3}+aY^{3}= Lz^{n}$.
Hence, by the assumption, we have $X \equiv Y \equiv 0 \bmod{l}$ for some prime divisor $l$ of $L$.
However, since $v_{l}(L) < n$, we also have $Z \equiv 0 \bmod{l}$,
which contradicts that $\gcd(X, Y, Z) = 1$.
In what follows, we prove \cref{gcd_even} by contradiction.

First, suppose that
a prime divisor $q$ of $X^{3}+aY^{3}$ divides $b_{j}^{2}X^{2}+b_{j}c_{j}XY+c_{j}^{2}Y^{2}$ for some $j \geq 1$.
Then, since $\gcd(X, Y, Z) = 1$ and $v_{q}(L) < n$, we see that $Y \not\equiv 0 \bmod{q}$.
On the other hand, since $q$ divides
\[
	b_{j}^{3}(X^{3}+aY^{3}) - (b_{j}X-c_{j}Y)(b_{j}^{2}X^{2}+b_{j}c_{j}XY+c_{j}^{2}Y^{2})
	= (ab_{j}^{3}+c_{j}^{3})Y^{3},
\]
we have $ab_{j}^{3}+c_{j}^{3} \equiv 0 \bmod{q}$, hence $q \equiv 2 \bmod{3}$.
In particular, the polynomial $b_{j}^{2}T^{2}+b_{j}c_{j}T+c_{j}^{2}$ is irreducible in $\Z_{q}[T]$.
Moreover, since $b_{j}^{2}X^{2}+b_{j}c_{j}XY+c_{j}^{2}Y^{2} \equiv 0 \bmod{q}$
and $Y \not\equiv 0 \bmod{q}$,
we have $c_{j} \equiv 0 \bmod{q}$, hence $ab_{j} \equiv 0 \bmod{q}$.
However, it contradicts the assumption that $\gcd(ab_{j}, c_{j}) = 1$.

Secondly, suppose that a prime divisor $l$ of $L$ divides
$b_{j}^{2}X^{2}+b_{j}c_{j}XY+c_{j}^{2}Y^{2}$ for some $j \geq 1$.
Then, since $l \equiv 2 \bmod{3}$ and $\gcd(L, b_{j}c_{j}) = 1$,
we have $X \equiv Y \equiv 0 \bmod{l}$.
However, since $v_{l}(L) < n$,
we see that $Z \equiv 0 \bmod{l}$,
which contradicts that $\gcd(X, Y, Z) = 1$.

Thirdly, suppose that a prime divisor $q$ of $X^{3}+aY^{3}$ divides $b_{0}X^{3}+ \rad{L} \cdot c_{0}Y^{3}$.
Then, since $\gcd(X, Y, Z) = 1$ and $v_{q}(L) < n$, we see that $Y \not\equiv 0 \bmod{q}$.
On the other hand, since $q$ divides
\[
	b_{0}(X^{3}+aY^{3}) - (b_{0}X^{3}+\rad L \cdot c_{0}Y^{3})
	= (ab_{0}-\rad L \cdot c_{0})Y^{3} = \pm3^{k}Y^{3},
\]
we have $q = 3$ and $k \geq 1$, hence $a \equiv \pm2, \pm4 \bmod{9}$.
However, $X^{3}+aY^{3} \equiv 0 \bmod{3}$ implies that $X \equiv aY \equiv 0 \bmod{3}$,
which contradicts that $Y \not\equiv 0 \bmod{3}$ and $a \equiv \pm2, \pm4 \bmod{9}$.

Finally, suppose that a prime divisor $l$ of $L$ divides $b_{0}X^{3}+ \rad{L} \cdot c_{0}Y^{3}$.
Then, since $\gcd(L, b_{0}) = 1$, we have $X \equiv 0 \bmod{l}$.
Since $v_{l}(L) \geq 2$, we have $(a \cdot \rad{L} \cdot c_{0}\prod_{j \geq 1} c_{j}^{2})Y^{n} \equiv 0 \bmod{l^{2}}$.
On the other hand, since $ab_{0}-\rad{L} \cdot c_{0} = \pm 3^{k}$, we have $\gcd(l, a) = 1$.
Moreover, since $\gcd(L, c_{j}) = 1$ for every $j \geq 0$,
we have $\gcd(l, ac_{0}\prod_{j \geq 1} c_{j}^{2}) = 1$ for every $j \geq 0$, hence $Y \equiv 0 \bmod{l}$.
However, since $v_{l}(L) < n$, we have $Z \equiv 0 \bmod{l}$,
which contradicts that $\gcd(X, Y, Z) = 1$.
This completes the proof.
\end{proof}

\section{Reduction to the Fermat type equations $X^{3}+P^{\iota}Y^{3} = l^{m}Z^{n}$}

Let $p$ be a prime number and
$u$ be a square-free integer prime to $p$.
Set $P = pu$.
Let $\pi = P^{1/3} \in \R$ be the real cubic root of $P$,
$K = \Q(\pi)$ be the pure cubic field generated by $\pi$,
$\mathcal{O}_{K}$ be the ring of integers in $K$,
and $\epsilon = \alpha+\beta\pi+\gamma\pi^{2} > 1$ be the fundamental unit of $K$
with $\alpha, \beta, \gamma \in (1/3)\Z$.
Note that the Galois closure of $K$ in the field $\C$ of complex numbers is $K(\zeta_{3})$,
where $\zeta_{3} \in \C$ is a fixed primitive cubic root of unity.
For basic properties of these objects, see e.g. \cite{Dedekind}.

In what follows, we assume that $P \not\equiv \pm1 \bmod{9}$.
Then, since we assume that $u$ is square-free, we have $\mathcal{O}_{K} = \Z[\pi]$ and so $\alpha, \beta, \gamma \in \Z$.

\begin{theorem} \label{Fermat_unify}
In the above setting, further suppose that
$\beta \equiv 0 \bmod{p}$ and the class number of $K$ is prime to $p$.
Let $n$ and $m$ be positive integers such that $n \geq 3$, $n \equiv 0 \bmod{p}$, and $m \not\equiv 0 \bmod{p}$.
Then, there exist infinitely many odd prime numbers $l$ such that
$l \equiv 2 \bmod{3}$ and for every primitive triple $(x, y, z) \in \Z^{\oplus 3}$ satisfying $x^{3}+P^{2}y^{3} = l^{m}z^{n}$,
we have $x \equiv y \equiv 0 \bmod{l}$.
\end{theorem}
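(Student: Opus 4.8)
The plan is to carry out a Fujiwara-style descent in the pure cubic field $K=\Q(\pi)$, $\pi=P^{1/3}$, refined so as to use the hypotheses $\beta\equiv 0\pmod{p}$ and $p\nmid h_{K}$; the infinitude of suitable primes $l$ will come from the Chebotarev density theorem (or, in keeping with the paper's methods, from \cref{HBM} applied to a cubic norm form of $K$). Throughout I require $l\equiv 2\pmod{3}$ and $l\nmid 6P$: the first condition guarantees $l\mathcal{O}_{K}=\mathfrak{p}_{1}\mathfrak{p}_{2}$ with $f(\mathfrak{p}_{1})=1$, $f(\mathfrak{p}_{2})=2$, and I write $\mathfrak{p}$ for the unique prime of $K$ above $p$, so $p\mathcal{O}_{K}=\mathfrak{p}^{3}$. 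The key identity is $x^{3}+P^{2}y^{3}=N_{K/\Q}(x+\pi^{2}y)=(x+\pi^{2}y)(x^{2}-\pi^{2}xy+P\pi y^{2})$. If $l$ divides exactly one of $x,y$, then reducing $x^{3}+P^{2}y^{3}=l^{m}z^{n}$ modulo $l$ is immediately contradictory, so it suffices to rule out a primitive triple with $l\nmid xy$. For such a triple one checks $\gcd(x,y)=1$ (a common factor of $x,y$ would have to be a power of $l$, hence $1$) and $p\nmid x$ (if $p\mid x$ then $p\nmid y$, so $v_{p}(x^{3}+P^{2}y^{3})=v_{p}(P^{2}y^{3})=2$, which is impossible since $v_{p}(l^{m}z^{n})=n\,v_{p}(z)$ is $0$ or at least $n\geq 3$).

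Next I would run the descent. A resultant computation with the two factors above shows that $\gcd\bigl((x+\pi^{2}y),(x^{2}-\pi^{2}xy+P\pi y^{2})\bigr)$ is supported only on primes above $3P$; since $f(\mathfrak{p}_{1})=1$ and $f(\mathfrak{p}_{2})=2$, a comparison of $\mathfrak{p}_{1}$- and $\mathfrak{p}_{2}$-valuations with the identity $N_{K/\Q}(x+\pi^{2}y)=l^{m}z^{n}$ forces $v_{\mathfrak{p}_{2}}(x+\pi^{2}y)=0$ and $v_{\mathfrak{p}_{1}}(x+\pi^{2}y)=M$, where $M:=m+n\,v_{l}(z)$. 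Using $p\nmid x$ together with the total ramification of $3$ and of the primes dividing $P$ in $K$ (which hold by the standing hypotheses), a further norm bookkeeping shows that at every prime of $K$ other than $\mathfrak{p}_{1}$ the ideal $(x+\pi^{2}y)$ has exponent a multiple of $n$; hence $(x+\pi^{2}y)=\mathfrak{p}_{1}^{M}\mathfrak{b}^{n}$ for an integral ideal $\mathfrak{b}$ prime to $\mathfrak{p}$, with $M\equiv m\pmod{n}$, so that $p\nmid M$ (here I use $p\mid n$ and $p\nmid m$). Raising to the power $h=h_{K}$ and using $\mathcal{O}_{K}^{\times}=\{\pm 1\}\times\langle\epsilon\rangle$, I obtain an equation of elements $(x+\pi^{2}y)^{h}=\pm\epsilon^{k}\lambda^{M}\rho^{n}$, where $(\lambda)=\mathfrak{p}_{1}^{h}$ and $(\rho)=\mathfrak{b}^{h}$, with $x,\lambda,\rho$ all prime to $\mathfrak{p}$.

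Now I would reduce this equation in $(\mathcal{O}_{K}/\mathfrak{p}^{2})^{\times}\cong\F_{p}^{\times}\times(1+\mathfrak{p}/\mathfrak{p}^{2})$ and project onto the second factor, which is annihilated by $p$. The left side is $\equiv x^{h}\pmod{\mathfrak{p}^{2}}$, a rational integer prime to $p$, so it projects to $0$. On the right, $\rho^{n}$ projects to $0$ because $p\mid n$; and $\epsilon\equiv\alpha\pmod{\mathfrak{p}^{2}}$ with $\alpha$ prime to $p$ --- this is precisely where $\beta\equiv 0\pmod{p}$ is used, since it gives $v_{\mathfrak{p}}(\beta\pi)\geq 2$ --- so $\pm\epsilon^{k}$ also projects to $0$. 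Hence $M$ times the image $c(\mathfrak{p}_{1})$ of $\lambda$ in $1+\mathfrak{p}/\mathfrak{p}^{2}\cong\Z/p$ vanishes, and since $p\nmid M$ we conclude $c(\mathfrak{p}_{1})=0$. The quantity $c(\mathfrak{p}_{1})$ depends only on the prime $\mathfrak{p}_{1}$: replacing $\lambda$ by another generator multiplies it by a unit, which projects to $0$. It therefore remains to exhibit infinitely many primes $l\equiv 2\pmod{3}$, $l\nmid 6P$, for which $c(\mathfrak{p}_{1})\neq 0$.

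For this, observe that the assignment sending a prime $\mathfrak{q}$ of $K$ prime to $6P$ to the image of a generator of $\mathfrak{q}^{h}$ in $1+\mathfrak{p}/\mathfrak{p}^{2}\cong\Z/p$ is $h$ times the Artin map of an abelian extension $K'/K$ with $\Gal(K'/K)\hookrightarrow\Z/p$, unramified outside $\mathfrak{p}$. This $K'/K$ is nontrivial precisely because the global units $\mathcal{O}_{K}^{\times}$ project trivially to $1+\mathfrak{p}/\mathfrak{p}^{2}$ --- once more a consequence of $\beta\equiv 0\pmod{p}$ --- and because $p\nmid h_{K}$, which both makes multiplication by $h$ invertible on $\Z/p$ and prevents the $p$-part of $\Cl(K)$ from absorbing this $\Z/p$. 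Applying the Chebotarev density theorem to the Galois closure over $\Q$ of the compositum $K'(\zeta_{3})$ --- or, thematically, applying \cref{HBM} to a norm form $N_{K/\Q}(x\xi_{1}+y\xi_{2}+\eta_{0})$ with $\xi_{1},\xi_{2},\eta_{0}\in\mathcal{O}_{K}$ and congruence conditions chosen so that $N_{K/\Q}(b\xi_{1}+c\xi_{2}+\eta_{0})$ is a prime $l\equiv 2\pmod{3}$ whose corresponding generator of $\mathfrak{p}_{1}$ has nonzero image in $1+\mathfrak{p}/\mathfrak{p}^{2}$ --- then produces infinitely many admissible $l$ (after discarding the finitely many dividing $6P$). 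I expect the main obstacle to be this last step: one must verify that $K'/K$ is genuinely nontrivial, i.e.\ that $\epsilon$ reduces into $\F_{p}^{\times}\subset(\mathcal{O}_{K}/\mathfrak{p}^{2})^{\times}$ (where $\beta\equiv 0\pmod{p}$ re-enters) and that neither the $h$-th power map nor the class group collapses the relevant $\Z/p$ (where $p\nmid h_{K}$ re-enters), and that the condition $c(\mathfrak{p}_{1})\neq 0$ is compatible with $l\equiv 2\pmod{3}$. One should also note that the whole argument degenerates when $p\nmid\beta$, matching the introduction's remark about $\Q(2^{1/3})$; the remaining ingredients --- the resultant bound on the gcd, the norm bookkeeping producing $M=m+n\,v_{l}(z)$, and the verification that $p\nmid x$, together with the degenerate cases $x=0$ and $z=0$ --- are routine.
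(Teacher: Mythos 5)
Your proposal is correct in substance and its core is the same Fujiwara-type descent that the paper runs inside $K=\Q(\pi)$ (this is \cref{Fujiwara_Fermat_unify} with $(\iota,\nu)=(2,1)$): factor $x+\pi^{2}y$, show via the conjugate-gcd and primitivity arguments that $(x+\pi^{2}y)=\mathfrak{p}_{1}^{M}\mathfrak{b}^{n}$ with $\mathfrak{b}$ prime to $3P$ and $p\nmid M$, kill the units at the ramified prime $\mathfrak{p}$ above $p$ using $\beta\equiv 0\bmod p$, and get a contradiction from the coefficient of $\pi$. Where you genuinely diverge is in the two bookkeeping layers around this descent. First, instead of the paper's explicit element identity $x+y\pi^{2}=\pm\epsilon^{k}(a+b\pi)^{m}w^{p^{\nu}}$ reduced modulo $p$ (which needs the mildly fiddly step that $\mathfrak{w}^{n/p^{\nu}}$ is principal because $p\nmid h_{K}$), you raise the ideal equation to the $h_{K}$-th power and read everything in $(\mathcal{O}_{K}/\mathfrak{p}^{2})^{\times}$; since $\mathfrak{p}^{2}=(p,\pi^{2})$ and $\mathcal{O}_{K}=\Z[\pi]$, your projection to $1+\mathfrak{p}/\mathfrak{p}^{2}\cong\Z/p$ records exactly the ratio ``$\pi$-coefficient over constant term'' mod $p$, so your invariant $c(\mathfrak{p}_{1})$ is precisely the quantity the paper controls by demanding $p\nmid b$ for the generator $a+b\pi$ of $\mathfrak{p}_{1}$. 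Second, you produce the good primes $l$ (those with $c(\mathfrak{p}_{1})\neq 0$, compatible with $l\equiv 2\bmod 3$) by ray-class-field theory and Chebotarev applied to the compositum of the degree-$p$ extension $K'/K$ cut out by the conductor-$\mathfrak{p}^{2}$ ray class group with $K(\zeta_{3})$; the nontriviality and linear-disjointness checks you flag do go through (units die in $1+\mathfrak{p}/\mathfrak{p}^{2}$ because $\beta\equiv 0\bmod p$, the $\Z/p$ survives because $p\nmid h_{K}$, and $K'\neq K(\zeta_{3})$ by degree or ramification). The paper instead applies \cref{HBM} to $h(A,B)=(3pA-1)^{3}+P(3pB+3)^{3}$ (resp.\ the $p=3$ variant) to get primes $l=a^{3}+Pb^{3}\equiv 2\bmod 3$ with $p\nmid b$ outright; the paper's own footnote in \S4 explicitly notes that your class-field-theoretic route is an admissible substitute here. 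The trade-off: the paper's choice keeps the construction algorithmic and explicit (it is what feeds the extra coprimality conditions $\gcd(l,Pb_{j}c_{j})=1$ in the proof of \cref{main_even} and the numerical examples of \S6), while your packaging is structurally cleaner, avoids invoking the deep Heath-Brown--Moroz theorem for this step, and isolates exactly where $\beta\equiv 0\bmod p$ and $p\nmid h_{K}$ are used. If you write it up, do spell out the ``norm bookkeeping'' at primes dividing $z$ (the split, inert, and totally ramified cases treated separately, the last via primitivity), since that is where the hypotheses $P\not\equiv\pm 1\bmod 9$ and $n\geq 3$ actually enter.
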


We will prove \cref{Fermat_unify} in the next section.
Here, we prove \cref{main_even,main_unify} by using \cref{Fermat_unify}.
We start from the former.

\begin{proof} [Proof of \cref{main_even} under \cref{Fermat_unify}]
We prove the assertion by taking desired parameters so that the conditions in \cref{recipe_even} hold.
Note that since every step is constructive,
we obtain an algorithm to produce arbitrarily many non-singular plane curves violating the local-global principle
up to generation of the prime numbers $l$ claimed in \cref{Fermat_unify}.
An algorithm to produce these prime numbers $l$ is explained in the top of the next section.

Let $u$ be an odd square-free integer such that
$P = 2u \not\equiv \pm 1 \bmod{9}$, $\beta \equiv 0 \bmod{2}$, and the class number of $K = \Q(P^{1/3})$ is odd.
First, we consider the case $u \not\equiv 0 \bmod{3}$, i.e., $P \equiv \pm 2, \pm4 \bmod{9}$.

Let $f(X, Y) = P^{2}(3X+1)^{3} + (3Y+1)^{3}$ and $g(X, Y) = P^{2}(3X-1)^{3} + (3Y)^{3}$.
Then, since $\gcd(f(0, 0), f(0, -1)) = 1$ and $\gcd(g(0, 0), g(0, 1)) = 1$,
we have $\gcd(f(\Z^{\oplus 2})) = 1$ and $\gcd(g(\Z^{\oplus 2})) = 1$ respectively.
By \cref{HBM},
there exist infinitely many distinct prime numbers of the form
$q = f(B, C)$ (resp.\ $q = g(B, C)$) with $(B, C) \in \Z^{\oplus 2}$ and prime to $P$.
Note that all of them satisfy $q \equiv 2 \bmod{3}$.
Among such prime numbers $q$,
take distinct $(n-6)/2$ prime numbers $q_{j} = f(B_{j}, C_{j}) \ \text{or} \ g(B_{j}, C_{j})$
with $(B_{j}, C_{j}) \in \Z^{\oplus 2}$ ($1 \leq j \leq (n-6)/2$)
so that $\gcd(3, \sum_{j} b_{j}^{-1}c_{j}) = 1$,
where $(b_{j}, c_{j}) := (3B_{j}+1, 3C_{j}+1) \ \text{or} \ (3B_{j}-1, 3C_{j})$
according to whether $q_{j} = f(B_{j}, C_{j}) \ \text{or} \ g(B_{j}, C_{j})$.

For each $(n-6)/2$-tuple $((b_{j}, c_{j}))_{1 \leq j \leq (n-6)/2}$ taken as above,
we have infinitely many prime numbers $l$ satisfying the properties claimed in \cref{Fermat_unify}
with an additional condition that $\gcd(l, Pb_{j}c_{j}) = 1$ for every $j \geq 1$.
We fix such $l$ arbitrarily.

Let $Q$ be the product of the prime divisors $q$ of $P$ such that $q \equiv 2 \bmod{3}$.
Then, since $\gcd(l, 3P) = 1$,
there exist infinitely many pairs $(b, c_{0}) \in \Z^{\oplus 2}$
such that $\gcd(l, c_{0}) = 1$ and
\[
	3P^{2}Qb-lc_{0} = \pm3^{k} + P^{2}, \quad \text{i.e.,} \quad P^{2}(-1+3Qb)-lc_{0} = \pm3^{k}
\]
with some $k \geq 0$.
Take such a pair $(b, c_{0})$ arbitrarily and set $b_{0} = -1+3Qb$.
Then, since $l \equiv 2 \bmod{3}$ and $m$ is odd,
we have $b_{0} \prod_{j \geq 1} b_{j}^{2} \equiv 2 \equiv l^{m} \bmod{3}$ and $\gcd(Q, b_{0}) = 1$.
Moreover, we see that $l \neq 2$ and $\gcd(l, b_{0}c_{0}) = 1$.
Therefore, \cref{recipe_even} implies that the equation
\[
	(X^{3}+P^{2}Y^{3})(b_{0}X^{3}+lc_{0}Y^{3}) \prod_{j = 1}^{\frac{n-6}{2}}(b_{j}^{2}X^{2} + b_{j}c_{j}XY + c_{j}^{2}Y^{2}) = l^{m} Z^{n}
\]
violates the local-global principle.

The non-singularity is a consequence of the following two facts:
\begin{enumerate}
\item
Since $q_{j}$ and $q_{k}$ are distinct prime numbers,
$[b_{j} : c_{j}] \neq [b_{k} : c_{k}]$ for any distinct $j, k \geq 1$.

\item
Since $\gcd(l, b_{0}c_{0}) = 1$ and all of $l, b_{0}, c_{0}$ are odd,
two polynomials $X^{3}+P^{2}Y^{3}$ and $b_{0}X^{3}+lc_{0}Y^{3}$ are both irreducible in $\Q[X, Y]$
and cannot have a common root in $\C$.
\end{enumerate}

The infinitude of the geometric isomorphy classes of plane curves follows from Schwarz's theorem on the finiteness of the automorphism group of a non-singular algebraic curve of genus $\geq 2$.
For the detail, see \cite[Lemma 4.1]{Hirakawa-Shimizu_arXiv}.
This completes the proof in the case $u \not\equiv 0 \bmod{3}$.

Finally, if $u \equiv 0 \bmod{3}$,
then we take $f(X, Y) = P^{2}(3X+1)^{3} + (3Y+1)^{3}$ and $g(X, Y) = P^{2}(3X-1)^{3} + (3Y+1)^{3}$
so that we can take $(b_{j}, c_{j})$ such that $\sum_{j \geq 1} b_{j}^{-1}c_{j} \not\equiv 0 \bmod{3}$.
In this case, $b_{0} = -1+3Qb$ and $c_{0} \in \Z$ with $P^{2}b_{0} - lc_{0} = \pm1$ works well.
This completes the proof.
\end{proof}

Next, we prove \cref{main_unify} by using \cref{Fermat_unify}.
Here, recall the following proposition,
which is a counterpart of \cref{recipe_even}
and reduces \cref{main_unify}
to the generation of prime numbers of the form $X^{3}+P^{2}Y^{3}$
and a certain Fermat type equation $x^{3}+P^{2}y^{3} = Lz^{n}$.

\begin{proposition} [{cf.\ \cite[Proposition 2.1]{Hirakawa-Shimizu_arXiv}}] \label{recipe_odd}
Let $n$ be an odd integer such that $n \geq 5$,
$p$ be a prime number,
and $P = p \ \text{or} \ 2p$,
$\iota = 1 \ \text{or} \ 2$.
Let $b_{j}, c_{j}, L$ $(1 \leq j \leq (n-3)/2)$ be integers satisfying the following conditions:
\begin{enumerate}
\item
For every $j$, $P^{\iota}b_{j}^{3}+c_{j}^{3}$ is a prime number $\equiv 2 \bmod{3}$ and prime to $P$.

\item
$\gcd(L, b_{j}c_{j}) = 1$ for every $j$.
Moreover, for every prime divisor $l$ of $L$, $l \equiv 2 \bmod{3}$ and $v_{l}(L) < n$.

\item
For every prime divisor $q$ of $P$ such that $q \equiv 2 \bmod{3}$ (only $q = 2, p$ are possible),
we have $L \equiv \prod_{j} b_{j}^{2} \not\equiv 0 \bmod{q}$ and $\sum_{j} b_{j}^{-1}c_{j} \not\equiv 0 \bmod{q}$.

\item
If $P \not\equiv \pm1 \bmod{9}$,
then $L \equiv \prod_{j} b_{j}^{2} \not\equiv 0 \bmod{3}$
and $\sum_{j} b_{j}^{-1}c_{j} \not\equiv 0 \bmod{3}$.

\item
For every primitive triple $(x, y, z)  \in \Z^{\oplus 3}$ satisfying $x^{3}+P^{\iota}y^{3} = Lz^{n}$,
there exists a prime divisor $l$ of $L$ such that $x \equiv y \equiv 0 \bmod{l}$.
\end{enumerate}
Then, the equation 
\[
	(X^{3}+P^{\iota}Y^{3})\prod_{j = 1}^{\frac{n-3}{2}}(b_{j}^{2}X^{2}+b_{j}c_{j}XY+c_{j}^{2}Y^{2}) = LZ^{n}
\]
violates the local-global principle.
\end{proposition}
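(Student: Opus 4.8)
The plan is to prove \cref{recipe_odd} in the same two-step fashion as the even-degree proposition \cref{recipe_even}: first verify local solubility at every place, then prove global insolubility by reducing to the two-variable Fermat-type equation so that hypothesis~(5) can be fed in. Structurally this is the odd-degree counterpart of \cref{local_even,global_even} with the binary cubic factor $b_{0}X^{3}+\rad{L}\cdot c_{0}Y^{3}$ removed (and $a=P^{\iota}$ throughout); it is essentially \cite[Proposition 2.1]{Hirakawa-Shimizu_arXiv}, which I would restate and reprove for completeness following Fujiwara \cite{Fujiwara}. Note first that condition~(1) forces $b_{j}c_{j}\neq0$ for every $j$ (a prime value $P^{\iota}b_{j}^{3}+c_{j}^{3}$ cannot be a perfect cube $c_{j}^{3}$ or a multiple $P^{\iota}b_{j}^{3}$), so each binary quadratic factor has negative discriminant $-3b_{j}^{2}c_{j}^{2}$, and conditions~(1)–(2) give $\gcd(P^{\iota}b_{j},c_{j})=1$ for every $j$.

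For local solubility, write $F(X,Y,Z)=(X^{3}+P^{\iota}Y^{3})\prod_{j=1}^{(n-3)/2}(b_{j}^{2}X^{2}+b_{j}c_{j}XY+c_{j}^{2}Y^{2})-LZ^{n}$. Over $\R$ there is a point because $n$ is odd, so $Z\mapsto LZ^{n}$ is onto $\R$ while the left factor is positive at $[X:Y]=[1:0]$. For a prime $v$ with $\gcd(v,3P)=1$, the splitting field of $(X^{3}+P^{\iota}Y^{3})(b_{1}^{2}X^{2}+b_{1}c_{1}XY+c_{1}^{2}Y^{2})$ is an $S_{3}$-extension of $\Q$ unramified outside $3P$ whose quadratic subfield is $\Q(\zeta_{3})$; hence the Frobenius at $v$ is a transposition when $v\equiv2\bmod3$ (so $X^{3}+P^{\iota}Y^{3}$ acquires a linear factor over $\Q_{v}$) and lies in $A_{3}$ when $v\equiv1\bmod3$ (so $\Q(\zeta_{3})\subset\Q_{v}$ and each binary quadratic factor splits over $\Q_{v}$); either way $F$ has a $\Q_{v}$-point with $Z=0$. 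For the finitely many remaining primes $q$ — only $q\mid P$ with $q\equiv2\bmod3$, and $q=3$ if $P\not\equiv\pm1\bmod9$ — set $X=Z=1$ and solve $G(Y):=(1+P^{\iota}Y^{3})\prod_{j}(b_{j}^{2}+b_{j}c_{j}Y+c_{j}^{2}Y^{2})=L$ for $Y\in\Z_{q}$ by Hensel's lemma: conditions~(3) and (4) give $G(0)=\prod_{j}b_{j}^{2}\equiv L\bmod q$ and $G'(0)=\bigl(\prod_{j}b_{j}^{2}\bigr)\sum_{j}b_{j}^{-1}c_{j}\not\equiv0\bmod q$, which suffices even when $q\mid n$ (we never differentiate in $Z$), and at $q=3$ the valuations still fit Hensel's criterion since $v_{3}(G'(0))=0$. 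Alternatively one may invoke \cite[Lemma 2.2]{Hirakawa-Shimizu_arXiv} verbatim for these primes.

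For global insolubility, suppose $(X,Y,Z)\in\Z^{\oplus3}\setminus\{(0,0,0)\}$ satisfies the equation. Since $b_{j}c_{j}\neq0$ the factors have no common rational zero and $L\neq0$, so $Z\neq0$; we may assume $\gcd(X,Y,Z)=1$. The crux is
\[
	\gcd\Bigl((X^{3}+P^{\iota}Y^{3})L,\ \prod_{j=1}^{(n-3)/2}(b_{j}^{2}X^{2}+b_{j}c_{j}XY+c_{j}^{2}Y^{2})\Bigr)=1,
\]
proved by contradiction in two cases exactly as in \cref{global_even}. If a prime $q$ divides $X^{3}+P^{\iota}Y^{3}$ and some quadratic factor, then $q\nmid Y$ (otherwise $q\mid X$ as well, whence $q^{3}$ divides $X^{3}+P^{\iota}Y^{3}$ and $q^{2}$ divides every quadratic factor, so $q^{n}\mid LZ^{n}$ and, since $v_{q}(L)<n$, $q\mid Z$, contradicting primitivity); the identity $b_{j}^{3}(X^{3}+P^{\iota}Y^{3})-(b_{j}X-c_{j}Y)(b_{j}^{2}X^{2}+b_{j}c_{j}XY+c_{j}^{2}Y^{2})=(P^{\iota}b_{j}^{3}+c_{j}^{3})Y^{3}$ with condition~(1) gives $q\equiv2\bmod3$, so $b_{j}^{2}T^{2}+b_{j}c_{j}T+c_{j}^{2}$ is irreducible over $\Q_{q}$ with $q\nmid b_{j}c_{j}$, forcing $q\mid c_{j}$ and hence $q\mid P^{\iota}b_{j}$, against $\gcd(P^{\iota}b_{j},c_{j})=1$. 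If instead a prime $l\mid L$ divides some quadratic factor, then $l\equiv2\bmod3$ and $\gcd(L,b_{j}c_{j})=1$ force $X\equiv Y\equiv0\bmod l$, whence $l^{n}\mid LZ^{n}$ and $l\mid Z$, again impossible. Granting the coprimality, comparing $l$-adic valuations on the two sides of the defining equation shows $\prod_{j}(b_{j}^{2}X^{2}+b_{j}c_{j}XY+c_{j}^{2}Y^{2})=w^{n}$ for some $w\mid Z$ ($n$ odd absorbs the sign), so $X^{3}+P^{\iota}Y^{3}=L(Z/w)^{n}$ with $(X,Y,Z/w)$ primitive. Hypothesis~(5) now yields a prime $l\mid L$ with $X\equiv Y\equiv0\bmod l$; then $l^{3}\mid X^{3}+P^{\iota}Y^{3}$ and $l^{2}$ divides each of the $(n-3)/2$ quadratic factors, so $l^{n}\mid LZ^{n}$, and $v_{l}(L)<n$ forces $l\mid Z$, contradicting $\gcd(X,Y,Z)=1$. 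Hence the equation has no nontrivial rational point, completing the proof.

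I expect the only genuinely delicate point to be the local solubility at $q=3$ when $3\mid P$ (e.g.\ $P=6$), where $3$ ramifies in $\Q(P^{1/3})$ and one must check that the Hensel lift of $[1:Y:1]$ really exists; this is precisely what condition~(4) is engineered for, and in the worst case I would simply cite \cite[Lemma 2.2]{Hirakawa-Shimizu_arXiv}. Everything else is the elementary valuation bookkeeping above together with Fujiwara's $S_{3}$-Galois argument.
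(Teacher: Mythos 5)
Your proposal is correct and takes essentially the same approach as the paper, which for this statement simply cites \cite[Proposition 2.1]{Hirakawa-Shimizu_arXiv}: your two-step argument is the odd-degree mirror of \cref{local_even,global_even} (Fujiwara's $S_{3}$/Frobenius argument away from $3P$ plus a Hensel lift of $[1:0:1]$ at the bad primes using conditions (3)--(4), then the gcd--valuation reduction to $x^{3}+P^{\iota}y^{3}=Lz^{n}$ so that condition (5) applies). The only cases your local analysis leaves implicit --- primes $q\mid P$ with $q\equiv 1\bmod 3$, and $q=3$ when $P\equiv\pm1\bmod 9$ --- are immediate since there $\zeta_{3}\in\Q_{q}$ (resp.\ $-P^{\iota}$ is a cube in $\Q_{3}$), so a quadratic factor (resp.\ the cubic factor) vanishes at a point with $Z=0$; this matches the level of detail of the paper's own even-degree proof.
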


\begin{proof} [Proof of \cref{main_unify} under \cref{Fermat_unify}]
The proof is almost parallel to the proof of \cref{main_even}.
We prove the assertion by taking desired parameters so that the conditions in \cref{recipe_odd} hold.
In this time, we do not need to take extra parameters $b_{0}$ and $c_{0}$.

Let $p$ be an odd prime number.
Take $u = 2 \ \text{or} \ 1$, i.e., $P = 2p \ \text{or} \ p$ so that $P \not\equiv \pm1 \bmod{9}$.
Thanks to \cref{Hirakawa-Shimizu} (see also \cref{outline}),
it is sufficient to consider the case where $p \neq 3$ and $\beta \equiv 0 \bmod{p}$.
Note that since the class numbers of $\Q(p^{1/3})$ and $\Q((2p)^{1/3})$ are prime to $p$ (cf.\ \cite[Lemma 3.4]{Hirakawa-Shimizu_arXiv}),
we can actually apply \cref{Fermat_unify}.


Let $f(X, Y) = P^{2}(3PX+1)^{3} + (3PY+1)^{3}$ and $g(X, Y) = P^{2}(3PX-1)^{3} + (3PY+3)^{3}$.
Then, since $\gcd(f(0, 0), f(0, 1), f(0, -1)) = 1$ and $\gcd(g(0, 0), g(0, 1), g(0, -1)) = 1$,
we have $\gcd(f(\Z^{\oplus 2})) = 1$ and $\gcd(g(\Z^{\oplus 2})) = 1$ respectively.
By \cref{HBM},
there exist infinitely many distinct prime numbers of the form
$q = f(B, C)$ (resp.\ $q = g(B, C)$) with $(B, C) \in \Z^{\oplus 2}$ and prime to $P$.
Note that all of them satisfy $q \equiv 2 \bmod{3}$.
Among such prime numbers $q$, take distinct $(n-3)/2$ prime numbers $q_{j} = f(B_{j}, C_{j}) \ \text{or} \ g(B_{j}, C_{j})$
with $(B_{j}, C_{j}) \in \Z^{\oplus 2}$ ($1 \leq j \leq (n-6)/2$) so that $\gcd(3p, \sum_{j} b_{j}^{-1}c_{j}) = 1$,
where $(b_{j}, c_{j}) := (3PB_{j}+1, 3PC_{j}+1) \ \text{or} \ (3PB_{j}-1, 3PC_{j}+3)$
according to whether $q_{j} = f(B_{j}, C_{j}) \ \text{or} \ g(B_{j}, C_{j})$.
Here, note that since $n$ is odd,
$\sum_{j} b_{j}^{-1}c_{j}$ is odd whenever $P$ is even.

For  each $(n-3)/2$-tuple $((b_{j}, c_{j}))_{1 \leq j \leq (n-3)/2}$ taken as above,
we have infinitely many prime numbers $l$ satisfying the properties claimed in \cref{Fermat_unify}
with an additional condition that $\gcd(l, 2Pb_{j}c_{j}) = 1$ for every $j$.
We fix such $l$ arbitrarily.
Then, since both $l$ and $p$ are odd and $b_{j} \equiv \pm1 \bmod{3P}$ for every $j$,
we see that $l^{p-1} \equiv \prod_{j \geq 1} b_{j}^{2} \equiv 1 \bmod{3P}$.
Therefore, \cref{recipe_odd} implies that the equation
\[
	(X^{3}+P^{2}Y^{3}) \prod_{j = 1}^{\frac{n-6}{2}}(b_{j}^{2}X^{2} + b_{j}c_{j}XY + c_{j}^{2}Y^{2}) = l^{p-1} Z^{n}
\]
violates the local-global principle.

The non-singularity is a consequence of the following two facts:
\begin{enumerate}
\item
Since $q_{j}$ and $q_{k}$ are distinct prime numbers,
$[b_{j} : c_{j}] \neq [b_{k} : c_{k}]$ for any distinct $j, k$.

\item
Since $X^{3}+P^{2}Y^{3}$ is irreducible in $\Q[X, Y]$, it cannot have a multiple root nor a common root with $b_{j}^{2}X^{2} + b_{j}c_{j}XY + c_{j}^{2}Y^{2}$ for any $j$.
\end{enumerate}

The infinitude of the geometric isomorphy classes of plane curves follows from Schwarz's theorem on the finiteness of the automorphism group of a non-singular algebraic curve of genus $\geq 2$.
For the detail, see \cite[Lemma 4.1]{Hirakawa-Shimizu_arXiv}.
This completes the proof.
\end{proof}

\section{Proof of \cref{Fermat_unify}}

In order to prove \cref{Fermat_unify}, we again use \cref{HBM}.
\footnote{
	Here, the use of \cref{HBM} is not essential
	unlike the situation in the proof of \cref{main_even} (resp.\ \cref{main_unify}) under \cref{Fermat_unify}.
	In fact, the constraint for $(a, b, c) \in \Z^{\oplus 3}$ in \cref{Fujiwara_Fermat_unify} is relatively weak.
	In particular, it is sufficient for the proof of \cref{Fermat_unify}
	that there exist infinitely many prime numbers of the form
	$l = N_{K/\Q}(a+b\pi+c\pi^{2})$ with $b \not\equiv 0 \bmod{p}$,
	which we can verify by the ring class field theory and the Chebotarev density theorem
	(cf.\ \cite[Ch.\ V]{Janusz_text} and \cite{Lv-Deng}).
	}
Let $p$ be a prime number satisfying the conditions in \cref{Fermat_unify}.
Let  $h(A, B) = (3A-1)^{3}+P(3B+1)^{3}$ or $h(A, B) = (3pA-1)^{3}+P(3pB+3)^{3}$ according to $p = 3$ or not.
\footnote{
	If $p = 2$ and $u$ has no prime divisors of the form $q \equiv 2 \bmod{3}$,
	then $h(A, B) = (6A+1)^{3}+P(6B-1)^{3}$ also works.
	More generally, if we want to obtain concrete examples with small coefficients,
	we can easily modify the polynomials generating the prime numbers with desired properties.
	}
Then, since $\gcd(h(0, 0), h(1, 0), h(-1, 0)) = 1$,
we have $\gcd(h(\Z^{\oplus 2})) = 1$.
Therefore, \cref{HBM} implies that
there exist infinitely many prime numbers $l$ of the form
\[
	l = a^{3}+Pb^{3} \equiv 2 \bmod{3}
	\quad \text{with} \quad
	(a, b) = \begin{cases}
	(3A-1, 3B+1) & \text{if $p = 3$} \\
	(3pA-1, 3pB+3) & \text{if $p \neq 3$}
	\end{cases}
\]
Thus, \cref{Fermat_unify} is obtained from the case of $(\iota, \nu) = (2, 1)$ in the following proposition.

\begin{proposition} \label{Fujiwara_Fermat_unify}
Let $(\iota, \nu) = (1, 2), (2, 1)$.
Let $p$ be a prime number and $u$ be a square-free positive integer prime to $p$.
Set $P = pu$ and $\pi = P^{1/3} \in\R$.
Suppose that $P \not\equiv \pm 1 \bmod{9}$,
the fundamental unit $\epsilon = \alpha+\beta\pi+\gamma\pi^{2}$ of $K = \Q(\pi)$
with $\alpha, \beta, \gamma \in \Z$ satisfies $\beta \equiv 0 \bmod{p}$,
and the class number of $K$ is prime to $p$.
If $(\iota, \nu) = (1, 2)$, then further assume that $\gamma \equiv 0 \bmod{p}$.
Let $l$ be a prime number such that $\gcd(l, P) = 1$ and $l \equiv 2 \bmod{3}$.
Assume that there exist $a+b\pi+c\pi^{2} \in \mathcal{O}_{K}$
with $a, b, c \in \Z$ and $m \in \Z_{\geq 1}$ satisfying the following conditions:
\begin{enumerate}
\item
	$l = N_{K/\Q}(a+b\pi+c\pi^{2}) \ (= a^{3}+b^{3}P+c^{3}P^{2}-3abcP)$.

\item
	\begin{enumerate}
	\item
	If $(\iota, \nu) = (1, 2)$, then $\binom{m}{2}b^{2}+mac$ is prime to $p$.
	\footnote{In fact, the condition $\nu = 2$ is too strict for odd $p$,
	and $p^{\nu} \geq 3$ is sufficient.}
	
	\item
	If $(\iota, \nu) = (2, 1)$, then $mb$ is prime to $p$.
	\end{enumerate}
\end{enumerate}
Then, for every integer $n \geq 3$ divisible by $p^{\nu}$
and every primitive triple $(x, y, z)  \in \Z^{\oplus 3}$ satisfying $x^{3}+P^{\iota}y^{3} = l^{m}z^{n}$,
we have $x \equiv y \equiv 0 \bmod{l}$.
\end{proposition}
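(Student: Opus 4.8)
The plan is to run Fujiwara's descent \cite{Fujiwara} in the pure cubic field $K = \Q(\pi)$, parallel to \cite[\S3]{Hirakawa-Shimizu_arXiv}. Fix a primitive triple $(x,y,z)$ with $x^{3}+P^{\iota}y^{3} = l^{m}z^{n}$ and suppose, aiming at a contradiction, that $l \nmid x$ or $l \nmid y$. First I would dispose of degenerate cases: if $y = 0$ then $x^{3} = l^{m}z^{n}$ with $\gcd(x,z) = 1$ forces $z = \pm1$ and $l \mid x$; and if a prime $q \neq l$ divided $\gcd(x,y)$ then $q^{3} \mid l^{m}z^{n}$ would give $q \mid z$, against primitivity, so we may assume $y \neq 0$ and $\gcd(x,y) = 1$. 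Next I would check that $\xi := x + \pi^{\iota}y \in \mathcal{O}_{K}$ is coprime to $3P$: for a prime $q \mid P$, if $q \mid x$ then $q \mid z$, and comparing $q$-adic valuations is impossible since $v_{q}(x^{3}) \geq 3$, $v_{q}(P^{\iota}y^{3}) = \iota \leq 2$ (as $u$ is square-free and prime to $p$, so $v_{q}(P) = 1$), while $v_{q}(l^{m}z^{n}) \geq n \geq 3$; and if $3 \mid z$ then $x^{3}+P^{\iota}y^{3} \equiv 0$ modulo $9$ (modulo $27$ when $3 \mid P$), which against the list $\{0,\pm1\}$ of cubes mod $9$ forces $3 \mid \gcd(x,y,z)$ once $P \not\equiv \pm1 \bmod 9$. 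In particular $p \nmid x$, and $p \nmid a$ since $N_{K/\Q}(\lambda) = l$ is prime to $p$.

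Then I would factor $(\xi)$ in the Galois closure $L = K(\zeta_{3})$. Since $\xi$ is coprime to $3P$, its three $\Gal(L/\Q(\zeta_{3}))$-conjugates, whose product is $x^{3}+P^{\iota}y^{3}$, are pairwise coprime in $\mathcal{O}_{L}$. Because $l \equiv 2 \bmod 3$ and $l = N_{K/\Q}(\lambda)$ is prime, $l$ is unramified in $K$ with $l\mathcal{O}_{K} = \mathfrak{l}\,\mathfrak{q}$, where $\mathfrak{l} = (\lambda)$ has residue degree $1$ and $\mathfrak{q}$ has residue degree $2$; correspondingly $l\mathcal{O}_{L} = \mathfrak{L}_{1}\mathfrak{L}_{2}\mathfrak{L}_{3}$ with $\mathfrak{l}\mathcal{O}_{L} = \mathfrak{L}_{1}$, $\mathfrak{q}\mathcal{O}_{L} = \mathfrak{L}_{2}\mathfrak{L}_{3}$, on which $\Gal(L/\Q(\zeta_{3}))$ acts transitively. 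Pairwise coprimality forces exactly one of the conjugates of $\xi$ to be divisible by each $\mathfrak{L}_{i}$; this in turn shows $\mathfrak{q} \nmid (\xi)$, $\mathfrak{l} \mid (\xi)$ (else $(\xi)$ is coprime to $l$, impossible as $N_{K/\Q}(\xi) = l^{m}z^{n}$), and that the prime-to-$l$ part of $(\xi)$ is an $n$-th power of an ideal, while a norm count gives $v_{\mathfrak{l}}(\xi) = m + n\,v_{l}(z)$. Hence $(\xi) = (\lambda)^{m}\mathfrak{e}^{n}$ for an integral ideal $\mathfrak{e}$. Since $p \nmid h := \#\Cl(K)$ while $p^{\nu} \mid n$, the order $g$ of $[\mathfrak{e}]$ divides $\gcd(n,h)$ and is prime to $p$, so $\mathfrak{e}^{g} = (\delta)$ and $N := n/g$ is divisible by $p^{\nu}$; as $\mathcal{O}_{K}^{\times} = \{\pm1\} \times \langle\epsilon\rangle$, this gives $x + \pi^{\iota}y = \pm\epsilon^{k}\lambda^{m}\delta^{N}$ for some $k \in \Z$.

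Finally I would reduce this identity modulo $p$ in $\mathcal{O}_{K}/(p) \cong \F_{p}[\pi]/(\pi^{3})$. The coefficient of $\pi^{3-\iota}$ in $\xi$ vanishes; on the right, $\beta \equiv 0 \bmod p$ (together with $\gamma \equiv 0 \bmod p$ when $\iota = 1$) kills the coefficient of $\pi^{3-\iota}$ in $\epsilon^{k}$, and since $p^{\nu} \mid N$ the power $\delta^{N}$ also has vanishing coefficient of $\pi^{3-\iota}$ modulo $p$ --- the delicate point being $p = 2$, where $\delta^{2} \bmod 2$ still has no $\pi^{1}$-coefficient and $\delta^{4} \bmod 2$ is constant, which is precisely why $\nu = 2$ is needed when $\iota = 1$. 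Expanding $\lambda^{m} = (a+b\pi+c\pi^{2})^{m}$, comparing the coefficient of $\pi^{3-\iota}$ on the two sides, and cancelling the units $a$ and $\pm\epsilon^{k}\delta^{N} \bmod \mathfrak{p}$ then yields $mb \equiv 0 \bmod p$ when $(\iota,\nu) = (2,1)$ and $\binom{m}{2}b^{2} + mac \equiv 0 \bmod p$ when $(\iota,\nu) = (1,2)$, each of which contradicts hypothesis (2). I expect the ideal-theoretic bookkeeping in the middle step and the $p = 2$ case of the final congruence to be the main obstacles; the remaining arguments are routine.
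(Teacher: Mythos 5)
Your proposal is correct and follows essentially the same route as the paper's proof: factor $\xi = x+y\pi^{\iota}$ in $\mathcal{O}_{K}$ using the splitting of $l \equiv 2 \bmod 3$ and the coprimality of $\xi$ to $3P$ to get $(\xi) = (\lambda)^{m}\mathfrak{w}^{n}$, invoke the class number prime to $p$ and the unit group to write $\xi = \pm\epsilon^{k}\lambda^{m}\delta^{N}$ with $p^{\nu} \mid N$, and then compare the coefficient of $\pi^{3-\iota}$ in $\F_{p}[\pi]/(\pi^{3})$ using $\beta \equiv 0 \bmod p$ (and $\gamma \equiv 0$ when $\iota = 1$) together with the Frobenius to contradict hypothesis (2). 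Your treatment of the class-group step (via the order $g$ of $[\mathfrak{e}]$ dividing $\gcd(n,h)$) and of the $p=2$ subtlety is in fact slightly more explicit than the paper's, but it is the same argument.
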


\begin{proof}
The proof is almost parallel to the proof of \cite[Theorem 3.1]{Hirakawa-Shimizu_arXiv} (cf.\ \cref{release}).
We prove the assertion by contradiction.
Suppose that there exists a primitive triple $(x, y, z) \in \Z^{\oplus 3}$
such that $x^{3}+P^{\iota}y^{3} = l^{m}z^{n}$, and either $x$ or $y$ is prime to $l$.

First, note that
since either $x$ or $y$ is prime to $l$ and $\gcd(l, P) = 1$,
$x^{2}-xy\pi^{\iota}+y^{2}\pi^{2\iota}$ cannot be divisible by $l$.
Moreover,
$l \equiv 2 \bmod{3}$ splits in $K$ to the product of two prime ideals
$\mathfrak{p}_{l}$ and $\mathfrak{p}_{l^{2}} $ of norms $l$ and $l^{2}$ respectively.
Suppose that $x+y\pi$ is divisible by $\mathfrak{p}_{l^{2}}$.
Then, the product of its conjugates
$(x+\zeta_{3} y\pi^{\iota})(x+\zeta_{3}^{2} y\pi^{\iota})
= x^{2}-xy\pi^{\iota}+y^{2}\pi^{2\iota}$
is divisible by $l$, a contradiction
(cf.\ the following argument for $q \equiv 2 \bmod{3}$).
Therefore, $x^{2}-xy\pi^{\iota}+y^{2}\pi^{2\iota}$ is divisible by $\mathfrak{p}_{l^{2}}^{m}$
but not divisible by $\mathfrak{p}_{l}$.
Accordingly,
$x+y\pi^{\iota}$ is divisible by $\mathfrak{p}_{l}^{m}$ but not divisible by $\mathfrak{p}_{l^{2}}$.

Next, suppose that $x+y\pi^{\iota}$ is divisible by a prime ideal above a prime divisor $q$ of $z$.
Then, since $P$ is square-free, $\iota < 3$, and $(x, y, z)$ is primitive,
we see that $\gcd(q, P) = 1$.
Moreover,
both of $x+y\pi^{\iota}$ and $x^{2}-xy\pi^{\iota}+y^{2}\pi^{2\iota}$ are not divisible by $q$ itself
because if either $x^{3}+P^{\iota}y^{3}$ is divisible by $q$ itself,
then we have $x \equiv y \equiv 0 \bmod{q}$, which contradicts that $(x, y, z)$ is primitive.
On the other hand, since $P \not\equiv \pm1 \bmod{9}$,
the possible decomposition types of $q$ in $K$ are as follows:
\begin{enumerate}
\item
	$(q) = \mathfrak{p}_{q, 1} \mathfrak{p}_{q, 2} \mathfrak{p}_{q, 3}$,
	i.e., $q \equiv 1 \bmod{3}$ and $P \bmod{q} \in \mathbb{F}_{q}^{\times 3}$.
	
\item
	$(q) = \mathfrak{p}_{q} \mathfrak{p}_{q^{2}}$, i.e., $q \equiv 2 \bmod{3}$ and $\gcd(q, P) = 1$.

\item
	$(q) = \mathfrak{p}_{q}^{3}$, i.e., $q = 3$.
\end{enumerate}
In each case, we have the following conclusion:
\begin{enumerate}
\item
	If $x+y\pi^{\iota}$ is divisible by
	distinct two prime ideals above $q$, say $\mathfrak{p}_{q, 1}$ and $\mathfrak{p}_{q, 2}$, 
	then $x^{2}-xy\pi^{\iota}+y^{2}\pi^{2\iota}$
	is divisible by $(\mathfrak{p}_{q, 1}\mathfrak{p}_{q, 3}) \cdot (\mathfrak{p}_{q, 2}\mathfrak{p}_{q, 3})$,
	hence by $q$, a contradiction.
	Therefore, we may assume that $x+y\pi^{\iota}$ is divisible by $\mathfrak{p}_{q, 1}^{nv_{q}(z)}$
	but not by $\mathfrak{p}_{q, 2}$ nor $\mathfrak{p}_{q, 3}$
	by replacing $\mathfrak{p}_{q, 1}, \mathfrak{p}_{q, 2}, \mathfrak{p}_{q, 3}$ to each other if necessary.
	
\item
	In this case,
	$q$ is decomposed in $K(\zeta_{3})$ so that
	$\mathfrak{p}_{q} = \mathfrak{P}_{q^{2}, 1}$ and
	$\mathfrak{p}_{q^{2}} = \mathfrak{P}_{q^{2}, 2}\mathfrak{P}_{q^{2}, 3}$.
	If $x+y\pi^{\iota}$ is divisible by $\mathfrak{p}_{q^{2}}$,
	then	$x^{2}-xy\pi^{\iota}+y^{2}\pi^{2\iota}$ is divisible by
	$(\mathfrak{P}_{q^{2}, 1}\mathfrak{P}_{q^{2}, 2}) \cdot (\mathfrak{P}_{q^{2}, 1}\mathfrak{P}_{q^{2}, 3})$,
	hence by $q$, a contradiction.
	Therefore, $x+y\pi^{\iota}$ is divisible by $\mathfrak{p}_{q}^{nv_{q}(z)}$
	but not by $\mathfrak{p}_{q^{2}}$.

\item
	In this case, since $x^{3}+P^{\iota}y^{3}$ is divisible by $\mathfrak{p}_{3}^{3n}$,
	$x+y\pi^{\iota}$ is divisible by $\mathfrak{p}_{3}^{n}$.
	Since $n \geq 3$, $x+\pi^{\iota}y$ is divisible by $3$.
	Moreover, since $P$ is square-free and $\iota < 3$, $\pi^{\iota}$ cannot be divisible by $3$.
	Hence, both $x$ and $y$ are divisible by $3$,
	which contradicts that $(x, y, z)$ is primitive.
\end{enumerate}

As a consequence, we see that
there exists an integral ideal $\mathfrak{w}$ of $\mathcal{O}_{K}$ such that
\[
	(x+y\pi^{\iota})
	= \mathfrak{p}_{l}^{m}\mathfrak{w}^{n} \quad
	\text{and} \quad
	(\mathfrak{w}, 3P) = 1.
\]
Then, since the first assumption implies that $\mathfrak{p}_{l}$ is generated by $a+b\pi+c\pi^{2}$,
$\mathfrak{w}^{n}$ is a principal ideal.
Moreover, since we assume that the class number of $K$ is prime to $p$,
the ideal $\mathfrak{w}^{n/p^{\nu}}$ itself is also generated by a single element
$w_{0}+w_{1}\pi+w_{2}\pi^{2} \in \mathcal{O}_{K}$ with $w_{0}, w_{1}, w_{2} \in \Z$.
Therefore, there exists $k \in \Z$ such that
\[
	x+y\pi^{\iota}
	= \epsilon^{k}(a+b\pi+c\pi^{2})^{m}(w_{0}+w_{1}\pi+w_{2}\pi^{2})^{p^{\nu}}.
\]
Since we assume that $\beta \equiv 0 \bmod{p}$,
we have
\[
	x+y\pi^{\iota}
	\equiv (\alpha+k\gamma\pi^{2})a^{m-2}\left( a^{2}+mab\pi+\left( \binom{m}{2}b^{2}+mac \right)\pi^{2} \right)(w_{0}+w_{1}\pi^{p^{\nu}}) \bmod{p}.
\]
Here, note that since $\gcd(l, P) = 1$, $a \not\equiv 0 \bmod{p}$.
Furthermore, since $\gcd(\mathfrak{w}, P) = 1$, $w_{0} \not\equiv 0 \bmod{p}$.

If $(\iota, \nu) = (1, 2)$,
then since we assume that $\gamma \equiv 0 \bmod{p}$,
the above congruence between the coefficients of $\pi^{2}$
contradicts the assumption.

If $(\iota, \nu) = (2, 1)$,
then the above congruence between the coefficients of $\pi$
contradicts the assumption.
This completes the proof.
\end{proof}

\begin{remark} \label{release}
It should be noted that a key ingredient in the previous work \cite[Theorem 3.1]{Hirakawa-Shimizu_arXiv} is a counterpart of \cref{Fujiwara_Fermat_unify} in the cases $(\iota, \nu) = (1, 1), (2, 2)$.
This restriction on $(\iota, \nu)$ binds \cref{Hirakawa-Shimizu} with the assumption that the degree $n$ is divisible by $p^{\iota}$.
In this time, we are released from this strong assumption thanks to \cref{Fujiwara_Fermat_unify}.
\end{remark}

\begin{remark}[Outline of the proof \cref{Hirakawa-Shimizu} in the case $\beta \not\equiv 0 \bmod{p}$] \label{outline}
For the completeness,
we explain an outline of the proof \cref{Hirakawa-Shimizu} in the case $\beta \not\equiv 0 \bmod{p}$,
which was essentially given in \cite{Hirakawa-Shimizu_arXiv}.
The idea is similar to the case $\beta \equiv 0 \bmod{p}$ as given in the previous section.
In the case $\beta \not\equiv 0 \bmod{p}$,
we produce the parameters $(b_{j}, c_{j})$ $(1 \leq j \leq (n-3)/2)$ by using appropriate polynomials e.g.
$f(X, Y) = P(3PX\pm1)^{3} + (3PY+1)^{3}$ and $g(X, Y) = P(3PX\mp1)^{3} + (3PY+3)^{3}$
according to $P \equiv \pm 1 \bmod{3}$.
The crucial point appears in the difference between the proofs of \cite[Theorem 3.1]{Hirakawa-Shimizu_arXiv} and \cref{Fujiwara_Fermat_unify}:
In the former,
in order to deduce a contradiction from the modulo $p$ comparison of the coefficients of $\pi^{2}$,
it is sufficient that the coefficient $l^{m} = N_{K/\Q}((a+b\pi+c\pi^{2})^{m})$ of $z^{n}$ satisfies additional technical conditions that
$b \equiv 0 \bmod{p}$ and a curious quantity
\[
	\left( \frac{\beta}{2\alpha}-\frac{\gamma}{\beta} \right)^{2} - \frac{2c}{a} \cdot m
\]
is not a quadratic residue modulo $p$ (cf.\ \cite[Lemma 3.5]{Hirakawa-Shimizu_arXiv}).
This condition is verified by generating prime numbers of the form
e.g. $l = h(A, B) = (3PA+1)^{3}+P^{2}(3PC+1)$ and considering sufficiently many even integers $m$ in the range $1 \leq m \leq p-1$ (cf.\ effective P\'olya-Vinogradov inequalities as obtained in \cite{Pomerance}).
Here, we take even $m$ in order to ensure the $3$-adic and $p$-adic solubility (cf.\ \cref{recipe_odd}).
Similarly, if $p = 3$, we can produce $(b_{j}, c_{j})$ $(1 \leq j \leq (n-3)/2)$ by e.g.
$f(X, Y) = P^{\iota}(PX+1)^{3}+(PY-1)^{3}$ and $g(X, Y) = P^{\iota}(PX-1)^{3}+(PY-1)^{3}$
and generate $l$ by e.g. $h(A, C) = (6A-1)^{3}+P^{2\iota}(6C+1)$.
\end{remark}

\begin{remark}
If $p = 2$,
then $\binom{m}{2}b^{2}+mac \equiv \binom{m}{2}b^{2}+mc \bmod{2}$,
and it is prime to $p = 2$ if and only if one of the following conditions holds:
\begin{enumerate}
\item
$m \equiv 1 \bmod{4}$ and $c$ is odd.

\item
$m \equiv 2 \bmod{4}$ and $b$ is odd.

\item
$m \equiv 3 \bmod{4}$ and $b+c$ is odd.
\end{enumerate}
\end{remark}

\begin{remark} \label{infinite_cubic_forms}
It seems plausible that
there exist infinitely many odd integers $u$ such that the class numbers of the cubic fields $\Q((2u)^{1/3})$ are odd:
In fact,
the class number of $\Q((2u)^{1/3})$ is odd
if at least one of the $2$-Selmer groups of the two elliptic curves $E^{(\pm u)}$ defined by $y^{2} = x^{3} \pm 2u$ is trivial (cf.\ \cite{Li_2-Selmer}).
Note also that the class number of $\Q((2u)^{1/3})$ is odd
if and only if the class number of its Galois closure $\Q((2u)^{1/3}, \zeta_{3})$ is odd (cf.\ \cite{Reich}).
For related topics, see e.g. \cite{Ho-Shankar-Varma,Taniguchi-Thorne}.
\end{remark}

\section{A variant of \cref{main_even}}

In \S3, we proved \cref{Fermat_unify} from the case $(\iota, \nu) = (2, 1)$ in \cref{Fujiwara_Fermat_unify} with $p = 2$.
It is obvious that
we can prove a counterpart of \cref{Fermat_unify} from the other case $(\iota, \nu) = (1, 2)$.
For instance, we obtain the following corollary
by applying the exactly same argument as the proof of \cref{Fermat_unify} with $p = 2$
(with the same polynomial $h(A, B) = (6A-1)^{3}+P(6B+3)^{3}$).

\begin{corollary} \label{Fermat_deg4}
Let $u$ be an odd square-free integer such that $u \not\equiv \pm4 \bmod{9}$.
Set $P = 2u$.
Let
$\pi = P^{1/3} \in \R$ be the real cubic root of $P$,
$K = \Q(\pi)$ be the pure cubic field generated by $\pi$,
and $\epsilon = \alpha+\beta\pi+\gamma\pi^{2}$ be the fundamental unit of $K$
with $\alpha, \beta, \gamma \in \Z$.
Suppose that both $\beta$ and $\gamma$ are even and the class number of $K$ is odd.
Then, there exist infinitely many odd prime numbers $l \equiv 2 \bmod{3}$ such that
there exist no primitive solution of $x^{3}+Py^{3} = l^{2}z^{4}$.
\end{corollary}

Moreover,
by combining \cref{Fermat_deg4} and appropriate prime generating polynomials,
we can prove a variant of \cref{main_even}.
For instance,
if $u \equiv \pm 1 \bmod{3}$,
then by taking $f(X, Y) = P(3X \mp 1)^{3} + (3Y+1)^{3}$ and $g(X, Y) = P(3X \pm 1)^{3} + (3Y)^{3}$
in place of $f$ and $g$ in the proof of \cref{main_even}
and $b_{0} = 1+3Qb$ in place of $-1+3Qb$ so that $Pb_{0} - lc_{0} = \pm 3^{k}$ with some $k \geq 0$,
we obtain the following.

\begin{theorem} \label{main_deg4}
Let $n$ be an integer such that $n \geq 8$ and $n \equiv 0 \bmod{4}$,
and $P$ be as in \cref{Fermat_deg4}.
Then, there exist
infinitely many $(n-6)/2$-tuples of pairs of integers $(b_{j}, c_{j})$ $(1 \leq j \leq (n-6)/2)$
satisfying the following condition:

There exist infinitely many prime numbers $l$ and infinitely many pairs of integers $(b_{0}, c_{0})$ such that
the equation
\begin{equation} \label{equation_deg4}
	(X^{3}+PY^{3})(b_{0}X^{3}+lc_{0}Y^{3}) \prod_{j = 1}^{(n-6)/2} (b_{j}^{2}X^{2}+b_{j}c_{j}XY+c_{j}^{2}Y^{2}) = l^{2}Z^{n}
\end{equation}
define non-singular plane curves which violate the local-global principle.

Moreover, for each $n$,
there exists a set of such $(n-4)/2$-tuples $((b_{j}, c_{j}))_{0 \leq j \leq (n-6)/2}$
which gives infinitely many geometrically non-isomorphic classes of such curves.
\end{theorem}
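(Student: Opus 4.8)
The plan is to run the proof of \cref{main_even} with the exponent $\iota=1$ in place of $\iota=2$, invoking \cref{Fermat_deg4} wherever that proof used \cref{Fermat_unify}, and taking $L=l^{2}$ (so that $\rad L=l$ and \cref{equation_even} becomes \cref{equation_deg4}). Concretely, I will exhibit integers $b_{j},c_{j}$ $(0\le j\le (n-6)/2)$ and a prime $l$ for which all six hypotheses of \cref{recipe_even} hold with $\iota=1$ and $L=l^{2}$; the local-global violation of \cref{equation_deg4} will then follow at once from \cref{recipe_even}.

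The only genuinely new point concerns hypothesis~(6) of \cref{recipe_even}. Since $4\mid n$, every primitive triple $(x,y,z)\in\Z^{\oplus 3}$ with $x^{3}+Py^{3}=l^{2}z^{n}$ yields the primitive triple $(x,y,z^{n/4})$ with $x^{3}+Py^{3}=l^{2}(z^{n/4})^{4}$, which is excluded by \cref{Fermat_deg4}; hence for the primes $l$ supplied by \cref{Fermat_deg4} there is no such primitive triple at all and (6) holds vacuously. Hypothesis~(2) holds because $v_{l}(l^{2})=2$ with $2\le 2<n$ and $l$ is odd with $l\equiv 2\bmod 3$. For the remaining parameters I will follow the proof of \cref{main_even} with the substitutions indicated just before the present statement: when $u\equiv\pm1\bmod 3$, applying \cref{HBM} to $f(X,Y)=P(3X\mp1)^{3}+(3Y+1)^{3}$ and $g(X,Y)=P(3X\pm1)^{3}+(3Y)^{3}$ produces $(n-6)/2$ primes $q_{j}=Pb_{j}^{3}+c_{j}^{3}\equiv 2\bmod 3$, prime to $P$, with $[b_{j}:c_{j}]$ pairwise distinct and $\sum_{j\ge1}b_{j}^{-1}c_{j}\not\equiv 0\bmod 3$, securing (1), (4), (5) for $j\ge1$. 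I will then fix $l$ from \cref{Fermat_deg4} subject to $\gcd(l,P\prod_{j}b_{j}c_{j})=1$, put $Q=\prod_{q\mid P,\ q\equiv 2\bmod 3}q$ and $b_{0}=1+3Qb$, and choose $b$ and $c_{0}$ so that $Pb_{0}-lc_{0}=\pm3^{k}$ (with $k=0$ forced when $3\mid u$, since then $P\not\equiv\pm2,\pm4\bmod 9$). This is possible for infinitely many $(b,c_{0})$ because $\gcd(l,3PQ)=1$; one gets $\gcd(l,b_{0})=1$ automatically, and a suitable $b$ also yields $\gcd(l,c_{0})=1$ together with $b_{0}\prod_{j\ge1}b_{j}^{2}\equiv1\equiv l^{2}\bmod 3$. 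This gives hypotheses (2)--(5) with $\iota=1$, so \cref{recipe_even} applies.

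Non-singularity will follow exactly as in \cref{main_even}: distinctness of the $q_{j}$ forces the $[b_{j}:c_{j}]$ to be distinct, and $X^{3}+PY^{3}$ and $b_{0}X^{3}+lc_{0}Y^{3}$ are both irreducible over $\Q$ (all of $l,b_{0},c_{0}$ odd, $\gcd(l,b_{0}c_{0})=1$, $Pb_{0}-lc_{0}\ne0$) with no common complex root; the infinitude of geometric isomorphy classes is again \cite[Lemma 4.1]{Hirakawa-Shimizu_arXiv}. The case $u\equiv0\bmod 3$ is disposed of by the same modification of the forms $f,g$ and the choice $k=0$ as in the last paragraph of the proof of \cref{main_even}. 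I expect the only real difficulty to be the bookkeeping: checking that the primes $l$ delivered by \cref{Fermat_deg4} can be chosen compatibly with the finitely many coprimality and mod-$3$ constraints of \cref{recipe_even}, and that the congruence conditions (1) and (5) remain simultaneously satisfiable for a judicious choice of the prime-generating forms $f,g$ --- the very same delicate point, with the same remedy, as in the proof of \cref{main_even}.
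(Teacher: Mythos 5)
Your proposal is correct and follows essentially the same route as the paper, which proves \cref{main_deg4} exactly by rerunning the proof of \cref{main_even} through \cref{recipe_even} with $\iota=1$, $L=l^{2}$, the modified forms $f(X,Y)=P(3X\mp1)^{3}+(3Y+1)^{3}$, $g(X,Y)=P(3X\pm1)^{3}+(3Y)^{3}$, and $b_{0}=1+3Qb$, invoking \cref{Fermat_deg4} for the Fermat-type input. Your only extra touch --- deducing hypothesis (6) for general $n\equiv 0\bmod 4$ from the exponent-$4$ case via the substitution $z\mapsto z^{n/4}$ --- is a harmless repackaging of what the paper gets directly from \cref{Fujiwara_Fermat_unify} with $(\iota,\nu)=(1,2)$ and $m=2$.
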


\section{Numerical examples of degree 8}

In this section, we demonstrate that
\cref{main_even,main_deg4} actually give explicit equations of non-singular plane curve which violate the local-global principle.
We emphasize that our construction has a character in contrast to examples obtained by Nguyen in \cite{Nguyen_QJM,Nguyen_Tokyo} (cf.\ \cref{Nguyen_QJM,Nguyen_Tokyo}).
Recall that Nguyen constructed infinitely many plane curves of even degree which violate the local-global principle
but explained by the Brauer-Manin obstruction on certain hyperelliptic curves covered by the plane curves.
Here, we give two numerical examples for \cref{main_even,main_deg4} respectively,
both of which are $\Z/4\Z$-coverings of hyperellitpic curves with $\Q$-rational points.
This means that the violation of the local-global principle for the former plane curves cannot be explained
by the Brauer-Manin obstruction on the latter hyperelliptic curves.

\subsection{Example for \cref{main_even} with $u = 7$}

First, we construct an example for \cref{main_even} in the case of $n = 8$ and $u = 7$.
As a special value of $f(X, Y) = 14^{2}(3X+1)^{3}+(3Y+1)^{3}$,
we obtain a prime number $197 = 14^{2} \cdot 1^{3} + 1^{3}$ with $(b_{1}, c_{1}) = (1, 1)$.
Moreover, as a special value of $h(A, B) = (6A+1)^{3}+14(6B-1)^{3}$,
we obtain another prime number $l = 419 = (-11)^{3}+14 \cdot 5^{3}$,
which is prime to $\{ P, b_{1}, c_{1} \}$.
Finally, in order to generate the coefficients $b_{0} = -1+6b$ and $c_{0}$,
we solve the equation
\[
	14^{2}(-1+6b) - 419c_{0} = \pm3^{k}.
\]
It has solutions $(b_{0}, c_{0}, \pm3^{k}) = (365, 13^2, 3^{6})$ with square $c_{0}$.
Therefore,
for every $m = 3, 5, 7$, the equation
\[
	(X^{3}+14^{2}Y^{3})(365X^{3}+419 \cdot 13^{2}Y^{3})(X^{2}+XY+Y^{2}) = 419^{m}Z^{8}
\]
defines a non-singular plane curve which violates the local-global principle.
However, its quotient by the automorphism $Z \to \zeta_{4}Z$
gives a hyperelliptic curve defined by
\[
	(X^{3}+14^{2}Y^{3})(365X^{3}+419 \cdot 13^{2}Y^{3})(X^{2}+XY+Y^{2}) = 419^{m}Z^{2}
\]
which has a $\Q$-rational point $[X : Y : Z] = [0 : 1 : 13 \cdot 14/419^{(m-1)/2}]$.

\subsection{Example for \cref{main_deg4} with $u = 79$}

Next, we construct an example for \cref{main_deg4} again for $n = 8$ and $u = 79$.
\footnote{
	There are many positive integers satisfying the whole of the conditions in \cref{main_deg4},
	say $u = 21, 35, 39, 79, 89, \dots$.
	Among them, $u = 79$ is the minimal one having no prime divisors $q = 3$ or $q \equiv 2 \bmod{3}$.
	Recall that, due to \cref{recipe_even},
	if $u$ has a prime divisor $q = 3$,
	then we need a congruence $Pb_{0}-lc_{0} = \pm 1$ stronger than $Pb_{0}-lc_{0} = \pm 3^{k}$.
	On the other hand,
	due to the proof of \cref{main_deg4} (cf.\ the proof of \cref{main_even} in \S3),
	if $u$ has a prime divisor $q \equiv 2 \bmod{3}$,
	then we need a congruence $b_{0} \equiv 1 \bmod{6q}$ stronger than $b_{0} \equiv 1 \bmod{6}$. 
	}
As a special value of $f(X, Y) = 158(3X-1)^{3}+(3Y+1)^{3}$,
we obtain a prime number $19751 = 158 \cdot 5^{3} + 1^{3}$ with $(b_{1}, c_{1}) = (5, 1)$.
Moreover, as a special value of $h(A, B) = (6A+1)^{3}+158(6B-1)^{3}$,
we obtain another prime number $l = 4919 = (-59)^{3}+158 \cdot 11^{3}$,
which is prime to $\{ P, b_{1}, c_{1} \}$.
Then, by solving the equation
\[
	158(1+6b) - 4919c_{0} = \pm3^{k}
\]	
with $b_{0} = 1+6b$,
we obtain a solution $(b_{0}, c_{0}, \pm3^{k}) = (271^{2}, 2359, -3^{5})$ with square $b_{0}$.
Therefore,
the equation
\[
	(X^{3}+158Y^{3})(271^{2}X^{3}+4919 \cdot 2359Y^{3})(5^{2}X^{2}+5XY+Y^{2}) = 4919^{2}Z^{8}
\]
defines a non-singular plane curve which violates the local-global principle.
However, its quotient by the automorphism $Z \to \zeta_{4}Z$
gives a hyperelliptic curve defined by
\[
	(X^{3}+158Y^{3})(271^{2}X^{3}+4919 \cdot 2359Y^{3})(5^{2}X^{2}+5XY+Y^{2}) = 4919^{2}Z^{2},
\]
which again has a $\Q$-rational point $[X : Y : Z] = [1 : 0 : 5 \cdot 271/4919]$.

\section{Appendix: Non-singularity of some Nguyen's sextic curves}

In this appendix,
we give the following proposition,
which is implicit in Nguyen's original article \cite{Nguyen_QJM}.

\begin{proposition} [{Non-singularity of some curves in \cref{Nguyen_QJM}}] \label{Nguyen_sextic}
Let $n = 2k$ be an even integer with $k \geq 1$.
Take $p, d, m, \text{and} \ \alpha$ as follows:
\begin{enumerate}
\item
$p$ is a prime number such that $p \equiv 1 \bmod{8}$.

\item
$d$ is an integer which is a quadratic non-residue in $\F_{p}^{\times}$ and prime to $n$.

\item
$m$ is an even integer such that $q := d^{2}+pm^{2}$ is a prime number.

\item
$\alpha$ is a rational number such that
$\alpha \in \Z_{l}$ for every prime divisor $l$ of $dp$
and $\alpha \neq 0, qp^{-k}, qd^{-k}, (m(d+p)-2q)((dp)^{k}-d^{k}-p^{k})^{-1}$.
\end{enumerate}
Set
$A = q-\alpha p^{k}$,
$B = q-\alpha d^{k}$,
and $C = m(d+p)-2q-\alpha((dp)^{k}-d^{k}-p^{k})$.
Let $\mathcal{C}^{(k)}_{p, d, m, \alpha}$ be a plane curve of degree $4k+2$
defined by the following homogeneous equation over $\Q$
\begin{align*}
	& pq^{2}X^{4k+2} + Y^{4k-2}(d(d+p)X^{2}-qY^{2})(pm^{2}(d+p)X^{2}-dqY^{2}) \\
	&\quad - Z^{2}(AX^{2k} + BY^{2k} + CX^{k}Y^{k} + \alpha Z^{2k})^{2} = 0.
\end{align*}
Suppose that there exists a prime divisor $l$ of $d$
such that $l \geq 5$ and $\alpha \equiv -m/3 \bmod{l}$.
Then, the curve $\mathcal{C}^{(1)}_{p, d, m, \alpha}$ is geometrically non-singular.
\end{proposition}

The above \cref{Nguyen_sextic} is a special case of the following \cref{general}.
Indeed, in the setting of \cref{Nguyen_sextic},
we have $\gcd(d, 2pqm(d+p)B\alpha) = 1$,
$2A+C \equiv -\alpha p^{1} + mp \equiv 4mp/3 \bmod{d}$,
and $2A-4B-C \equiv -3\alpha p^{1}-mp \equiv 0 \bmod{l}$ for some prime divisor $l$ of $d$.

\begin{proposition} \label{general}
Let $s, t,u, v, w, A, B, C, D \in \Z$.
Define a plane curve
$\mathcal{C}^{(k)} = \mathcal{C}^{(k)}_{s, t,u, v, w, A, B, C, D }$
of degree $4k+2$ by the following homogeneous equation
\[
	F(X, Y, Z)
	= sX^{4k+2} + Y^{4k-2}(tX^{2}+uY^{2})(vX^{2}+wY^{2})
	- Z^{2}(AX^{2k}+BY^{2k}+CX^{k}Y^{k}+DZ^{2k})^{2} = 0.
\]
Suppose that $w \neq 0$ and there exists a common prime divisor $l$ of $t$ and $w$
such that $l \nmid 6ksuv(2A+C)BD$ and $l \mid 2A-4B-C$.
Then, the curve $\mathcal{C}^{(1)}$ is geometrically non-singular.
\end{proposition}

Note that $t$ may be 0 but $s, u, v, w$ cannot be 0.

\begin{proof}
We prove it by contradiction.
Suppose that $[X : Y : Z] = [X_{0} : Y_{0} : Z_{0}]$ is a singular point on $\mathcal{C}$.
Then, we may assume that $X_{0}, Y_{0}, Z_{0} \in \overline{\Q}$.
Let $l$ be a common prime divisor of $t$ and $w$ in the assertion,
and $\lambda$ be a prime ideal of the number field $\Q(X_{0}, Y_{0}, Z_{0})$ lying above $l$.
Then, by dividing $X_{0}, Y_{0}, Z_{0}$
by an element $W \in \lambda \setminus \lambda^{2}$ if necessary,
we may assume that
$X_{0}, Y_{0}, Z_{0}$ are all $\lambda$-adic integers
and at least one of them is prime to $\lambda$.

Since
\begin{align*}
	\frac{\partial F}{\partial Z}
	&= -2Z(AX^{2k}+BY^{2k}+CX^{k}Y^{k}+DZ^{2k})^{2}
		- 2kDZ^{2k+1}(AX^{2k}+BY^{2k}+CX^{k}Y^{k}+DZ^{2k}) \\
	&= -2Z(AX^{2k}+BY^{2k}+CX^{k}Y^{k}+DZ^{2k})
		(AX^{2k}+BY^{2k}+CX^{k}Y^{k}+DZ^{2k} + 2kDZ^{2k}),
\end{align*}
the condition $(\partial F/\partial Z)(X_{0}, Y_{0}, Z_{0}) = 0$ implies that
\[
	Z_{0}(AX_{0}^{2k}+BY_{0}^{2k}+CX_{0}^{k}Y_{0}^{k}+DZ_{0}^{2k}) = 0
	\quad \text{or} \quad
	AX_{0}^{2k}+BY_{0}^{2k}+CX_{0}^{k}Y_{0}^{k}+DZ_{0}^{2k} = -2kDZ_{0}^{2k}.
\]

\begin{enumerate}
\item
First, suppose that $g := Z_{0}(AX_{0}^{2k}+BY_{0}^{2k}+CX_{0}^{k}Y_{0}^{k}+DZ_{0}^{2k}) = 0$.
Then, we have
\begin{align*}
	F(X_{0}, Y_{0}, Z_{0})
	&= sX_{0}^{4k+2} + Y_{0}^{4k-2}(tX_{0}^{2}+uY_{0}^{2})(vX_{0}^{2}+wY_{0}^{2}) \\
	&= sX_{0}^{4k+2} + Y_{0}^{4k-2}(tvX_{0}^{4}+(tw+uv)X_{0}^{2}Y_{0}^{2}+uwY_{0}^{4}) = 0.
\end{align*}
This shows that if $X_{0} = 0$, then since we assume that $u, w \neq 0$, we must have $Y_{0} = 0$.
However, the condition $g = 0$ with the assumption $l \nmid D$ (hence $D \neq 0$) implies that $Z_{0} = 0$, a contradiction.
Thus, $X_{0} \neq 0$.

Moreover, since $g = 0$,
we have
\[
	\frac{\partial F}{\partial X}(X_{0}, Y_{0}, Z_{0})
	= (4k+2)sX_{0}^{4k+1} + Y_{0}^{4k-2}(4tvX_{0}^{3} + (2tw+2uv)X_{0}Y_{0}^{2})
	= 0,
\]
hence
\begin{align*}
	&\quad (4k+2)F(X_{0}, Y_{0}, Z_{0}) - X_{0}\frac{\partial F}{\partial X}(X_{0}, Y_{0}, Z_{0}) \\
	&= Y_{0}^{4k-2}((4k-2)tvX_{0}^{4} + 4k(tw+uv)X_{0}^{2}Y_{0}^{2} + (4k+2)uwY_{0}^4)
		= 0.
\end{align*}

Now,
by taking into account of the conditions $X_{0} \neq 0$ and $t \equiv w \equiv 0 \bmod{l}$,
we have the following.
\[
\begin{cases}
	X_{0}^{-2}F(X_{0}, Y_{0}, Z_{0})
	\equiv sX_{0}^{4k} + uvY_{0}^{4k} \equiv 0 \bmod{\lambda}, \\
	(4k+2)F(X_{0}, Y_{0}, Z_{0}) - X_{0}\frac{\partial F}{\partial X}(X_{0}, Y_{0}, Z_{0})
	\equiv 4kuvX_{0}^{2}Y_{0}^{4k} \equiv 0 \bmod{\lambda}. \\
\end{cases}	
\]
Since we assume that $l \nmid 2kuv$,
the second congruence implies that
either $X_{0} \equiv 0 \bmod{\lambda}$ or $Y_{0} \equiv 0 \bmod{\lambda}$.
Moreover, since we assume that  $l \nmid suv$p
the first congruence implies that $X_{0} \equiv Y_{0} \equiv 0 \bmod{\lambda}$.
However, since we assume that $l \nmid D$,
the condition $g = 0$ implies that $Z_{0} \equiv 0 \bmod{\lambda}$, a contradiction.
This completes the proof under the condition $g = 0$ but for general $k$.
\footnote{
	In this step, we used the following assumptions:
	$w \neq 0$ (note that $l \mid w$) and 
	$l \nmid 2ksuvD$ for some common prime divisor $l$ of $t$ and $w$.
	}

\item
Next, suppose that $g \neq 0$,
and $h := AX_{0}^{2k}+BY_{0}^{2k}+CX_{0}^{k}Y_{0}^{k}+(2k+1)D = 0$.
Then, we have
\begin{align*}
	F(X_{0}, Y_{0}, Z_{0})
	&= sX_{0}^{4k+2} + Y_{0}^{4k-2}(tX_{0}^{2}+uY_{0}^{2})(vX_{0}^{2}+wY_{0}^{2})
		- Z_{0}^{2}(-2kDZ_{0}^{2k})^{2} \\
	&= sX_{0}^{4k+2} + Y_{0}^{4k-2}(tvX_{0}^{4}+(tw+uv)X_{0}^{2}Y_{0}^{2}+uwY_{0}^{4})
		- 4k^{2}D^{2}Z_{0}^{4k+2} = 0.
\end{align*}

Moreover,
since $h = 0$,
we have
\begin{align*}
	\frac{\partial F}{\partial X}(X_{0}, Y_{0}, Z_{0})
	&= (4k+2)sX_{0}^{4k+1} + Y_{0}^{4k-2}(4tvX_{0}^{3} + 2(tw+uv)X_{0}Y_{0}^{2}) \\
	&\quad + Z_{0}^{2} \cdot (-2kDZ_{0}^{2k}) \cdot (2kBY_{0}^{2k-1} + kCX_{0}^{k-1}Y_{0}^{k}) \\
	&= (4k+2)sX_{0}^{4k+1} + X_{0}Y_{0}^{4k-2}(4tvX_{0}^{2} + 2(tw+uv)Y_{0}^{2}) \\
	&\quad -2k^{2}DY_{0}^{k}Z_{0}^{2k+2} (2BY_{0}^{k-1} + CX_{0}^{k-1})
	= 0.
\end{align*}
Similarly,
we also have
\begin{align*}
	\frac{\partial F}{\partial Y}(X_{0}, Y_{0}, Z_{0})
	&= (4k-2)Y_{0}^{4k-3}(tvX_{0}^{4}+(tw+uv)X_{0}^{2}Y_{0}^{2}+uwY_{0}^{4}) \\
	&\quad +  Y_{0}^{4k-2}(2(tw+uv)X_{0}^{2}Y_{0} + 4uwY_{0}^{3}) \\
	&\quad + Z_{0}^{2} \cdot (-2kDZ_{0}^{2k}) \cdot (2kAX_{0}^{2k-1} + kCX_{0}^{k}Y_{0}^{k-1}) \\
	&= Y_{0}^{4k-3} ( (4k-2)tvX_{0}^{4} + 4k(tw+uv)X_{0}^{2}Y_{0}^{2} + (4k+2)uwY_{0}^{4} ) \\
	&\quad - 2k^{2}DX_{0}^{k}Z_{0}^{2k+2}(2AX_{0}^{k-1} + CY_{0}^{k-1})
		= 0.
\end{align*}

Now, by taking into account of the comdition $t \equiv w \equiv 0 \bmod{l}$,
we have
\[
\begin{cases}
	h = 0, \ \text{i.e.,} \ AX_{0}^{2k}+BY_{0}^{2k}+CX_{0}^{k}Y_{0}^{k}+(2k+1)DZ_{0}^{2k} = 0 \\
	F(X_{0}, Y_{0}, Z_{0})
	\equiv sX_{0}^{4k+2} + uvX_{0}^{2}Y_{0}^{4k} - 4k^{2}D^{2k}Z_{0}^{4k+2} \equiv 0 \bmod{\lambda} \\
	\frac{\partial F}{\partial X}(X_{0}, Y_{0}, Z_{0})
	\equiv (4k+2)sX_{0}^{4k+1} + 2uvX_{0}Y_{0}^{4k}
		- 2k^{2}DY_{0}^{k}Z_{0}^{2k+2}(2BY_{0}^{k-1} + CX_{0}^{k-1}) \equiv 0 \bmod{\lambda} \\
	\frac{\partial F}{\partial Y}(X_{0}, Y_{0}, Z_{0})
	\equiv 4kuvX_{0}^{2}Y_{0}^{4k-1} - 2k^{2}DX_{0}^{k}Z_{0}^{2k+2}(2AX_{0}^{k-1}+CY_{0}^{k-1}) \equiv 0 \bmod{\lambda}.
\end{cases}
\]
In particular,
from the assumption $l \nmid 2ksuvBD$,
we see that $X_{0}Z_{0} \not\equiv 0 \bmod{\lambda}$.
Moreover, by combining the congruences for $\partial F/\partial X$ and $\partial F/\partial Y$,
we can delete $Z_{0}$ as follows.
\begin{align*}
	&\quad (2AX_{0}^{k-1}+CY_{0}^{k-1})X_{0}^{k}\frac{\partial F}{\partial X}(X_{0}, Y_{0}, Z_{0})
		- (2BY_{0}^{k-1} + CX_{0}^{k-1})Y_{0}^{k}\frac{\partial F}{\partial Y}(X_{0}, Y_{0}, Z_{0}) \\
	&\equiv (4k+2)s(2AX_{0}^{k-1}+CY_{0}^{k-1})X_{0}^{5k+1}
		+ 2uv(2AX_{0}^{k-1}+CY_{0}^{k-1})X_{0}^{k+1}Y_{0}^{4k} \\
	&\quad - 4kuv(2BY_{0}^{k-1}+CX_{0}^{k-1})X_{0}^{2}Y_{0}^{5k-1} \\
	&\equiv (4k+2)s(2AX_{0}^{k-1}+CY_{0}^{k-1})X_{0}^{5k+1} \\
	&\quad + 2uvX_{0}^{2}Y_{0}^{4k}\left(
		2AX_{0}^{2k-2} - (2k-1)CX_{0}^{k-1}Y_{0}^{k-1} -4kBY_{0}^{2k-2}
			\right) \\
	&\equiv 0 \bmod{\lambda},
\end{align*}
hence
\begin{align*}
	&\quad (2k+1)s(2AX_{0}^{k-1}+CY_{0}^{k-1})X_{0}^{5k-1} \\
	&\quad	+ uvY_{0}^{4k}\left(
		2AX_{0}^{2k-2} - (2k-1)CX_{0}^{k-1}Y_{0}^{k-1} -4kBY_{0}^{2k-2}
			\right)
		\equiv 0 \bmod{\lambda}.
\end{align*}

Now, suppose that $k = 1$.
Then, the above congruence becomes
\[
	3s(2A+C)X_{0}^{4} + (2A-4B-C)uvY_{0}^{4} \equiv 0 \bmod{\lambda}.
\]
Thus, if $l \nmid 3(2A+C)$ and $l \mid 2A-4B-C$
as we assumed,
then we must have $X_{0} \equiv 0 \bmod{\lambda}$, a contradiction.
\footnote{
	In this step, we used the following assumptions:
	$k = 1$,
	$l \nmid 6ksuv(2A+C)BD$,
	and $l \mid 2A-4B-C$ for some common prime divisor $l$ of $t$ and $w$.
	}
\end{enumerate}
This completes the proof (at least for $k = 1$).
\end{proof}

\section*{Acknowledgements}

The author would like to thank Yoshinori Kanamura and Yosuke Shimizu
for fruitful discussions on the local-global principle.
In fact,
a question from Kanamura is the motivation of this research
and the previous joint work with Shimizu forms the foundation of this research.
This article cannot realize without valuable discussions with them.
The author would like to thank also
Ken-ichi Bannai, Hiroyasu Izeki, Yoshinori Kanamura, Masato Kurihara, Takaaki Tanaka,
and Yusuke Tanuma for careful reading of the draft of this article and many valuable comments.
Finally, the author would express his sincere gratitude to anonymous referee
for her/his careful reading of our manuscript and many valuable suggestions.

\begin{bibdiv}
\begin{biblist}
\bibselect{thesis}
\end{biblist}
\end{bibdiv}

\end{document}